\documentclass[11pt, reqno]{amsart}
\usepackage[utf8]{inputenc}
\usepackage[english]{babel}
\usepackage{amsthm}
\usepackage{amsmath}
\usepackage{amsfonts}
\usepackage{amssymb}
\usepackage{hyperref}
\makeatletter
\newcommand{\maybeusepackage}[1]{\IfFileExists{#1.sty}{\usepackage{#1}}{}}
\makeatother
\maybeusepackage{mathrsfs}
\maybeusepackage{mathtools}
\maybeusepackage{xcolor}
\maybeusepackage{csquotes}
\theoremstyle{plain}
\newtheorem{theorem}{Theorem}[section]
\newtheorem{lemma}[theorem]{Lemma}
\newtheorem{corollary}[theorem]{Corollary}
\newtheorem{proposition}[theorem]{Proposition}
\theoremstyle{definition}
\newtheorem{definition}[theorem]{Definition}

\newtheorem{remark}[theorem]{Remark}

\theoremstyle{plain}
\newtheorem*{theoremA}{Theorem A}
\newtheorem*{theoremB}{Theorem B}
\newtheorem*{theoremC}{Theorem C}
\numberwithin{equation}{section}

\newcommand{\R}{\mathbb{R}}

\newcommand{\id}{\mathrm{id}}
\newcommand{\Ad}{\mathrm{Ad}}
\newcommand{\eexp}{\overrightarrow{\exp}}

\newcommand{\rank}{\mathrm{rank}}
\newcommand{\Span}{\mathrm{span}}

\newcommand{\im}{\mathrm{Im}}

\newcommand{\vect}{\mathrm{Vec}}
\newcommand{\lie}{\mathrm{Lie}}
\newcommand{\be}{\begin{equation}}
\newcommand{\ee}{\end{equation}}
\newcommand{\Len}{\mathsf{L}}
\newcommand{\bv}{\mathbf v}
\newcommand{\bV}{\mathbf V}
\newcommand{\bw}{\mathbf w}
\title{Strong Legendre regularity and short-time analysis of abnormal curves}
\author[F.~Boarotto]{Francesco Boarotto}
\email{francesco.boarotto@gmail.com}

\begin{document}
\begin{abstract}
We study strictly abnormal sub-Riemannian extremals satisfying the Goh condition and
a strong (i.e.\ coercive) generalized Legendre condition, imposed only at the
Lebesgue points of the reference control. We prove that such a control is smooth, that
the projected extended Hessian of the end-point map is uniformly coercive on the
admissible set of variations for short times, and that the abnormal covector gives a
quantitative separation of the end-point of the reference arc from the image of the
admissible set under the nonlinear end-point map. As a consequence the reference arc is
rigid and, for short times, it is a length minimizer: every admissible curve with the
same end-points and no greater length is a reparametrization of the reference arc.
In particular, among controls of constant speed the reference control is the unique
length minimizer. All the
estimates are uniform on the admissible set, and no assumption is made on the dimension
of the manifold, on the growth vector of the distribution, or on the corank of the
differential of the end-point map.
\end{abstract}
\maketitle

\section{Introduction}\label{sec:intro}

\subsection{General framework}
One of the central objects of sub-Riemannian theory is the end-point map
\cite{ASBook, ABB20, Mont02}. Our current understanding of this map is limited and
barely goes beyond its second-order expansion \cite{BMS24, Joz26, BMP20}, one of the main
issues being the lack of a solid theory of high-order open mapping theorems.
Nonetheless, the iterated integrals appearing in the asymptotic expansion of the
end-point map encode, in principle, all the geometric information one needs, and their
understanding is probably necessary for any significant advance in the theory.

At the second-order level the picture is dramatically different, and the end-point map is
nowadays very well understood. Starting from the seminal work of Agrachev \cite{AQ88}, an
intrinsic theory has been developed, in which the Hessian of the end-point map is the
crucial tool to study candidate length-minimizing curves whose first differential is not
surjective, that is, abnormal curves. Within this framework, the Goh condition
\cite{Goh88} and the generalized Legendre condition play a crucial role in establishing
necessary conditions that any candidate optimal control must satisfy. The generalized
Legendre condition is a condition on the non-negativity of a family of quadratic forms.

In this paper we address the case in which these quadratic forms are essentially
coercive, meaning that they are essentially uniformly bounded away from zero. We prove
that in this case the reference arc is, for short times, a length minimizer, and that
every competitor with the same end-points and no greater length is a reparametrization
of it. In particular, the reference control is locally optimal with respect to the
$L^2$-topology on the space of controls (equivalently, the $W^{1,2}$-topology on the
space of trajectories), and strictly so up to reparametrizations. This line of research extends the results of \cite{AS96,
AS95}, where local optimality was obtained either with respect to the
$L^\infty$-topology on the space of controls, or restricted to rank-two distributions in
the $L^2$ case. Even the form in which coercivity is assumed throughout this paper is
slightly weaker than the one used in the two references above, since we only require the
inequality at the Lebesgue points of the reference control.

\subsection{Setting of the problem}
A sub-Riemannian manifold is a triplet $(M,\Delta,g)$, where $M$ is a smooth (i.e.,
$C^\infty$) manifold of dimension $\dim(M)=n$, $\Delta\subset TM$ is a distribution of
constant rank $2\le d<n$, and $g$ is a metric on $\Delta$. In a neighborhood $U\subset M$
of any point $q\in M$ there exist vector fields $X_1,\dots,X_d\in\vect(U)$ such that
$\Delta=\Span\{X_1,\dots,X_d\}$ on $U$. Our considerations being local, we assume $U=M$
throughout the paper. We also assume that $\Delta$ satisfies the H\"ormander condition
\begin{equation}\label{eq:Hormander_condition}
	\lie\left\{X_1,\dots,X_d\right\}(p)=T_pM, \qquad p\in M,
\end{equation}
i.e., that $\Delta$ is non-holonomic. An admissible curve is a curve
$\gamma\in W^{1,2}([0,1],M)$ whose derivative $\dot\gamma$ belongs to $\Delta_\gamma$
a.e.\ on $[0,1]$, meaning that
\begin{equation}\label{eq:horizontal_curve}
	\dot\gamma(t)=\sum_{i=1}^d u_i(t)X_i(\gamma(t)),
	\qquad\text{for a.e.\ }t\in[0,1],
\end{equation}
for a unique $u\in L^2([0,1],\R^d)$. For brevity we adopt the short-hand notation
$X_w:=\sum_{i=1}^dw_iX_i$ for $w\in\R^d$. If the frame $X_1,\dots,X_d$ is orthonormal
with respect to $g$, the length of an admissible curve $\gamma$ is the $L^1$-norm of its
associated control; it is invariant under reparametrization.

We study admissible curves stemming from $q$, i.e., solutions to
\eqref{eq:horizontal_curve} with $\gamma(0)=q$. The time-$t$ end-point map
$F_q^t:L^2([0,1],\R^d)\to M$ is defined by $F_q^t(u)=\gamma_u(t)$. A control
$u\in L^2([0,1],\R^d)$ is called singular if $d_uF^1_q$ is not surjective.

It is customary to study minima of the length through the energy functional, and to
introduce the extended end-point map
\begin{equation}\label{eq:extended_endpoint}
	F_J:L^2([0,1],\R^d)\to M\times\R,
	\qquad
	F_J(u)=\left(F^1_q(u),J(u)\right),
	\qquad
	J(u):=\frac12\|u\|^2_{L^2([0,1],\R^d)}.
\end{equation}
If $\lambda_0=0$ for every $(\lambda,\lambda_0)\in\im(d_uF_J)^\perp$, the singular
control $u$ is called strictly singular. The main difficulties in sub-Riemannian
geometry arise from strictly singular controls which, as discovered in the
ground-breaking work of Montgomery \cite{Mon94}, can indeed be length minimizers. Since
$\lambda_0=0$ for every covector orthogonal to the image of $d_uF_J$, the differential of
the energy plays no role in the linearized constrained problem, and the analytic question
becomes the openness of the end-point map $F^1_q$ restricted to the set of curves whose
control $\tilde u$ satisfies the energy constraint $J(\tilde u)\le J(u)$. The covector
$\lambda$ is orthogonal to $\im d_uF_q^1$, and this is precisely why the second-order
analysis becomes essential.

Two fundamental geometric conditions arise from this analysis. The Goh condition
states that a strictly singular length minimizer $\gamma$ admits an adjoint curve of
covectors $[0,1]\ni t\mapsto\lambda(t)\in T^*_{\gamma(t)}M$ such that
\begin{equation}\label{eq:Goh_intro}
	\left\langle\lambda(t),\left[X_i,X_j\right](\gamma(t))\right\rangle=0,
	\qquad 0\le t\le1,\quad i,j=1,\dots,d;
\end{equation}
we may therefore think of the Goh condition as the second-order counterpart of the
first-order conditions deriving from the Pontryagin Maximum Principle (PMP, see
\cite{PMP}), namely $\langle\lambda(t),X_i(\gamma(t))\rangle=0$ for every $t\in[0,1]$ and
every $i=1,\dots,d$. The companion necessary condition of second order is the
generalized Legendre condition
\begin{equation}\label{eq:legendre_intro}
	Q^t(w)=\left\langle\lambda(t),
	\left[\left[X_{u(t)},X_w\right],X_w\right](\gamma(t))\right\rangle\ge0,
	\qquad\text{for a.e. }t\in[0,1],\quad w\in u(t)^\perp,
\end{equation}
which holds for the same adjoint curve for which \eqref{eq:Goh_intro} is true. Two
comments are in order. First, the bilinear form
\[
	b^t(v,w):=\left\langle\lambda(t),
	\left[[X_{u(t)},X_v],X_w\right](\gamma(t))\right\rangle
\]
is symmetric because of \eqref{eq:Goh_intro}, so that \eqref{eq:legendre_intro} is a
sign-definiteness condition on the diagonal $Q^t(w)=b^t(w,w)$. Second, since $Q^t$
contains $u(t)$ in its expression, it is defined only when $t$ is a Lebesgue point of
$u$, hence a.e.\ on $[0,1]$.

\subsection{Main results}\label{ssec:mainresults}
Throughout the paper $u$ is a strictly singular control, parametrized by arclength, along
which the Goh condition \eqref{eq:Goh_intro} holds for a fixed nonzero adjoint curve
$t\mapsto\lambda(t)$, and along which the Legendre form is coercive: there is
$c>0$ with
\begin{equation}\label{eq:strongLeg_intro}
	Q^t(w)\ge c|w|^2
	\qquad\text{for every }w\in u(t)^\perp
	\text{ and every Lebesgue point }t\text{ of }u.
\end{equation}
Condition \eqref{eq:strongLeg_intro} is the strong generalized Legendre condition;
the constant $c$ depends on the normalization of $\lambda$, which is fixed once and for
all. We write $q(t):=F^t_q(u)$ for the reference trajectory. The results of the paper are the
following three theorems.

\begin{theoremA}[Regularity; Proposition~\ref{prop:ucont} and
Proposition~\ref{prop:smooth} in Section~\ref{sec:regularity}]
	Let $d\ge2$ and assume \eqref{eq:Goh_intro} and \eqref{eq:strongLeg_intro}. Then $u$
	admits a continuous representative, and this representative is smooth on $[0,1]$.
	Moreover, for every $t\in[0,1]$ the control value $u(t)$ is the unique unit vector
	$w$ in the kernel of the differentiated Goh operator $B(z(t))$ of
	\eqref{eq:Bmap} such that $T_{z(t)}(w,y,y)>0$ for every non-zero $y\perp w$ (see
	\eqref{eq:trilinear}), and it depends smoothly on the state-costate pair
	$z(t)=(q(t),\lambda(t))$.
\end{theoremA}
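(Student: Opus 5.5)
We will differentiate the Goh condition in time and read off $u(t)$ as the unique solution of an algebraic problem on the state–costate pair $z(t)=(q(t),\lambda(t))$. The covector solves the Hamiltonian system $\dot\lambda=\vec h_{u(t)}(\lambda)$, with $h_i(\lambda)=\langle\lambda,X_i\rangle$. Therefore the function $t\mapsto h_{ij}(\lambda(t)):=\langle\lambda(t),[X_i,X_j](q(t))\rangle$ is absolutely continuous, with
\[
\frac{d}{dt}h_{ij}(\lambda(t))=\sum_k u_k(t)\,\langle\lambda(t),[X_k,[X_i,X_j]](q(t))\rangle .
\]
By \eqref{eq:Goh_intro} this derivative vanishes a.e. Fix $z$ and define the linear map $B(z):\R^d\to\Lambda^2(\R^d)^*$ by $B(z)(w)_{ij}=\langle\lambda,[X_w,[X_i,X_j]](q)\rangle$ (this is the operator of \eqref{eq:Bmap}). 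Then $u(t)\in\ker B(z(t))$ for a.e. $t$. Define also the trilinear form $T_z(w,v,y)=\langle\lambda,[[X_w,X_v],X_y](q)\rangle$. Using the Jacobi identity and $B(z)w=0$, $T_z(w,\cdot,\cdot)$ is symmetric, and by \eqref{eq:strongLeg_intro} it is coercive with constant $c$ on $u(t)^\perp$ at Lebesgue points.

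\textbf{Step 1: uniqueness.} Let $t$ be a Lebesgue point. Suppose $w$ is a unit vector in $\ker B(z(t))$ with $T_{z(t)}(w,y,y)>0$ for all nonzero $y\perp w$; we must show $w=u(t)$. Both $T_z(u,\cdot,\cdot)$ and $T_z(w,\cdot,\cdot)$ are symmetric. Antisymmetry of the bracket gives $T_z(a,b,b)=-T_z(b,a,b)$, and the kernel relation gives $T_z(a,b,y)=T_z(a,y,b)$ whenever $a\in\ker B$. Combining these:
\begin{itemize}
\item if $w\ne\pm u$, then $T(u,w,w)=-T(w,u,w)=-T(w,w,u)=0$ after writing $w=\alpha u+\beta y$ with $y\perp u$;
\item by the same manipulations, $T(u,y,y)$ and $T(w,u,u)$ must have opposite signs.
\end{itemize}
Both are required to be positive, a contradiction. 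The case $w=-u$ is excluded because $T(-u,y,y)=-T(u,y,y)<0$. Carrying out this sign bookkeeping cleanly is the first delicate point.

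\textbf{Step 2: continuity.} Lebesgue points are dense and $z$ is continuous. Take Lebesgue points $t_k\to t$. Any limit point $w$ of $u(t_k)$ is a unit vector in $\ker B(z(t))$. By continuity of $T$, it satisfies $T_{z(t)}(w,y,y)\ge c|y|^2$ for $y\perp w$, where the coercivity survives in the limit because the constant $c$ is uniform. The characterization is pointwise in $z$, so the uniqueness argument of Step 1 applies at every $t$, not only at Lebesgue points. Hence the limit is unique, and $u$ has a continuous representative satisfying the characterization at every $t\in[0,1]$.

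\textbf{Step 3: smoothness.} Consider the smooth map $\Phi(z,w)=(B(z)w,\ |w|^2-1)$. This is the main obstacle: $B(z)$ maps into a space of dimension $d(d-1)/2$, so we cannot simply apply the implicit function theorem to $\Phi=0$. Instead I would use a reduced equation. Restricting $B(z)w$ to the pairings $\langle B(z)w, e_i\wedge w\rangle$ gives $d-1$ equations in the unknowns $w$ on the sphere, namely
\[
G(z,w)_i=T_z(e_i,w,w)\ \text{restricted to } e_i\perp w .
\]
The differential of $G$ in $w$ along $y\perp w$ is, up to sign, $2T_z(w,y,\cdot)|_{w^\perp}$, which is invertible by coercivity. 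The implicit function theorem then yields a smooth local solution $w=\omega(z)$ with $\omega(z(t))=u(t)$. By the uniqueness of Step 1 (applied to the continuous $u$), $u(t)=\omega(z(t))$ near each $t$.

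Then $z$ solves the ODE $\dot z=\vec h_{\omega(z)}(z)$, which has a smooth right-hand side, so $z$ is $C^1$. Hence $u=\omega\circ z$ is $C^1$, and bootstrapping gives $C^\infty$ regularity on $[0,1]$.
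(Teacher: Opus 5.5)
Your route is the paper's. You derive $B(z(t))u(t)=0$ from the differentiated Goh condition, get uniqueness in $\ker B$ from coercivity (Lemmas~\ref{lem:rank} and~\ref{lem:sign}), obtain continuity by passing to limits along Lebesgue points (Proposition~\ref{prop:ucont}), and finish with an implicit function theorem and a bootstrap (Proposition~\ref{prop:smooth}). Your reduced equation is also the paper's: $G_i(z,w)=T_z(e_i,w,w)=-T_z(w,e_i,w)=-\big(R(q,\lambda,w)w\big)_i$. Since $\sum_iw_iG_i=T_z(w,w,w)=0$, it is a tangent field on $S^{d-1}$. You handle the sphere by a chart, where the paper uses the multiplier $\mu$ and the bordered matrix $\mathcal M$. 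However, two computations are wrong as written, and the second one hides the only non-formal step.

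\textbf{Step~1.} The second bullet is false. Since $u\in\ker B(z)$, $T_z(w,u,u)=-T_z(u,w,u)=-T_z(u,u,w)=0$ for every $w$. Its positivity is not required anyway, because $u\not\perp w$ in general. The contradiction comes from the first bullet alone. $T_z(u,\cdot,\cdot)$ is symmetric and $T_z(u,u,\cdot)=0$, so for $w=\alpha u+\beta y$ with $y\perp u$, $|y|=1$, you get $0=T_z(u,w,w)=\beta^2T_z(u,y,y)\ge c\beta^2$. Hence $\beta=0$, and $w=-u$ is excluded by the sign.

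\textbf{Step~3.} The linearization is not $2T_z(u,y,\cdot)$. With $T=T_{z(t)}$, at $(z(t),u(t))$ one has $D_wG_i\,y=T(e_i,y,u)+T(e_i,u,y)$. These two terms are not equal, because $T(e_i,\cdot,\cdot)$ is symmetric only when $e_i\in\ker B$.
\begin{itemize}
\item The first term equals $\langle\lambda(t),[X_{u(t)},[X_y,X_i]](q(t))\rangle$, and it vanishes precisely because $B(z(t))u(t)=0$.
\item The second term is $-\big(R(q(t),\lambda(t),u(t))y\big)_i$.
\end{itemize}
Thus $D_wG=-R(q(t),\lambda(t),u(t))$. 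This maps $u(t)^\perp$ to itself, since $R$ is symmetric with $Ru=0$, and it is negative definite there by coercivity. Using the differentiated Goh condition to kill the cross term is exactly \eqref{eq:jacPsi}, and it is indispensable. Your factor $2$ cannot be right: if the two terms were equal, the vanishing of the first would force $R=0$, contradicting coercivity.

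\textbf{Identification with $\omega$.} The identity $u(s)=\omega(z(s))$ near $t_0$ should come from the local uniqueness in the implicit function theorem, together with $G(z(s),u(s))=0$ and continuity of $u$. It does not follow from Step~1, because $\omega(z)$ is only known to solve the reduced equation, whose zeros on the sphere may form a larger set than $\ker B(z)\cap S^{d-1}$ when $d\ge3$.

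With these repairs the argument is complete. A pointwise implicit function theorem at each $t_0$ suffices for this statement; the uniform radii of Lemma~\ref{lem:quantitative-IFT} are not needed.
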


\begin{theoremB}[Coercivity and separation; Theorem~\ref{thm:perturbedcoercivity} and
Theorem~\ref{thm:separation} in Sections~\ref{sec:hessian}--\ref{sec:separation}]
	Under the same hypotheses there exist $t_*\in(0,1]$ and constants $\eta>0$,
	$C_4>0$, all independent of $t$, such that for every $t\in(0,t_*]$ and every
	variation $\bv\in L^2([0,t],\R^d)$ satisfying the admissibility constraint
	\eqref{eq:conepointwise},
	\[
		\left\langle\ell_t,G^t_{q(t)}(\bv)\right\rangle
		\ \ge\ \eta\left\|\bV\right\|_{\mathcal H}^2
		-C_4\left|G^t_{q(t)}(\bv)\right|^2 ,
	\]
	where $G^t_{q(t)}$ is the end-point map read in the adapted chart of
	Lemma~\ref{lem:adapted}, $\ell_t$ is the abnormal covector in that chart,
	$\bV$ is the primitive of $\bv$, and $\|\cdot\|_{\mathcal H}$ is the norm
	\eqref{eq:Hnorm}.
\end{theoremB}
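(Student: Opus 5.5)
We are asked about Theorem B, the uniform coercivity and separation estimate. The plan is to expand $\langle\ell_t,G^t_{q(t)}(\bv)\rangle$ in the adapted chart of Lemma~\ref{lem:adapted}. In that chart the abnormal covector annihilates the image of the first differential. This gives
\[
\langle\ell_t,G^t_{q(t)}(\bv)\rangle=\tfrac12\langle\ell_t,D^2G^t_{q(t)}(\bv,\bv)\rangle+R_3(\bv),
\]
where $R_3$ collects the cubic and higher iterated integrals. I would then carry out four steps.

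\textbf{Step 1: Goh transformation of the Hessian.} Integrating by parts in time with $\bV(s)=\int_0^s\bv$, the Goh condition \eqref{eq:Goh_intro} kills the term $\int\langle\lambda,[X_{\bv},X_{\bV}]\rangle$. The Hessian then becomes
\[
\int_0^t b^s(\bV,\bV)\,ds+\langle\text{boundary term in }\bV(t)\rangle+\iint(\text{smooth kernel})(\bV,\bV).
\]
By Theorem~A, $u$ is smooth, so $b^s$ is smooth in $s$. This is what allows the integration by parts and the bound of the kernel by a smooth function uniformly in $s$.

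\textbf{Step 2: coercivity of the Legendre part.} Decompose $\bV=\alpha u+\bV^\perp$ pointwise. The admissibility constraint \eqref{eq:conepointwise} (energy not increased) forces the component along $u$ to be of higher order, controlled by $\|\bV^\perp\|^2$ and $|G|$. Then \eqref{eq:strongLeg_intro} gives $\int b^s(\bV^\perp,\bV^\perp)\ge c\|\bV^\perp\|^2_{L^2}$. The $\mathcal H$-norm \eqref{eq:Hnorm} presumably combines $\|\bV\|_{L^2}$ with $|\bV(t)|$ or similar. The boundary term is handled by reading $\bV(t)$ through the linear part of $G$, which is where $-C_4|G|^2$ enters. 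On $[0,t]$ the smooth-kernel double integral is $O(t)\|\bV\|^2_{L^2}$, so it can be absorbed once $t\le t_*$. This yields the projected Hessian coercivity with constant $\eta_0>0$ independent of $t$.

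\textbf{Step 3: remainders.} The cubic and higher terms must be bounded by $o(1)\|\bV\|^2_{\mathcal H}+C|G|^2$ uniformly for $t\le t_*$. This is the main obstacle, because $\bv$ is only $L^2$ and not small, so a naive Taylor bound fails. I would first rewrite the chronological series so that every term involves $\bV$ rather than $\bv$, repeatedly integrating by parts as in Step 1. Each iterated integral of order $k\ge3$ over a simplex of size $t$ then gains a factor $t^{(k-2)/2}$ times $\|\bV\|^{k}$-type quantities. For the part of the series not already small, I would use the constraint $\|u+\bv\|\le\|u\|$, which bounds $\|\bv\|_{L^2}\lesssim 1$. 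Components of $G$ transversal to $\ell_t$ are then used to dominate the remaining non-small directions, again producing $|G|^2$. If the series argument fails to close, the fallback is to apply the variation-of-constants formula to the full nonlinear flow and estimate the remainder via Gronwall.

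\textbf{Step 4: conclusion.} Collecting Steps 2–3 gives the inequality with $\eta=\eta_0/2$ after shrinking $t_*$. The constants are uniform because all bounds depend only on $C^k$ norms of $X_i$ and $u$ on $[0,1]$, together with $c$ from \eqref{eq:strongLeg_intro}.
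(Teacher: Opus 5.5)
\textbf{The missing idea.} You use the admissibility constraint only to get $\|\bv\|_{L^2}\lesssim1$. The whole argument rests instead on its non-homogeneous consequence. Expanding \eqref{eq:conepointwise} gives $|\bv(\tau)|^2\le-2v_1(\tau)$, hence $\|\bv\|_{L^2}^2\le2|V_1(t)|\le2\|\bV\|_{\mathcal H}$ (Lemma~\ref{lem:conenorm}): the square of the strong norm is dominated by the first power of the weak one. Write $N:=\|\bV\|_{\mathcal H}$.

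\emph{Boundary term.} Reading $\bV(t)$ through the linear part of $G^t_{q(t)}$ requires bounding the defect $G^t_{q(t)}(\bv)-d_0G^t_{q(t)}(\bv)$, which is only $\le C\|\bv\|_{L^1}^2\le Ct\|\bv\|_{L^2}^2$. With $\|\bv\|_{L^2}\lesssim1$ this is an additive error of order $t$, not proportional to $N$, and it cannot be absorbed when $N$ is small. With the inequality it becomes $C_Q\,t\,N$ (Lemma~\ref{lem:defect}), which is exactly what yields $N\lesssim\|V\|_{L^2}+|G^t_{q(t)}(\bv)|$ (Propositions~\ref{prop:perturbedKer} and~\ref{prop:perturbednorm}).

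\emph{A term you never mention.} Outside $\ker d_0G^t_{q(t)}$, the chart Hessian $\langle\ell_t,D^2G^t_{q(t)}(\bv,\bv)\rangle$ contains, besides the bracket part you Goh-transform, the chart-dependent term $S^t(\bv)$ of \eqref{eq:Sterm}. It is of size $N^2$ with a constant not small in $t$, so the Legendre term cannot absorb it. It is harmless only because $W_{\bv}(q(t))=d_0G^t_{q(t)}(\bv)$, giving $|S^t(\bv)|\lesssim N\,(|G^t_{q(t)}(\bv)|+C_QtN)$, again through the same inequality.

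\emph{Step 2.} You also reverse the role of the constraint. It gives a lower bound $|v_1|\ge\frac12|\bv|^2$ on the longitudinal component, not an upper bound. Nothing makes $V_1$ of higher order. Instead, $V_1$ drops out of the reduced Hessian exactly, by the adapted frame ($\dot X_1^\tau=0$) and \eqref{eq:cruc_vanishing}. In the norm it is controlled only linearly, through the coframe identity \eqref{eq:Vi-perturbedker} and the bound $\|V_1\|_{L^2}\le|V_1(t)|$, which comes from $v_1\le0$.

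\textbf{Step 3 would not close.} You cannot integrate by parts until every factor is a primitive and obtain bounds of type $t^{(k-2)/2}\|\bV\|^k$. On the simplex, each integration by parts leaves diagonal terms such as $V_k(\tau_2)v_j(\tau_2)$. Their antisymmetric part is an area: it is not a derivative, and it is not bounded by any multiple of the squared weak norm. For $v_2+iv_3=ae^{i\tau/\epsilon}$ the area is about $a^2\epsilon t/2$, while $|V(t)|+\|V\|_{L^2}=O(a\epsilon)$. At second order the Goh condition kills this term; at third order nothing does.

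The paper integrates by parts exactly once, in the innermost variable $\tau_3$ of \eqref{eq:remainderintegral}. This gives $|R^t(\bv)|\lesssim\sqrt t\,\|\bV\|_{L^2}\|\bv\|_{L^2}^2$, and then $\|\bv\|_{L^2}^2\le2N$ yields $C_R\sqrt t\,N^2$. With only $\|\bv\|_{L^2}\lesssim1$, the same estimate is $O(\sqrt t\,N)$, linear in $N$. It cannot be dominated by $\eta N^2$ uniformly on the admissible set, which contains variations with arbitrarily small $N$. Your fallbacks (transversal components of $G$, Gronwall) do not supply the missing power. Finally, no series is needed: the expansion \eqref{eq:Volterra_expansion} truncated at second order, with its exact integral remainder, suffices.
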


\begin{theoremC}[Rigidity and short-time minimality; Corollary~\ref{cor:rigidity} and
Theorem~\ref{thm:minimality} in Section~\ref{sec:separation}]
	Under the same hypotheses there exists $t_*\in(0,1]$ such that, for every
	$t\in(0,t_*]$, the arc $\gamma_u|_{[0,t]}$ is rigid and minimizes the length among all
	admissible curves joining $q$ to $q(t)$: if $\gamma_{\tilde u}$ is an admissible curve
	joining $q$ to $q(t)$ with $\ell(\gamma_{\tilde u})\le t$, then $\ell(\gamma_{\tilde u})=t$
	and $\gamma_{\tilde u}$ coincides with $\gamma_u|_{[0,t]}$ up to reparametrization. In
	particular, $u|_{[0,t]}$ is the unique length-minimizing control of constant speed
	joining $q$ to $q(t)$.
\end{theoremC}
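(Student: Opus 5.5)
Theorem C will be deduced from Theorem B together with the first-order structure of the reference arc. Fix $t\in(0,t_*]$ and an admissible curve $\gamma_{\tilde u}$ from $q$ to $q(t)$ with $\ell(\gamma_{\tilde u})\le t$. After reparametrizing $\gamma_{\tilde u}$ to constant speed on $[0,t]$, we have $|\tilde u|\le 1$ a.e. (with equality if the length is exactly $t$), so $J(\tilde u)\le J(u|_{[0,t]})$. We then write $\tilde u=u+\bv$ with $\bv\in L^2([0,t],\R^d)$. The pointwise bound $|u+\bv|^2\le1=|u|^2$ gives $2\langle u,\bv\rangle+|\bv|^2\le0$, which is exactly the admissibility constraint \eqref{eq:conepointwise} (I expect it is formulated in this form, or as its linearization $\langle u,\bv\rangle\le -\tfrac12|\bv|^2$). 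Since $\tilde u$ reaches $q(t)$, in the adapted chart of Lemma~\ref{lem:adapted}, centered at $q(t)$, we get $G^t_{q(t)}(\bv)=0$.

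Theorem B now gives
\[
0=\langle\ell_t,G^t_{q(t)}(\bv)\rangle\ \ge\ \eta\|\bV\|_{\mathcal H}^2-C_4\cdot 0,
\]
so $\bV=0$ in $\mathcal H$. Here I have to check that $\|\cdot\|_{\mathcal H}$ from \eqref{eq:Hnorm} is a genuine norm on primitives, so that $\bV\equiv0$ and hence $\bv=0$ a.e. If instead the norm only controls the component of $\bV$ orthogonal to $u$, I will recover the longitudinal component from the constraint: $\bV^\perp=0$ means that $\bv$ is pointwise parallel to $u$, so $\tilde u=(1+s)u$ with $|1+s|\le1$. The curve $\gamma_{\tilde u}$ is then a monotone time change of $\gamma_u$, and it reaches $q(t)$ at time $t$. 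For small $t$, $\gamma_u|_{[0,t]}$ is injective (it is smooth by Theorem A, with $|\dot\gamma_u|=1$). Hence the total time change equals $t$, which forces $s=0$ when $|\tilde u|=1$, and in general identifies $\gamma_{\tilde u}$ as a reparametrization of $\gamma_u|_{[0,t]}$. The length inequality then becomes an equality: $\int_0^t|1+s|\,dt\le t$ together with the time change covering $[0,t]$ forces $|1+s|=1$ a.e., and $1+s>0$, since otherwise the curve would backtrack and need extra length.

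Rigidity (Corollary~\ref{cor:rigidity}) is the same argument without the length constraint replaced: controls with $J(\tilde u)\le J(u)$ that are $L^2$-close (or arbitrary ones, since the estimate is uniform on the admissible set) and reach $q(t)$ must coincide with $u$ up to reparametrization. The final claim about constant-speed controls follows at once. A length minimizer of constant speed reaching $q(t)$ in time $t$ has speed at most $1$, so it satisfies the hypotheses above and is $u|_{[0,t]}$ up to reparametrization. A constant-speed reparametrization of a unit-speed arc on the same interval is the identity, so the control is $u|_{[0,t]}$ itself.

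The main obstacle is the reduction step. We must make sure that every competitor, and not just a nearby one, yields a variation $\bv$ satisfying exactly \eqref{eq:conepointwise} on the same interval $[0,t]$. We must also make sure that $G^t_{q(t)}(\bv)=0$ holds in the adapted chart, which needs the competitor to stay in the chart domain. For the latter I will use that a curve of length at most $t$ stays in a sub-Riemannian ball of radius $t$ around $q$, and this ball lies in the chart for $t\le t_*$ after shrinking $t_*$. The second delicate point is the degeneracy of the $\mathcal H$-norm along the direction $u$, handled as above through injectivity of the reference arc.
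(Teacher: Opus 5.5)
Your proposal is correct and follows the paper's proof. You reduce to a constant-speed competitor via Lemma~\ref{lem:reparam}, so that $\bv=\tilde u_\varphi-u$ satisfies \eqref{eq:conepointwise}. You keep the curve inside $\overline B(q,t)\subset\mathcal O_1$, so that $G^t_{q(t)}(\bv)=0$ makes sense in the adapted chart, and you then read off $\|\bV\|_{\mathcal H}=0$ from the separation estimate. Your fallback for a possibly degenerate norm is unnecessary: $\|\bV\|_{\mathcal H}^2=|\bV(t)|^2+\|\bV\|_{L^2}^2$ involves all $d$ components of $\bV$, so $\bV\equiv0$ and hence $\bv\equiv0$ directly, exactly as in Corollary~\ref{cor:rigidity}.
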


Section~\ref{sec:endpoint} collects the chronological formalism and the second-order
conditions; Section~\ref{sec:regularity} contains the algebraic preliminaries, the proof
of Theorem~A, and the construction of the adapted frame and chart;
Section~\ref{sec:hessian} is devoted to the extension and to the coercivity of the
Hessian; Section~\ref{sec:separation} contains the third-order expansion of the
end-point map, the separation theorem, and the deduction of Theorem~C. The concluding
Subsection~\ref{ssec:strength} discusses the strength and the limitations of the results.

\subsection{Why a single direction, and a single covector, suffice}\label{ssec:onedir}
It is worth explaining at once the architecture of the proof, because it is what makes
the argument short and insensitive to the corank of the end-point map.

An exact competitor is a solution of the vector equation
$G^t_{q(t)}(\bv)=0$ in $\R^n$. In order to exclude a competitor, however, it
suffices to exhibit one scalar consequence of this equation which is violated:
one scalar functional which vanishes on the reference control, which is built out of the
constraint, and which, up to a quadratic error in the end-point displacement, is strictly positive
on every non-zero admissible variation. The corresponding inequality then separates
$q(t)$ from the image of the admissible set under the nonlinear end-point map, and no
competitor can reach $q(t)$. This is the
elementary asymmetry between proving that a point is in the image of a map -- for
which one must solve the full vector equation, and where the dimension of the cokernel
does matter -- and proving that it is not.

The scalar functional we use is the component along the abnormal covector $\lambda(t)$,
that is, the displacement measured along one single codirection. The reason why this
particular direction is available, and the others are not, is precisely the Goh
condition. Condition \eqref{eq:Goh_intro} is an existence statement: it asserts that
at least one abnormal lift annihilates all the brackets $[X_i,X_j]$ along the
curve. It does not assert --- and in general it is false --- that every element of the
annihilator
$\Lambda_t:=\left(\im d_0G^t_{q(t)}\right)^\perp$ enjoys the same property. All the
structural information exploited in this paper is attached to that single covector: the
linear constraint \eqref{eq:linearGoh} which forces the smoothness of $u$, the vanishing
\eqref{eq:cruc_vanishing} which eliminates the first component of the variation from the
Hessian, the reduction \eqref{eq:hessGnoV1}, and the Legendre form \eqref{eq:slg1}. For a
covector $\mu\in\Lambda_t$ not satisfying Goh, the reduced Hessian
$\mu\widehat{H^t}$ of \eqref{eq:extensHessian} is not even well defined.

\subsection{Nice abnormals and the one-dimensionality of $\ker B(z)$}
\label{ssec:nice}
For rank-two distributions the classical hypothesis under which abnormal minimality is
studied is that the abnormal extremal be nice, i.e.\ that
\begin{equation}\label{eq:nice}
	\lambda(t)\in\left(\Delta^2_{q(t)}\right)^\perp\setminus
	\left(\Delta^3_{q(t)}\right)^\perp ,
	\qquad t\in[0,1],
\end{equation}
where $\Delta^1:=\Delta$ and $\Delta^{k+1}:=\Delta^k+[\Delta,\Delta^k]$; see
\cite[Chapter~12]{ABB20} and \cite{AS95}. The first membership in \eqref{eq:nice} is
exactly the Goh condition; the second one is a non-degeneracy requirement at the level of
brackets of length three.

It is instructive to compare \eqref{eq:nice} with the algebraic mechanism used here. The
differentiated Goh condition is encoded by the linear operator
$B(z):\R^d\to\Lambda^2(\R^d)^*$ of \eqref{eq:Bmap}, and the whole regularity theory of
Section~\ref{sec:regularity} rests on the fact that
\begin{equation}\label{eq:onedim}
	\ker B(z(t))=\Span\{u(t)\},
	\qquad t\in[0,1],
\end{equation}
which we derive from the strong Legendre condition (Lemma~\ref{lem:rank}). When
$d=2$ the space $\Lambda^2(\R^2)^*$ is one-dimensional and
$(B(z)a)_{12}=\langle\lambda,[X_a,[X_1,X_2]](q)\rangle$, so that $B(z)\neq0$ if and only
if $\lambda\notin(\Delta^3_q)^\perp$, and in that case $\ker B(z)$ is automatically a
line. Thus in rank two \eqref{eq:onedim} and the niceness condition \eqref{eq:nice}
coincide, and both are implied by strong Legendre (which is stronger than both).

In higher rank the analogy breaks down, and this is worth recording. The target
$\Lambda^2(\R^d)^*$ has dimension $d(d-1)/2$, so \eqref{eq:onedim} is a statement of
maximal rank for $B(z)$, i.e.\ $\rank B(z)=d-1$: it is a codimension-$(d-1)$ condition on
the bracket structure of length three, not the non-vanishing of a single bracket.
Requiring merely $\lambda\notin(\Delta^3_q)^\perp$, i.e.\ $B(z)\ne0$, leaves $\ker B(z)$
of dimension up to $d-1$, and neither the selection of $u(t)$ as a distinguished
direction nor the bordered invertibility of Lemma~\ref{lem:sign} -- hence neither the
regularity of $u$ nor the subsequent analysis -- survives. In other words, the natural
higher-rank substitute for niceness is not a condition on one bracket but the
non-degeneracy \eqref{eq:onedim}, and coercivity of the Legendre form is the most natural
geometric hypothesis producing it. See Remark~\ref{rmk:niceanalogy}.

\subsection{Comparison with the literature}
The present argument belongs to the second-variation tradition for singular and abnormal
extremals, continuing the analysis begun by Agrachev and Sarychev \cite{AS95,AS96}. There
is also a close connection with the no-gap second-order conditions established by
Dmitruk for control-affine problems without control constraints
\cite{Dmitruk2008,Dmitruk2026}. In that setting the original second variation is
automatically degenerate; after a Goh-type transformation it becomes a classical,
possibly non-degenerate quadratic form, whose positivity yields necessary and sufficient
conditions for weak minimality of quadratic order. The common principle is that a
degenerate singular second variation becomes coercive after passing to suitable primitive
variables --- although here we recover coercivity by treating all non-Legendre terms as
perturbative remainders, rather than by solving the associated Jacobi equation and
locating the first conjugate time.

The connection with Jacobi fields \cite{AgrachevBeschastnyi2020,
AgrachevBeschastnyi2022} is conceptually important as well. In the classical theory,
Jacobi fields describe the kernel of the second variation and conjugate times mark the
loss of coercivity. The transport identity of Lemma~\ref{lem:transport} identifies the
principal part of the reduced Hessian with the Legendre form along the reference
extremal; thus, on sufficiently short intervals, the coercivity estimate of
Theorem~\ref{thm:perturbedcoercivity} may be viewed as a quantitative ``no conjugate
point'' statement. Lastly, we point out the recent works \cite{Joz23,Joz26}, where new
second-order necessary conditions are derived through a careful expansion of the
end-point map in an adapted system of coordinates.

\section{End-point map analysis}\label{sec:endpoint}

\subsection{Elements of chronological calculus}
We adopt the formalism of chronological calculus introduced by Agrachev and Gamkrelidze
in \cite{AG78}, and explained in greater detail in \cite[Chapter~3]{ASBook}. The point of
view adopted within this framework consists in identifying points of $M$, vector fields,
and diffeomorphisms of $M$ with operators on $C^\infty(M)$. The symbol $\circ$ denotes
composition of operators.

\begin{definition}
	Let $0<t\le1$. The time-$t$ end-point map relative to the vector fields
	$X_1,\dots,X_d\in\vect(M)$ is the map $F^t_q:L^2([0,1],\R^d)\to M$ given by
	\begin{equation}\label{eq:endp}
		F^t_{q}(u)=q\circ\eexp\int_0^tX_{u(\tau)}\,d\tau .
	\end{equation}
\end{definition}

We assume that the Cauchy problem for $X_{u(t)}$ has a solution defined on the whole
interval $[0,1]$, and we set $q(t):=F^t_q(u)$. Dropping the dependence on the base point,
the map $F^t(\cdot,u):M\to M$, $F^t(q,u):=F^t_q(u)=q(t)$, is a diffeomorphism of $M$ and
an instance of a right-chronological exponential, obeying
\begin{equation}\label{eq:right_chronological_exp}
	\dot F^t(\cdot,u)=F^t(\cdot,u)\circ X_{u(t)} .
\end{equation}
We fix a point $q\in M$ and a reference control $u\in L^2([0,1],\R^d)$, which we assume
from the very beginning to be strictly singular and parametrized by arclength,
\[
	|u(t)|=1,\qquad\text{for a.e.\ }t\in[0,1].
\]

Let $0\le t\le1$ and let a variation $v\in L^2([0,t],\R^d)$ be given. We want to study
$F^t_q(u+v)$ as a perturbation of the reference trajectory driven by $u|_{[0,t]}$. By the
variation of constants formula \cite[Section~2.7]{ASBook},
\begin{equation}\label{eq:varfor}
	F^t_{q}(u+v)=G^t_{q(t)}(v)=q(t)\circ\eexp\int_0^tX^{u,\tau}_{v(\tau)}\,d\tau ,
\end{equation}
where
\[
	X^{u,\tau}_{v(\tau)}:=\Ad\left(\eexp\int_t^\tau X_{u(\theta)}\,d\theta\right)X_{v(\tau)} .
\]
By the above definition, for every $v\in\R^d$ one has
\begin{equation}\label{eq:derXut}
	\dot X^{u,\tau}_{v}=\Ad\left(\eexp\int_t^\tau X_{u(\theta)}\,d\theta\right)
	\left[X_{u(\tau)},X_v\right] .
\end{equation}
Like the time-$t$ end-point map, $G^t_{q(t)}$ satisfies a relation of the form
\eqref{eq:right_chronological_exp}, and therefore admits an explicit Volterra-type
expansion: for every $N\ge1$,
\begin{equation}\label{eq:Volterra_expansion}
	\begin{aligned}
		G_{q(t)}^t(v)&=q(t)\circ\left(\id+\sum_{k=1}^{N}\int_{\Sigma_k(t)}
		X^{u,\tau_k}_{v(\tau_k)}\circ\dots\circ X^{u,\tau_1}_{v(\tau_1)}
		\,d\tau_k\cdots d\tau_1\right)\\
		&+\int_{\Sigma_{N+1}(t)}q_v(\tau_{N+1})\circ X^{u,\tau_{N+1}}_{v(\tau_{N+1})}
		\circ\dots\circ X^{u,\tau_1}_{v(\tau_1)}\,d\tau_{N+1}\cdots d\tau_1 ,
	\end{aligned}
\end{equation}
where $q_v(\tau):=G^{\tau}_{q(t)}(v)$ and, for $k\ge1$ and $0\le t\le1$,
\begin{equation}\label{eq:simplex}
	\Sigma_k(t):=\left\{(\tau_k,\dots,\tau_1)\in\R^k\ \middle|\
	0\le\tau_k\le\dots\le\tau_1\le t\right\}
\end{equation}
denotes the standard $k$-dimensional simplex.

\begin{remark}[Orientation of the simplex]\label{rmk:simplex}
	The ordering in \eqref{eq:simplex} is the one induced by the right-chronological
	exponential: in every iterated integral appearing below, the leftmost operator
	carries the smallest time. This convention is in force throughout the paper; it
	fixes the sign of the second-order term \eqref{eq:sdif} and it is the one under which
	$\tau_3$ is the innermost integration variable in the third-order remainder
	\eqref{eq:remainderintegral}.
\end{remark}

The first differential of $G^t_{q(t)}$ at $v=0$ is
\begin{equation}\label{eq:fdif}
	d_0G^t_{q(t)}(v)=q(t)\circ\int_0^tX^{u,\tau_1}_{v(\tau_1)}\,d\tau_1,
	\qquad v\in L^2([0,t],\R^d).
\end{equation}
The action of a diffeomorphism $P$ on a vector $v$ is written, in chronological notation,
as $P_*v=v\circ P$. Using extension by zero to identify $L^2([0,\tau],\R^d)$ with a
subspace of $L^2([0,t],\R^d)$ for $0\le\tau\le t\le1$, the first differential obeys the
composition rule
\begin{equation}\label{eq:first_differential_composition}
	\begin{aligned}
		\left(\eexp\int_\tau^tX_{u(\theta)}\,d\theta\right)_*d_0G^\tau_{q(\tau)}(v)
		&=\left(q(\tau)\circ\int_0^\tau X^{u,\tau_1}_{v(\tau_1)}\,d\tau_1\right)
		\circ\int_\tau^tX_{u(\theta)}\,d\theta\\
		&=q(t)\circ\Ad\left(\eexp\int_t^\tau X_{u(\theta)}\,d\theta\right)
		\circ\int_0^\tau X^{u,\tau_1}_{v(\tau_1)}\,d\tau_1\\
		&=q(t)\circ\int_0^t\Ad\left(\eexp\int_t^{\tau_1}X_{u(\theta)}\,d\theta\right)
		X_{v(\tau_1)}\,d\tau_1=d_0G^t_{q(t)}(v),
	\end{aligned}
\end{equation}
which shows that, once the transported image is identified with a subspace of the final
one, the image of $d_0G^t_{q(t)}$ can only increase with $t$. In particular, since $u$ is
strictly singular, the first differential
$d_0G^t_{q(t)}:L^2([0,t],\R^d)\to T_{q(t)}M$ fails to be surjective for every
$0<t\le1$. For every nonzero covector $\lambda(1)\in T^*_{q(1)}M$ orthogonal to the image of
$d_0G^1_{q(1)}$, the curve $[0,1]\ni t\mapsto\lambda(t)$ defined by
\begin{equation}\label{eq:covectcurve}
	\lambda(t):=\left(\eexp\int_1^tX_{u(\tau)}\,d\tau\right)^*\lambda(1)\in T^*_{q(t)}M
\end{equation}
is orthogonal to $\im d_0G^t_{q(t)}$, since by \eqref{eq:first_differential_composition}
\[
	\left\langle\lambda(t),d_0G^t_{q(t)}(v)\right\rangle
	=\left\langle\lambda(1),d_0G^1_{q(1)}(v)\right\rangle=0,
	\qquad t\in[0,1],\quad v\in L^2([0,t],\R^d).
\]
Since $t\mapsto q(t)$ and $t\mapsto\lambda(t)$ are absolutely continuous, for every smooth
vector field $Y$ on $M$ the function $t\mapsto\langle\lambda(t),Y(q(t))\rangle$ is
absolutely continuous and, by \eqref{eq:covectcurve} and the computation leading to
\eqref{eq:derXut} (with $X_v$ replaced by $Y$),
\begin{equation}\label{eq:derpairing}
	\frac{d}{dt}\left\langle\lambda(t),Y(q(t))\right\rangle
	=\left\langle\lambda(t),\left[X_{u(t)},Y\right](q(t))\right\rangle
	\qquad\text{for a.e.\ }t\in[0,1].
\end{equation}

\subsection{Second-order conditions}
The second differential is twice the second iterated integral in
\eqref{eq:Volterra_expansion}, that is
$\frac12d_0^2G^t_{q(t)}(v)=q(t)\circ\int_{\Sigma_2(t)}X^{u,\tau_2}_{v(\tau_2)}
\circ X^{u,\tau_1}_{v(\tau_1)}\,d\tau_2d\tau_1$. Following the computations of
\cite[Section~3]{BMP20},
\begin{equation}\label{eq:Second_order_differential}
	d_0^2G_{q(t)}^t(v)=q(t)\circ\int_{\Sigma_{2}(t)}
	\left[X^{u,\tau_2}_{v(\tau_2)},X^{u,\tau_1}_{v(\tau_1)}\right]d\tau_2d\tau_1
	+q(t)\circ\int_0^tX^{u,\tau}_{v(\tau)}\,d\tau
	\circ\int_0^tX^{u,\tau}_{v(\tau)}\,d\tau .
\end{equation}
The second summand is a genuinely second-order differential operator; it vanishes when
evaluated on $v\in\ker d_0G^t_{q(t)}$, and in that case $d_0^2G^t_{q(t)}(v)$ is a tangent
vector at $q(t)$, defined modulo $\im d_0G^t_{q(t)}$. Consequently, for any adjoint curve
$t\mapsto\lambda(t)$ as in \eqref{eq:covectcurve}, the duality product
$\left\langle\lambda(t),d_0^2G^t_{q(t)}(v)\right\rangle$ defines an intrinsic quadratic
form on $\ker d_0G^t_{q(t)}$.

\begin{remark}[The second-order operator is not discarded]\label{rmk:notdiscarded}
	In Section~\ref{sec:separation} we shall need the expansion of the end-point map for
	arbitrary admissible variations, not only for those in the kernel of
	$d_0G^t_{q(t)}$. The second summand of \eqref{eq:Second_order_differential} does not
	vanish there, and it will be retained and estimated separately: this is the term
	$S^t(\bv)$ of Lemma~\ref{lem:cubicremainder}. Its size is controlled by the
	defect $\left|d_0G^t_{q(t)}(\bv)\right|$, which by Lemma~\ref{lem:defect} is
	small precisely for exact competitors. Overlooking this term is the one place where the
	naive scheme of Subsection~\ref{ssec:onedir} would fail.
\end{remark}

Writing $V(\tau)=\int_0^\tau v(\theta)\,d\theta$ for the primitive of the variation, and
using the identity $X^{u,\tau}_{v(\tau)}=\frac{d}{d\tau}X^{u,\tau}_{V(\tau)}
-\dot X^{u,\tau}_{V(\tau)}$ together with two integrations by parts, the bracket part of
\eqref{eq:Second_order_differential}, paired with $\lambda(t)$, becomes
\begin{equation}\label{eq:sdif}
	\begin{aligned}
		\int_{\Sigma_2(t)}&\left\langle\lambda(t),
		\left[X^{u,\tau_2}_{v(\tau_2)},X^{u,\tau_1}_{v(\tau_1)}\right](q(t))
		\right\rangle d\tau_2d\tau_1\\
		&=\int_0^t\left\langle\lambda(t),
		\left[X^{u,\tau}_{V(\tau)},X^{u,\tau}_{v(\tau)}\right](q(t))\right\rangle d\tau\\
		&+\int_0^t\left\langle\lambda(t),
		\left[\dot X^{u,\tau}_{V(\tau)},X^{u,\tau}_{V(\tau)}\right](q(t))\right\rangle d\tau\\
		&+\int_0^t\left\langle\lambda(t),
		\left[X^{u,t}_{V(t)}+\int_0^{\tau}\dot X^{u,\theta}_{V(\theta)}\,d\theta,\,
		\dot X^{u,\tau}_{V(\tau)}\right](q(t))\right\rangle d\tau .
	\end{aligned}
\end{equation}
The Goh condition \cite{ASBook,ABB20} states that a strictly singular length minimizer
admits at least one nonzero adjoint curve of covectors such that
\begin{equation}\label{eq:Goh_section}
	\left\langle\lambda(t),\left[X_i,X_j\right](q(t))\right\rangle=0
\end{equation}
holds along the curve for every $t\in[0,1]$ and every pair of indices $1\le i<j\le d$.
Since $\langle\lambda(t),[X^{u,\tau}_a,X^{u,\tau}_b](q(t))\rangle
=\langle\lambda(\tau),[X_a,X_b](q(\tau))\rangle$, relation \eqref{eq:Goh_section}
annihilates the first summand of \eqref{eq:sdif}; from now on we always work with an
adjoint curve satisfying \eqref{eq:Goh_section}. The second necessary condition
classically obtained is the Legendre condition: the quadratic form
\begin{equation}\label{eq:Legcond}
	\begin{aligned}
		Q^t:\R^d&\longrightarrow\R,\\
		w&\longmapsto\left\langle\lambda(t),\left[\dot X_w,X_w\right](q(t))\right\rangle
		=\left\langle\lambda(t),\left[[X_{u(t)},X_w],X_w\right](q(t))\right\rangle
	\end{aligned}
\end{equation}
is non-negative on $u(t)^\perp\subset\R^d$ for every Lebesgue point $t$ of $u$ in $[0,1]$
(in particular a.e.\ on $[0,1]$). This can be restated as follows.

\begin{proposition}\label{prop:essmin}
	Suppose that $Q^t\ge0$ on $u(t)^\perp\subset\R^d$ at every Lebesgue point of $u$. Then
	\[
		\mathrm{ess\,inf}_{t\in[0,1]}
		\left(\min_{w\in u(t)^\perp,\,|w|=1}Q^{t}(w)\right)\ge0 .
	\]
\end{proposition}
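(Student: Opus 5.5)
The statement is essentially a pointwise-to-essential passage, so the plan is to reduce it to the pointwise hypothesis plus a measurability check. Let $E\subset[0,1]$ be the set of Lebesgue points of $u$. By the Lebesgue differentiation theorem $E$ has full measure. Define
\[
	m(t):=\min_{w\in u(t)^\perp,\,|w|=1}Q^t(w),\qquad t\in E .
\]
For each fixed $t\in E$ the minimum exists. Indeed, $Q^t$ is a quadratic form on $\R^d$, hence continuous. Moreover $\{w\in u(t)^\perp:|w|=1\}$ is compact: it is the unit sphere of a subspace of dimension $d-1\ge1$, and this uses $|u(t)|=1$, which we may assume on $E$ after discarding a null set. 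The hypothesis $Q^t\ge0$ on $u(t)^\perp$ then gives $m(t)\ge0$ for every $t\in E$. Since the essential infimum ignores null sets, the inequality $\mathrm{ess\,inf}\,m\ge0$ follows once $m$ is known to be measurable, or once the essential infimum is read in the outer sense.

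The only genuine step is the measurability of $t\mapsto m(t)$. I would handle it as follows.

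\begin{itemize}
\item The coefficients of $Q^t$ are measurable in $t$. Writing $Q^t(w)=\sum_{i,j,k}u_k(t)w_iw_j\,\langle\lambda(t),[[X_k,X_i],X_j](q(t))\rangle$, each pairing $t\mapsto\langle\lambda(t),Y(q(t))\rangle$ is continuous, in fact absolutely continuous by the discussion preceding \eqref{eq:derpairing}. Each $u_k$ is measurable. So the symmetric matrix $A(t)$ of $Q^t$ has measurable entries.
\item The constraint set is handled through an orthogonal projection. Let $P(t)=I-u(t)u(t)^{T}$, the projection onto $u(t)^\perp$, which is measurable. Then
\[
	m(t)=\min_{|w|=1,\ w\perp u(t)}w^TA(t)w .
\]
\item Measurability then follows from a countable formula. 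Take a countable dense set $D$ of the unit sphere $S^{d-1}$. For each $w\in D$ with $P(t)w\neq0$, set $w_t=P(t)w/|P(t)w|$. These points are dense in the unit sphere of $u(t)^\perp$. Hence $m(t)=\inf_{w\in D,\,P(t)w\ne0}w_t^TA(t)w_t$, a countable infimum of measurable functions, where each function is extended by $+\infty$ on the measurable set where $P(t)w=0$.
\end{itemize}

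Alternatively, $m(t)$ is the smallest eigenvalue of the compression $P(t)A(t)P(t)$ restricted to $u(t)^\perp$, which is a continuous function of the measurable pair $(u(t),A(t))$.

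Given measurability, the conclusion is immediate: $m\ge0$ on the full-measure set $E$, so $\mathrm{ess\,inf}_{t\in[0,1]}m(t)\ge0$. The measurability step is the only place I expect any care to be needed, and it is routine.
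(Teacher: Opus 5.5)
Your proof is correct and follows the paper's route: $m\ge0$ at every Lebesgue point of $u$, and the Lebesgue points have full measure, so $0$ is an essential lower bound for $m$. Your measurability argument is sound, but the conclusion does not need it. The set $\{m<0\}$ lies inside the null set $[0,1]\setminus E$, and Lebesgue measure is complete, so that set is automatically measurable and null; this is why the paper omits the step.
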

\begin{proof}
	Let $\xi_{\min}(t):=\min_{w\in u(t)^\perp,|w|=1}Q^{t}(w)$. Since $Q^t$ is homogeneous
	of degree $2$, we have $Q^t\ge0$ on $u(t)^\perp$ if and only if $\xi_{\min}(t)\ge0$. In
	particular $\xi_{\min}(t)\ge0$ for almost every $t\in[0,1]$, that is
	$\mu\left(\{t\in[0,1]\mid\xi_{\min}(t)<0\}\right)=0$; hence $0$ is an essential lower
	bound for $\xi_{\min}$ and, by definition of essential infimum as the supremum of such
	bounds, $\mathrm{ess\,inf}_{t\in[0,1]}\xi_{\min}(t)\ge0$.
\end{proof}

Proposition~\ref{prop:essmin} opens three possibilities for a candidate optimal control
$u$:
\begin{itemize}
	\item[i)] $\mathrm{ess\,inf}_{t\in[0,1]}\xi_{\min}(t)\ge c>0$;
	\item[ii)] $\mathrm{ess\,inf}_{t\in[0,1]}\xi_{\min}(t)=0$, with $\xi_{\min}(t)>0$
	a.e.\ on $[0,1]$;
	\item[iii)] $\mathrm{ess\,inf}_{t\in[0,1]}\xi_{\min}(t)=0$, with $\xi_{\min}(t)=0$ on
	a set of positive measure.
\end{itemize}
In this paper we deal only with the first case: we assume that there exists $c>0$ such
that
\begin{equation}\label{eq:strongLeg}
	Q^{t}(w)=\left\langle\lambda(t),\left[\dot X_w,X_w\right](q(t))\right\rangle
	\ge c\,|w|^2
\end{equation}
for a.e.\ $t\in[0,1]$ and every $w\in u(t)^\perp$.

\begin{remark}\label{rmk:strongLeg}
	Condition \eqref{eq:strongLeg} is commonly referred to as the strong generalized
	Legendre condition. In \cite{AS95,AS96} the inequality is assumed to hold at
	all points $t\in[0,1]$, an assumption which considerably simplifies the proof of
	the smoothness of $u$. Throughout this paper we assume its validity only on the set of
	Lebesgue points of $u$, and we recover the smoothness by a compactness argument along
	the essential graph \eqref{eq:essgraph}. For rank-two distributions,
	\eqref{eq:strongLeg} has been used to prove the local $W^{1,1}$-minimality of nice
	abnormal curves, i.e.\ under \eqref{eq:nice}; see Subsection~\ref{ssec:nice} and
	Remark~\ref{rmk:niceanalogy}.
\end{remark}

\section{Regularity of the control and adapted coordinates}\label{sec:regularity}

\subsection{Algebraic preliminaries}
It is convenient to encode all the brackets of length three appearing in the analysis
into a single trilinear object. For $z=(q,\lambda)\in T^*M$ we set
\begin{equation}\label{eq:trilinear}
	T_z(a,b,c):=\left\langle\lambda,\left[[X_a,X_b],X_c\right](q)\right\rangle,
	\qquad a,b,c\in\R^d ,
\end{equation}
and we associate with $T_z$ the matrix-valued map
\begin{equation}\label{eq:Rmatrix}
	R(q,\lambda,w)_{ij}:=T_z(w,e_i,e_j),
	\qquad w\in\R^d,\quad i,j=1,\dots,d,
\end{equation}
where $\{e_1,\dots,e_d\}$ is the canonical basis of $\R^d$. Writing
$z(t):=(q(t),\lambda(t))$, the Legendre form \eqref{eq:Legcond} reads
\begin{equation}\label{eq:RviaT}
	Q^{t}(w)=T_{z(t)}(u(t),w,w).
\end{equation}
Two structural identities follow directly from the definition. First, antisymmetry in the
first two arguments:
\begin{equation}\label{eq:T1}
	T_z(a,b,c)=-T_z(b,a,c);
	\qquad\text{in particular}\qquad T_z(a,a,c)=0 .
\end{equation}
Second, taking $a=b=w$ in \eqref{eq:T1} we obtain, for every $w\in\R^d$ and every
$j=1,\dots,d$,
\begin{equation}\label{eq:T2}
	\sum_{i=1}^dw_i\,R(q,\lambda,w)_{ij}=T_z(w,w,e_j)=0,
	\qquad\text{that is}\qquad R(q,\lambda,w)^{T}w\equiv0 .
\end{equation}
The differentiated Goh condition,
$\left\langle\lambda(t),\left[X_{u(t)},[X_i,X_j]\right](q(t))\right\rangle=0$, is best
expressed through the linear map
\begin{equation}\label{eq:Bmap}
	B(z):\R^d\longrightarrow\Lambda^2(\R^d)^*,
	\qquad
	\left(B(z)a\right)_{ij}:=T_z(a,e_i,e_j)-T_z(a,e_j,e_i).
\end{equation}
Indeed, by the Jacobi identity
$\left[X_a,[X_b,X_c]\right]=\left[[X_a,X_b],X_c\right]-\left[[X_a,X_c],X_b\right]$, so
that
\begin{equation}\label{eq:Gohviatrilinear}
	\left\langle\lambda,\left[X_a,[X_b,X_c]\right](q)\right\rangle=T_z(a,b,c)-T_z(a,c,b),
\end{equation}
and the vanishing of the left-hand side for all $b,c$ is exactly the statement
$B(z)a=0$, i.e.\ the symmetry of the bilinear form $T_z(a,\cdot,\cdot)$ in its last two
arguments.

We record four facts used in the proof of the regularity of $u$. The first two are purely
algebraic and pointwise in $z$; the last two are compactness statements along the
reference extremal.

\begin{lemma}\label{lem:rank}
	Let $z=(q,\lambda)\in T^*M$, let $w_0\in\ker B(z)$ with $|w_0|=1$, and assume that
	there exists $c>0$ such that
	\[
		T_z(w_0,y,y)\ge c|y|^2\qquad\text{for every }y\in w_0^\perp .
	\]
	Then $\ker B(z)=\Span\{w_0\}$. Moreover $R(q,\lambda,w_0)$ is symmetric and
	$\ker R(q,\lambda,w_0)=\Span\{w_0\}$.
\end{lemma}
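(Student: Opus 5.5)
Since $w_0\in\ker B(z)$, the definition \eqref{eq:Bmap} gives $T_z(w_0,e_i,e_j)=T_z(w_0,e_j,e_i)$ for all $i,j$, i.e.\ $R(q,\lambda,w_0)_{ij}=R(q,\lambda,w_0)_{ji}$. This settles the symmetry of $R_0:=R(q,\lambda,w_0)$ immediately, and the rest of the argument rests on it.

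For the kernel of $R_0$, I would combine symmetry with the identity \eqref{eq:T2}, which gives $R_0^Tw_0=0$ and hence $R_0w_0=0$. So $w_0\in\ker R_0$, and $w_0^\perp$ is invariant under $R_0$, because $\langle R_0y,w_0\rangle=\langle y,R_0w_0\rangle=0$. On $w_0^\perp$ the quadratic form $y\mapsto\langle R_0y,y\rangle=T_z(w_0,y,y)$ is bounded below by $c|y|^2$, so $R_0|_{w_0^\perp}$ is positive definite and injective. Decomposing an arbitrary vector as $\alpha w_0+y$ with $y\perp w_0$, we get $R_0(\alpha w_0+y)=R_0y$, which vanishes only if $y=0$. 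Hence $\ker R_0=\Span\{w_0\}$.

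For $\ker B(z)$, I would take $a\in\ker B(z)$ and show that $a\parallel w_0$. The antisymmetry \eqref{eq:T1}, together with the symmetry of $T_z(w_0,\cdot,\cdot)$ and of $T_z(a,\cdot,\cdot)$ in their last two arguments, lets us move the entries around. First,
\[
T_z(w_0,a,y)=T_z(w_0,y,a)=-T_z(y,w_0,a),
\]
and second,
\[
T_z(a,w_0,y)=T_z(a,y,w_0)=-T_z(y,a,w_0).
\]
The cleanest route is to evaluate $T_z(a,w_0,w_0)$ in two ways. By symmetry of $T_z(a,\cdot,\cdot)$ and antisymmetry,
\[
T_z(a,w_0,w_0)=-T_z(w_0,a,w_0),
\]
and by symmetry of $T_z(w_0,\cdot,\cdot)$,
\[
T_z(w_0,a,w_0)=T_z(w_0,w_0,a)=0.
\]
That only yields a trivial identity, so I would instead use the general pair $(w_0,y)$. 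We have
\[
T_z(a,w_0,y)=-T_z(w_0,a,y)=-(R_0a)_y ,
\]
while also
\[
T_z(a,w_0,y)=T_z(a,y,w_0)=-T_z(y,a,w_0).
\]
To close the loop I would apply the same identity with $y=a$, giving $T_z(a,w_0,a)=-\langle R_0a,a\rangle$. On the other hand, symmetry of $T_z(a,\cdot,\cdot)$ gives $T_z(a,w_0,a)=T_z(a,a,w_0)=0$. Therefore $\langle R_0a,a\rangle=0$. Writing $a=\alpha w_0+y$ with $y\perp w_0$, we have $\langle R_0a,a\rangle=\langle R_0y,y\rangle\ge c|y|^2$, so $y=0$ and $a=\alpha w_0$. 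This proves $\ker B(z)=\Span\{w_0\}$, using only \eqref{eq:T1}, \eqref{eq:T2}, the definition \eqref{eq:Bmap} and the coercivity hypothesis.

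The main obstacle is finding the right test in this last step, namely choosing the contraction $T_z(a,w_0,a)$. It is the one that is killed by $a\in\ker B(z)$ together with \eqref{eq:T1}, and it equals $-T_z(w_0,a,a)$, to which coercivity applies. Everything else is linear algebra.
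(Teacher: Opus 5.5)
Your proof is correct and is essentially the paper's argument. The decisive step, evaluating $T_z(a,w_0,a)$ both as $T_z(a,a,w_0)=0$ and as $-T_z(w_0,a,a)$ and then invoking coercivity, is exactly the paper's. The only cosmetic difference is that you treat a general $a\in\ker B(z)$ by splitting $a=\alpha w_0+y$ (using $R_0w_0=0$ and the symmetry of $R_0$), whereas the paper takes $w\perp w_0$ directly and uses that $\ker B(z)$ is a subspace. In the final write-up, drop the exploratory dead ends (the evaluation of $T_z(a,w_0,w_0)$ and the unused chains of identities).
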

\begin{proof}
	Let $w\in\ker B(z)$ with $w\perp w_0$ and $|w|=1$. Since $w\in\ker B(z)$, the bilinear
	form $T_z(w,\cdot,\cdot)$ is symmetric in its last two arguments, whence
	$T_z(w,w_0,w)=T_z(w,w,w_0)=0$, the last equality following from \eqref{eq:T1}. On the
	other hand, again by \eqref{eq:T1}, $T_z(w,w_0,w)=-T_z(w_0,w,w)$. Combining the two
	identities we obtain $T_z(w_0,w,w)=0$, contradicting $T_z(w_0,w,w)\ge c|w|^2=c>0$.
	Therefore $\ker B(z)\cap w_0^\perp=\{0\}$, and since $w_0\in\ker B(z)$ the first claim
	follows.

	As for the second claim, $w_0\in\ker B(z)$ means that $R(q,\lambda,w_0)$ is symmetric,
	and then \eqref{eq:T2} gives $R(q,\lambda,w_0)w_0=R(q,\lambda,w_0)^Tw_0=0$, so that
	$\ker R(q,\lambda,w_0)\supseteq\Span\{w_0\}$. Conversely, any
	$y\in\ker R(q,\lambda,w_0)\cap w_0^\perp$ satisfies
	$0=\langle y,R(q,\lambda,w_0)y\rangle=T_z(w_0,y,y)\ge c|y|^2$, hence $y=0$.
\end{proof}

\begin{remark}[Geometric reading of Lemma~\ref{lem:rank}]\label{rmk:geomrank}
	The operator $B(z)$ measures the failure of $\lambda$ to annihilate the brackets of
	length three of the form $[X_a,[X_b,X_c]]$; the differentiated Goh condition says that
	the control direction lies in its kernel. Lemma~\ref{lem:rank} states that coercivity
	of the Legendre form forces that kernel to be as small as it can possibly be, namely a
	line. Since the line is spanned by $u(t)$, the control direction is determined -- up to a sign, fixed in Lemma~\ref{lem:sign} by the positivity of $T_z$ -- by the
	state-costate pair alone: the extremal is, in this sense, a genuine characteristic
	curve of a smooth field of directions, which is the true reason behind its smoothness.
\end{remark}

\begin{lemma}\label{lem:sign}
	Let $d\ge2$, let $z=(q,\lambda)\in T^*M$ and let $w_0\in S^{d-1}$ satisfy the
	hypotheses of Lemma~\ref{lem:rank}. Then $w_0$ is the unique $w\in S^{d-1}$ such that
	\[
		B(z)w=0\qquad\text{and}\qquad T_z(w,y,y)>0\quad\text{for every }0\neq y\in w^\perp .
	\]
	Moreover the bordered matrix
	\[
		\begin{pmatrix}R(q,\lambda,w_0)&w_0\\ w_0^T&0\end{pmatrix}
	\]
	is invertible.
\end{lemma}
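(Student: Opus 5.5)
The argument splits into two parts: uniqueness of $w_0$ up to sign, then the bordered matrix.

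\emph{Uniqueness.} Let $w\in S^{d-1}$ satisfy $B(z)w=0$ and $T_z(w,y,y)>0$ for every $0\neq y\in w^\perp$. By Lemma~\ref{lem:rank}, $\ker B(z)=\Span\{w_0\}$, so $w=\pm w_0$. It remains to exclude $w=-w_0$. Since $d\ge2$, there is a nonzero $y\in w_0^\perp=(-w_0)^\perp$. Trilinearity gives
\[
T_z(-w_0,y,y)=-T_z(w_0,y,y)\le -c|y|^2<0,
\]
which contradicts the positivity required of $w$. Hence $w=w_0$. Conversely, $w_0$ has both properties by hypothesis, since $c|y|^2>0$ for $y\neq0$.

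\emph{Invertibility.} Write $R:=R(q,\lambda,w_0)$. By Lemma~\ref{lem:rank}, $R$ is symmetric and $\ker R=\Span\{w_0\}$. Suppose
\[
\begin{pmatrix}R&w_0\\ w_0^T&0\end{pmatrix}\begin{pmatrix}x\\ s\end{pmatrix}=0,
\]
that is, $Rx+sw_0=0$ and $w_0^Tx=0$. Take the inner product of the first equation with $w_0$. Since $R$ is symmetric and $Rw_0=0$, we get $\langle w_0,Rx\rangle=\langle Rw_0,x\rangle=0$, and therefore $s|w_0|^2=s=0$. Then $Rx=0$ forces $x\in\Span\{w_0\}$, while $x\perp w_0$; so $x=0$. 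The square matrix has trivial kernel and is therefore invertible.

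This last step could also be done with $x\in w_0^\perp$, using $0=\langle x,Rx\rangle=T_z(w_0,x,x)\ge c|x|^2$. Neither step is a real obstacle. The only points needing care are that the hypothesis $d\ge2$ is what makes $w_0^\perp$ nontrivial, which is needed to rule out $-w_0$, and that the symmetry of $R$ is what allows pairing with $w_0$.
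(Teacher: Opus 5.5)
Your proposal is correct and matches the paper's proof essentially step for step: you reduce to $w=\pm w_0$ via $\ker B(z)=\Span\{w_0\}$, rule out $-w_0$ by linearity in the first slot using $d\ge2$, and get invertibility by pairing with $w_0$ (symmetry of $R$ and $Rw_0=0$ give $s=0$), then use $\ker R=\Span\{w_0\}$ on $w_0^\perp$. Your additional remark that $w_0$ itself satisfies both properties is a harmless completion that the paper leaves implicit.
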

\begin{proof}
	By Lemma~\ref{lem:rank}, any $w$ as in the statement belongs to
	$\Span\{w_0\}\cap S^{d-1}=\{\pm w_0\}$. Since $T_z$ is linear in its first argument,
	$T_z(-w_0,y,y)=-T_z(w_0,y,y)\le-c|y|^2$ for $y\in w_0^\perp=(-w_0)^\perp$, so that
	$-w_0$ violates the positivity requirement, the space $w_0^\perp$ being non-trivial
	because $d\ge2$. Hence $w=w_0$.

	For the invertibility assertion, assume that
	\[
		\begin{pmatrix}R(q,\lambda,w_0)&w_0\\ w_0^T&0\end{pmatrix}
		\begin{pmatrix}y\\ \alpha\end{pmatrix}=\begin{pmatrix}0\\ 0\end{pmatrix}.
	\]
	The second equation gives $y\in w_0^\perp$. Multiplying the first equation
	$R(q,\lambda,w_0)y+\alpha w_0=0$ by $w_0^T$, and using that $R(q,\lambda,w_0)$ is
	symmetric with $R(q,\lambda,w_0)w_0=0$, so that
	$w_0^TR(q,\lambda,w_0)y=\left\langle R(q,\lambda,w_0)w_0,y\right\rangle=0$, we deduce
	$\alpha|w_0|^2=0$, i.e.\ $\alpha=0$. This leaves $R(q,\lambda,w_0)y=0$ with
	$y\in w_0^\perp$, and Lemma~\ref{lem:rank} forces $y=0$.
\end{proof}

The two lemmas above are pointwise statements. In order to use them along the reference
extremal, where the control is a priori only defined almost everywhere, we study their
behaviour under limits. Let $E\subset[0,1]$ denote the full-measure set of Lebesgue
points of $u$ at which the Goh condition \eqref{eq:Goh_section}, its time derivative, and
the strong Legendre condition \eqref{eq:strongLeg} hold, and consider the essential
graph
\begin{equation}\label{eq:essgraph}
	\mathcal G_0:=\left\{\big(q(t),\lambda(t),u(t)\big)\ \middle|\ t\in E\right\}
	\subset T^*M\times S^{d-1},
	\qquad\mathcal G:=\overline{\mathcal G_0}.
\end{equation}

\begin{lemma}\label{lem:essential-graph}
	The set $\mathcal G$ is compact, and every $(q_*,\lambda_*,w_*)\in\mathcal G$ satisfies
	\[
		|w_*|=1,\qquad B(q_*,\lambda_*)w_*=0,\qquad R(q_*,\lambda_*,w_*)w_*=0 .
	\]
\end{lemma}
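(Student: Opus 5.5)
Compactness comes first. The curve $t\mapsto z(t)=(q(t),\lambda(t))$ is absolutely continuous on $[0,1]$, so its image $K:=\{z(t)\mid t\in[0,1]\}$ is a compact subset of $T^*M$. Since $|u(t)|=1$ on $E$ (we may shrink $E$ by a null set so that the arclength normalization holds pointwise there), we get $\mathcal G_0\subset K\times S^{d-1}$. This product is compact, so its closed subset $\mathcal G$ is compact. Moreover every $(q_*,\lambda_*,w_*)\in\mathcal G$ has $(q_*,\lambda_*)\in K$ and $w_*\in S^{d-1}$, which gives $|w_*|=1$.

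For the two identities the plan is to prove them on $\mathcal G_0$ and pass to the closure by continuity. The maps $(q,\lambda,w)\mapsto B(q,\lambda)w$ and $(q,\lambda,w)\mapsto R(q,\lambda,w)w$ are continuous on $T^*M\times\R^d$. Indeed, $T_z(a,b,c)=\langle\lambda,[[X_a,X_b],X_c](q)\rangle$ is trilinear in $(a,b,c)$ with coefficients $\langle\lambda,[[X_i,X_j],X_k](q)\rangle$, which are smooth in $z$ because the $X_i$ are smooth and the pairing is smooth on $T^*M$. The zero sets of continuous maps are closed, so it suffices to verify both identities at points of $\mathcal G_0$.

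Fix $t\in E$ and put $z=z(t)$, $w=u(t)$. By construction of $E$, the time derivative of the Goh condition holds at $t$. To see what it says, note that by \eqref{eq:Goh_section} the function $t\mapsto\langle\lambda(t),[X_i,X_j](q(t))\rangle$ vanishes identically. By \eqref{eq:derpairing} with $Y=[X_i,X_j]$, its derivative at $t$ is $\langle\lambda(t),[X_{u(t)},[X_i,X_j]](q(t))\rangle$. By \eqref{eq:Gohviatrilinear} this equals $(B(z)u(t))_{ij}$. Hence $B(z)w=0$.

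A small point needs care here: \eqref{eq:derpairing} holds only almost everywhere, and at a specific point we need the evaluation with the value $u(t)$. This is why $E$ was defined to include the differentiated Goh condition at the Lebesgue points. If one prefers to derive it, one can take $E$ to be the Lebesgue points $t$ of $u$ at which \eqref{eq:derpairing} holds for the finitely many fields $[X_i,X_j]$. At such points the derivative of the zero function is zero, and the right-hand side involves $u(t)$ linearly and is continuous in $z$. I expect this to be the only delicate step, and it is handled by the definition of $E$.

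Finally, $B(z)w=0$ means that $R(q,\lambda,w)$ is symmetric. Then \eqref{eq:T2} gives $R(q,\lambda,w)w=R(q,\lambda,w)^Tw=0$, exactly as in the proof of Lemma~\ref{lem:rank}. Thus both identities hold on $\mathcal G_0$, and by the continuity argument above they extend to all of $\mathcal G$, which concludes the proof. Note that the strong Legendre inequality is not used, and it is not claimed on $\mathcal G$, since coercivity need not survive the limit without further work.
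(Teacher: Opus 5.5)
Your proposal is correct and follows essentially the same route as the paper. Compactness comes from $\mathcal G_0\subset\Gamma\times S^{d-1}$. The identities $B(z)w=0$ and $R(z,w)w=0$ hold on $\mathcal G_0$ by the differentiated Goh condition (built into the definition of $E$) together with \eqref{eq:T2}, and they pass to $\mathcal G$ because the zero sets of the continuous maps $(z,w)\mapsto B(z)w$ and $(z,w)\mapsto R(z,w)w$ are closed.
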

\begin{proof}
	The curve $t\mapsto(q(t),\lambda(t))$ is continuous, hence its image
	$\Gamma:=\{(q(t),\lambda(t)):t\in[0,1]\}$ is compact. Since $|u(t)|=1$ for $t\in E$, we
	have $\mathcal G_0\subseteq\Gamma\times S^{d-1}$, and therefore $\mathcal G$ is compact,
	being a closed subset of a compact set.

	For every $t\in E$ the differentiated Goh condition gives $B(z(t))u(t)=0$ and hence, by
	\eqref{eq:T2} and the symmetry of $R(q(t),\lambda(t),u(t))$, also
	$R(q(t),\lambda(t),u(t))u(t)=0$. The maps $z\mapsto B(z)$ and $(z,w)\mapsto R(z,w)w$
	are smooth, hence continuous, so that the sets $\{(z,w):B(z)w=0\}$,
	$\{(z,w):R(z,w)w=0\}$ and $T^*M\times S^{d-1}$ are closed. Since $\mathcal G_0$ is
	contained in all of them, so is its closure $\mathcal G$.
\end{proof}

\begin{lemma}\label{lem:uniform-bordered-invertibility}
	For every $(q_*,\lambda_*,w_*)\in\mathcal G$ one has
	\[
		\left\langle y,R(q_*,\lambda_*,w_*)y\right\rangle\ge c|y|^2
		\qquad\text{for every }y\in w_*^\perp .
	\]
	Consequently $\ker B(q_*,\lambda_*)=\Span\{w_*\}$, the bordered matrix
	\[
		\mathcal M(q_*,\lambda_*,w_*):=
		\begin{pmatrix}R(q_*,\lambda_*,w_*)&w_*\\ w_*^T&0\end{pmatrix}
	\]
	is invertible, and
	$\sup_{\zeta\in\mathcal G}\left\|\mathcal M(\zeta)^{-1}\right\|<\infty$.
\end{lemma}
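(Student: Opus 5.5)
The plan is to pass the coercivity inequality to the limit along sequences in $\mathcal G_0$, and then to invoke the pointwise Lemmas~\ref{lem:rank} and~\ref{lem:sign} together with a compactness argument for the uniform bound.

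\textbf{Step 1: coercivity on $\mathcal G$.} Fix $\zeta_*=(q_*,\lambda_*,w_*)\in\mathcal G$. Choose $t_k\in E$ with $\zeta_k:=(q(t_k),\lambda(t_k),u(t_k))\to\zeta_*$. At points of $E$ the strong Legendre condition \eqref{eq:strongLeg}, via \eqref{eq:RviaT}, gives
\[
\langle y,R(\zeta_k)y\rangle\ge c|y|^2\qquad\text{for }y\in u(t_k)^\perp .
\]
Given $y\in w_*^\perp$, set $y_k:=y-\langle y,u(t_k)\rangle u(t_k)\in u(t_k)^\perp$. Then $y_k\to y$, since $u(t_k)\to w_*$ and $\langle y,w_*\rangle=0$. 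The map $(\zeta,y)\mapsto\langle y,R(\zeta)y\rangle$ is continuous, because $R$ is smooth in $(q,\lambda,w)$. Passing to the limit in $\langle y_k,R(\zeta_k)y_k\rangle\ge c|y_k|^2$ therefore yields the claimed inequality at $\zeta_*$.

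\textbf{Step 2: kernel and invertibility.} By Lemma~\ref{lem:essential-graph} we have $|w_*|=1$ and $B(q_*,\lambda_*)w_*=0$. Combined with Step 1, this is exactly the hypothesis of Lemma~\ref{lem:rank} with $w_0=w_*$: note that $\langle y,R(\zeta_*)y\rangle=T_{z_*}(w_*,y,y)$. Lemma~\ref{lem:rank} then gives $\ker B(q_*,\lambda_*)=\Span\{w_*\}$, and Lemma~\ref{lem:sign} gives the invertibility of $\mathcal M(\zeta_*)$.

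\textbf{Step 3: uniform bound.} The map $\zeta\mapsto\mathcal M(\zeta)$ is continuous on $T^*M\times S^{d-1}$. On the set of invertible matrices, inversion is continuous. Hence $\zeta\mapsto\|\mathcal M(\zeta)^{-1}\|$ is a continuous real function on $\mathcal G$, which is compact by Lemma~\ref{lem:essential-graph}, so it is bounded. Equivalently, $\zeta\mapsto|\det\mathcal M(\zeta)|$ is continuous and positive on the compact set $\mathcal G$, hence bounded below by some $\delta>0$. The cofactor formula then bounds $\|\mathcal M^{-1}\|$ by $\sup_{\mathcal G}\|\mathrm{adj}\,\mathcal M\|/\delta$.

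\textbf{Main obstacle.} The only delicate point is Step 1. The constraint set $u(t_k)^\perp$ moves with $k$, so the inequality cannot be passed to the limit directly; the projection trick handles this. It is also essential that $c$ is uniform over $E$: this is precisely where the strong rather than pointwise-positive Legendre condition enters, since a merely positive $\xi_{\min}$ could degenerate in the limit.
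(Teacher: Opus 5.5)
Your proposal is correct and follows the paper's proof essentially step by step. You use the same projection $y_k=y-\langle y,u(t_k)\rangle u(t_k)$ to pass the strong Legendre inequality to the limit. You then invoke Lemmas~\ref{lem:essential-graph}, \ref{lem:rank} and~\ref{lem:sign} for the kernel and the invertibility, and finish with continuity of matrix inversion on the compact set $\mathcal G$ for the uniform bound; your determinant/adjugate variant is just an equivalent phrasing of that last step.
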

\begin{proof}
	Let $(q(t_n),\lambda(t_n),u(t_n))\to(q_*,\lambda_*,w_*)$ with $t_n\in E$. Fix
	$y\in w_*^\perp$ and set $y_n:=y-\left\langle y,u(t_n)\right\rangle u(t_n)\in
	u(t_n)^\perp$. Since $u(t_n)\to w_*$ and $\langle y,w_*\rangle=0$, we have $y_n\to y$.
	The strong Legendre condition, written through \eqref{eq:RviaT}, gives
	$\left\langle y_n,R(q(t_n),\lambda(t_n),u(t_n))y_n\right\rangle\ge c|y_n|^2$, and
	passing to the limit, by continuity of $R$,
	$\left\langle y,R(q_*,\lambda_*,w_*)y\right\rangle\ge c|y|^2$.

	By Lemma~\ref{lem:essential-graph} we have $B(q_*,\lambda_*)w_*=0$, so that
	Lemma~\ref{lem:rank} applies at $(q_*,\lambda_*)$ with $w_0=w_*$ and yields
	$\ker B(q_*,\lambda_*)=\Span\{w_*\}$ together with
	$\ker R(q_*,\lambda_*,w_*)=\Span\{w_*\}$; Lemma~\ref{lem:sign} then gives the
	invertibility of $\mathcal M(q_*,\lambda_*,w_*)$. Finally, $\zeta\mapsto\mathcal
	M(\zeta)$ is continuous on the compact set $\mathcal G$ and invertible at each of its
	points, hence $\zeta\mapsto\mathcal M(\zeta)^{-1}$ is continuous on $\mathcal G$ and its
	norm attains a finite maximum there.
\end{proof}

\subsection{Regularity of the control}
The regularity of $u$ is now a matter of assembling the previous lemmas. The mechanism is
the following: the differentiated Goh condition is a linear constraint on the
value of the control, the strong Legendre condition forces the corresponding kernel to be
a line and fixes its orientation, and a bordered implicit function theorem turns this
pointwise selection into a smooth dependence on the state and the covector.

\begin{proposition}\label{prop:ucont}
	Assume that the Goh condition \eqref{eq:Goh_section} and the strong Legendre condition
	\eqref{eq:strongLeg} hold, and let $d\ge2$. Then, for every $t_*\in[0,1]$, there exists
	a unique $w_*\in S^{d-1}$ such that $(q(t_*),\lambda(t_*),w_*)\in\mathcal G$.
	Consequently, $u|_E$ admits a unique continuous extension $\bar u:[0,1]\to S^{d-1}$.
\end{proposition}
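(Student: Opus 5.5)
Fix $t_*\in[0,1]$. Existence of some $w_*$ with $(q(t_*),\lambda(t_*),w_*)\in\mathcal G$ follows from density of $E$ and compactness. Since $E$ has full measure it is dense in $[0,1]$, so we can pick $t_n\in E$ with $t_n\to t_*$. Then $(q(t_n),\lambda(t_n))\to(q(t_*),\lambda(t_*))$ by continuity of the extremal, and $u(t_n)\in S^{d-1}$. By compactness of $S^{d-1}$ (equivalently of $\mathcal G$, Lemma~\ref{lem:essential-graph}), a subsequence of $u(t_n)$ converges to some $w_*$. The limit point then lies in $\mathcal G$.

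Uniqueness is where Lemma~\ref{lem:sign} enters. Suppose $w_*,w_*'$ both give points of $\mathcal G$ over $z_*:=(q(t_*),\lambda(t_*))$. By Lemma~\ref{lem:uniform-bordered-invertibility}, each satisfies $B(z_*)w=0$ and $T_{z_*}(w,y,y)=\langle y,R(z_*,w)y\rangle\ge c|y|^2$ for $y\in w^\perp$. Thus both meet the hypotheses of Lemma~\ref{lem:rank} at the same $z_*$. The uniqueness part of Lemma~\ref{lem:sign}, applied with $w_0=w_*$, says $w_*$ is the only unit vector in $\ker B(z_*)$ with positive $T_{z_*}(w,\cdot,\cdot)$ on $w^\perp$, so $w_*'=w_*$. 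This is the delicate point. A priori the fibre of $\mathcal G$ over $z_*$ could contain both $\pm w$, or $t\mapsto z(t)$ could pass twice through $z_*$ with different directions. The sign-fixing through coercivity handles both, because $\mathcal G$ is defined via the image point $z$, not via $t$. In particular no injectivity of $t\mapsto z(t)$ is needed.

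Define $\bar u(t_*):=w_*$. For $t\in E$ we have $(z(t),u(t))\in\mathcal G_0\subset\mathcal G$, so uniqueness gives $\bar u=u$ on $E$.

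For continuity, take $s_n\to s$ in $[0,1]$. Any limit point $w$ of $\bar u(s_n)\in S^{d-1}$ satisfies $(z(s),w)\in\mathcal G$, since $(z(s_n),\bar u(s_n))\in\mathcal G$ and $\mathcal G$ is closed. Uniqueness then forces $w=\bar u(s)$, and since $S^{d-1}$ is compact the whole sequence converges to $\bar u(s)$. Uniqueness of the continuous extension is immediate from the density of $E$.
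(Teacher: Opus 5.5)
Your proof is correct and follows essentially the same route as the paper's: existence via density of $E$ and compactness of $S^{d-1}$, uniqueness in the fibre over $z(t_*)$ via the limiting Goh and Legendre conditions plus the sign argument, then continuity via closedness of $\mathcal G$. The only difference is packaging: you cite Lemmas~\ref{lem:uniform-bordered-invertibility} and~\ref{lem:sign} for the limiting inequality and the sign selection, where the paper repeats those limit passages inline. Your remark that no injectivity of $t\mapsto z(t)$ is needed is correct.
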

\begin{proof}
	Let $t\in E$. Applying \eqref{eq:derpairing} with $Y=[X_i,X_j]$ to the identity
	$\langle\lambda(s),[X_i,X_j](q(s))\rangle=0$, valid for every $s$ by
	\eqref{eq:Goh_section}, we obtain
	$\langle\lambda(t),[X_{u(t)},[X_i,X_j]](q(t))\rangle=0$ for all $i,j$, which by
	\eqref{eq:Bmap} and \eqref{eq:Gohviatrilinear} reads
	\begin{equation}\label{eq:linearGoh}
		B(z(t))u(t)=0 .
	\end{equation}
	Together with \eqref{eq:strongLeg} and \eqref{eq:RviaT}, Lemma~\ref{lem:rank} yields
	\begin{equation}\label{eq:kernelline}
		\ker B(z(t))=\ker R(q(t),\lambda(t),u(t))=\Span\{u(t)\},\qquad t\in E.
	\end{equation}
	Fix $t_*$. Since $E$ is dense, choose $t_n\in E$ with $t_n\to t_*$. Up to a
	subsequence, $u(t_n)\to w_*\in S^{d-1}$, and continuity of $z$ gives
	$(z(t_*),w_*)\in\mathcal G$.

	Let $w_*,\widetilde w_*\in S^{d-1}$ be such that $(z(t_*),w_*)$ and
	$(z(t_*),\widetilde w_*)$ belong to $\mathcal G$, and choose sequences
	$\tau_n,\sigma_n\in E$ with $(z(\tau_n),u(\tau_n))\to(z(t_*),w_*)$ and
	$(z(\sigma_n),u(\sigma_n))\to(z(t_*),\widetilde w_*)$. Passing to the limit in
	\eqref{eq:linearGoh} gives $B(z(t_*))w_*=B(z(t_*))\widetilde w_*=0$. For
	$y\in w_*^\perp$, set $y_n=y-\langle y,u(\tau_n)\rangle u(\tau_n)$; then $y_n\to y$ and the
	strong Legendre inequality passes to the limit:
	\begin{equation}\label{eq:limitLegendre}
		T_{z(t_*)}(w_*,y,y)\ge c|y|^2,\qquad y\in w_*^\perp .
	\end{equation}
	Lemma~\ref{lem:rank} gives $\ker B(z(t_*))=\Span\{w_*\}$, hence
	$\widetilde w_*=\pm w_*$. The same limiting inequality for $\widetilde w_*$ excludes
	the negative sign, because $d\ge2$ and $T_z(-w_*,y,y)=-T_z(w_*,y,y)<0$ for non-zero
	$y\in w_*^\perp$. Thus every fiber is a singleton.

	Define $\bar u(t)$ as its unique element; if $t\in E$, then $\bar u(t)=u(t)$. If
	$t_n\to t_*$ and a subsequence $\bar u(t_{n_k})\to w$, closedness of $\mathcal G$
	implies $(z(t_*),w)\in\mathcal G$, hence $w=\bar u(t_*)$. Compactness of the sphere then
	implies convergence of the whole sequence, so $\bar u$ is continuous.
\end{proof}

From now on we identify $u$ with its continuous representative $\bar u$; note that, by
continuity and by Lemma~\ref{lem:uniform-bordered-invertibility}, the Goh and strong
Legendre conditions now hold at every $t\in[0,1]$, and
$\big(q(t),\lambda(t),u(t)\big)\in\mathcal G$ for every $t\in[0,1]$. For future reference
we record the precise form of the quantitative implicit function theorem needed below;
its proof is classical, see \cite{KP2013} and \cite[Theorem~15.1]{Deimling1985}.

\begin{lemma}[Quantitative implicit-function lemma]\label{lem:quantitative-IFT}
	Let $P,Y,Z$ be finite dimensional normed spaces, let $\Phi:P\times Y\to Z$ be $C^2$,
	and let $K\subset P\times Y$ be compact with $\Phi\equiv0$ on $K$. Assume that
	$D_y\Phi$ is invertible at every point of $K$, with
	$\sup_K\|D_y\Phi^{-1}\|\le M$. If $D_y\Phi$ is $L$-Lipschitz and $D_p\Phi$ is bounded
	by $N$ on a compact neighborhood of $K$, then any $r,\rho>0$ satisfying
	\[
		ML(r+\rho)\le\frac12,\qquad MNr\le\frac\rho2 ,
	\]
	and remaining inside that neighborhood, have the following property: for each
	$(p_*,y_*)\in K$ and each $p\in B_r(p_*)$, the equation $\Phi(p,y)=0$ has a unique
	solution in $B_\rho(y_*)$, depending smoothly on $p$.
\end{lemma}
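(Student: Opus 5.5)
The plan is the standard contraction-mapping proof of the implicit function theorem, run at a fixed base point $(p_*,y_*)\in K$ with constants that depend only on $M$, $L$ and $N$; this is what makes $r$ and $\rho$ uniform over $K$. Fix $(p_*,y_*)\in K$ and write $A:=D_y\Phi(p_*,y_*)$. By hypothesis $A$ is invertible with $\|A^{-1}\|\le M$. For $p\in B_r(p_*)$ define
\[
	T_p(y):=y-A^{-1}\Phi(p,y),\qquad y\in \overline{B_\rho(y_*)} .
\]
Zeros of $\Phi(p,\cdot)$ in the ball are exactly the fixed points of $T_p$. We assume $r,\rho$ are small enough that $B_r(p_*)\times \overline{B_\rho(y_*)}$ lies in the compact neighbourhood of $K$ on which the Lipschitz and boundedness constants hold, which is the phrase ``remaining inside that neighborhood'' in the statement.

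\emph{Contraction.} For $y,y'$ in the ball, the mean value theorem gives
\[
	T_p(y)-T_p(y')=A^{-1}\int_0^1\big(A-D_y\Phi(p,y'+s(y-y'))\big)\,ds\,(y-y').
\]
Since $D_y\Phi$ is $L$-Lipschitz, $\|A-D_y\Phi(p,\cdot)\|\le L(r+\rho)$ throughout the product of balls. Hence $\|T_p(y)-T_p(y')\|\le ML(r+\rho)\|y-y'\|\le\frac12\|y-y'\|$.

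\emph{Self-map.} Since $\Phi(p_*,y_*)=0$ and $\|D_p\Phi\|\le N$, we have $\|T_p(y_*)-y_*\|=\|A^{-1}\Phi(p,y_*)\|\le MNr\le\rho/2$. Combining with the contraction estimate, $\|T_p(y)-y_*\|\le\frac12\rho+\frac12\rho=\rho$. So $T_p$ maps the closed ball into itself, and Banach's fixed point theorem gives a unique zero $y(p)$ there. Strictly, this yields existence and uniqueness in the closed ball $\overline{B_\rho(y_*)}$. To get the statement for the open ball one can either read $B_\rho$ as closed, or note that the fixed point actually satisfies $\|y(p)-y_*\|\le 2\|T_p(y_*)-y_*\|\le 2MN|p-p_*|<\rho$ because $|p-p_*|<r$; either way the claim holds.

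\emph{Smoothness in $p$.} At every point of the product of balls, $\|A^{-1}(D_y\Phi-A)\|\le\frac12$, so by a Neumann series $D_y\Phi(p,y)$ is invertible with $\|D_y\Phi(p,y)^{-1}\|\le2M$. The classical implicit function theorem applied at each $(p,y(p))$ then produces a local $C^2$ solution branch. Local uniqueness in the ball forces this branch to coincide with $p\mapsto y(p)$, so $y$ is $C^2$, and $C^\infty$ whenever $\Phi$ is. In the paper's application $\Phi$ is built from the smooth map $(z,w)\mapsto(B(z)w,\,|w|^2-1)$, or its bordered variant with $\mathcal M$, so $y$ is smooth there.

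The only delicate point is uniformity. It comes from bounding $\|A^{-1}\|$ by $M$ at every point of $K$, which is exactly what Lemma~\ref{lem:uniform-bordered-invertibility} supplies in the intended use, and from the Lipschitz and $D_p$ bounds being global on a single compact neighbourhood. No step depends on the particular base point, so the same $r,\rho$ work for all of $K$.
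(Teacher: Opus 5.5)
Your proof is correct, and it is the classical contraction-mapping argument that the paper does not write out but cites (Krantz--Parks and Deimling's Theorem~15.1): you use $T_p(y)=y-D_y\Phi(p_*,y_*)^{-1}\Phi(p,y)$, with contraction and self-map constants depending only on $M$, $L$, $N$, which is what makes $r,\rho$ uniform over $K$. The extra points you check are appropriate: that the fixed point lies in the open ball, via $\|y(p)-y_*\|\le 2MN|p-p_*|<\rho$, and that the solution is only as regular as $\Phi$ ($C^2$ in general, smooth for the paper's $\Psi(q,\lambda,w,\mu)=\bigl(R(q,\lambda,w)w+\mu w,\frac12(|w|^2-1)\bigr)$, not $(B(z)w,|w|^2-1)$ as your side remark suggests, though this does not affect the proof).
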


\begin{proposition}\label{prop:smooth}
	Under the Goh condition \eqref{eq:Goh_section} and the strong Legendre condition
	\eqref{eq:strongLeg}, the continuous representative of $u$ is smooth on $[0,1]$.
\end{proposition}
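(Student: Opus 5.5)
The plan is a bootstrap. I first show that $u$ is a smooth function of the state--costate pair, $u(t)=\Psi(z(t))$, and then use the fact that $z$ solves an ODE driven by $u$.

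The first step is to build $\Psi$. Define
\[
\Phi:T^*M\times(\R^d\times\R)\to\R^d\times\R,\qquad \Phi(z;w,\alpha):=\Big(R(z,w)\,w+\alpha w,\ \tfrac12(|w|^2-1)\Big).
\]
Here $R(z,w)$ is linear in $w$, so the first component is quadratic in $w$. By Lemma~\ref{lem:essential-graph}, $\Phi(z_*;w_*,0)=0$ on the compact set $K:=\{(z_*;w_*,0):(z_*,w_*)\in\mathcal G\}$.

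The derivative in $(w,\alpha)$ at a point of $K$ is $(\delta w,\delta\alpha)\mapsto\big(R(z_*,\delta w)w_*+R(z_*,w_*)\delta w+\delta\alpha\,w_*,\ w_*^T\delta w\big)$. This is not yet the bordered matrix $\mathcal M$, because of the extra term $R(z_*,\delta w)w_*$. I will handle that term using the differentiated Goh condition. Since $B(z_*)w_*=0$, we have
\[
(R(z_*,\delta w)w_*)_i=T(\delta w,e_i,w_*)=-T(e_i,\delta w,w_*).
\]
Two alternatives are available. The first is to replace the first component of $\Phi$ by the equation $B(z)w=0$, projected appropriately, so that the linearization becomes exactly $\mathcal M$. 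The second is to check directly that the full derivative is invertible by the same argument as in Lemma~\ref{lem:sign}. I would try the first: use $\Phi(z;w,\alpha)=(R(z,w)^{T}w-R(z,w)w+\dots)$, or simply solve the system in which the unknown $w$ enters linearly in $R(z,w_*)w$, with $w_*$ frozen, and update by a fixed point.

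This step is the main obstacle: extracting an invertible linearization from the nonlinear selection. The cleanest fallback is the following. Because $\ker B(z)=\Span\{w_*\}$ and $B(z)$ has constant rank $d-1$ near $K$, for $z$ near the compact set $\{z(t)\}$ the operator $B(z)$ has a one-dimensional kernel with a smooth local basis. One way to see this is to take an invertible $(d-1)\times(d-1)$ minor of $B(z)$ and use Cramer's rule; equivalently, the kernel is the eigenline of the smallest eigenvalue of $B(z)^TB(z)$, which is simple with a uniform spectral gap by compactness and so depends smoothly on $z$. Normalizing, and fixing the sign by the continuous requirement $T_z(w,y,y)>0$ from Lemma~\ref{lem:sign}, yields a smooth map $\Psi$ on a neighbourhood $\mathcal U$ of $\Gamma$. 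By Lemma~\ref{lem:quantitative-IFT} and the uniform bound on $\mathcal M^{-1}$ from Lemma~\ref{lem:uniform-bordered-invertibility}, this neighbourhood can be taken uniform. Proposition~\ref{prop:ucont} together with uniqueness then gives $u(t)=\Psi(z(t))$ for all $t$: at every $t$ both are unit kernel vectors of $B(z(t))$ with the positive orientation.

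The second step is the bootstrap. The pair $z(t)=(q(t),\lambda(t))$ solves the Hamiltonian system $\dot z=\vec h_{u(t)}(z)$, where $h_w(z)=\langle\lambda,X_w(q)\rangle$; this follows from \eqref{eq:covectcurve}. Substituting $u=\Psi(z)$ gives the autonomous system $\dot z=\vec h_{\Psi(z)}(z)$ with smooth right-hand side. The pair $z$ is absolutely continuous, and $u$ is continuous, so $z\in C^1$. Inductively, if $z\in C^k$ then $u=\Psi\circ z\in C^k$, hence $\dot z\in C^k$ and $z\in C^{k+1}$. Thus $z$ and $u$ are $C^\infty$ on $[0,1]$, which proves Proposition~\ref{prop:smooth}.
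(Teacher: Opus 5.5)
Your bootstrap step is fine and matches the paper. The problems are in how you build the smooth selection $\Psi$.

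\textbf{The obstacle in your primary route does not exist.} To rewrite the extra term you used antisymmetry of the \emph{inner} bracket. Use antisymmetry of the \emph{outer} one instead:
$T_{z_*}(\delta w,e_i,w_*)=-\langle\lambda_*,[X_{w_*},[X_{\delta w},X_i]](q_*)\rangle$.
By \eqref{eq:Gohviatrilinear} this is a linear combination of the entries of $B(z_*)w_*$, so it vanishes by Lemma~\ref{lem:essential-graph}. Hence $D_{(w,\alpha)}\Phi=\mathcal M(z_*,w_*)$ on $K$. The argument then closes as follows:
\begin{itemize}
\item Lemmas~\ref{lem:uniform-bordered-invertibility} and~\ref{lem:quantitative-IFT} give smooth local branches.
\item Pairing the first block with $w$ and using $R(z,w)^Tw=0$ gives $\alpha=0$.
\item Local uniqueness identifies $u(t)$ with the branch.
\end{itemize}
This is exactly the paper's proof.

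\textbf{Your fallback, as written, rests on a false claim.} For $d\ge3$, $B(z)$ does not have constant rank $d-1$ near $\Gamma$; the claim holds only for $d=2$. Rank is merely lower semicontinuous, so near $\Gamma$ you only get $\dim\ker B(z)\le1$. Since $B(z)$ maps $\R^d$ into a space of dimension $d(d-1)/2\ge d$, nothing prevents it from being injective off the curve. For $d=3$ it is a square matrix, singular along $\Gamma$, and nothing forces its determinant to vanish nearby. So ``$B(z)$ has a one-dimensional kernel near $\Gamma$'' and ``equivalently, the kernel is the eigenline'' are wrong.

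\textbf{The construction survives once you drop the kernel interpretation off the curve.}
\begin{itemize}
\item The smallest eigenvalue of $B(z)^TB(z)$ is simple at each $z(t_0)$: its eigenspace there is $\ker B(z(t_0))$, a line by Lemma~\ref{lem:uniform-bordered-invertibility}.
\item Simplicity is an open condition, so the unit eigenvector normalized by $\langle\Psi(z),u(t_0)\rangle>0$ is smooth near $z(t_0)$.
\item On the curve it lies in $\ker B(z(t))=\Span\{u(t)\}$, so continuity of $u$ forces $\Psi(z(t))=u(t)$ for $t$ near $t_0$.
\end{itemize}
The kernel of a fixed $(d-1)\times d$ submatrix of $B$ of rank $d-1$ at $z(t_0)$ works equally well. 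Your appeal to Lemma~\ref{lem:quantitative-IFT} and to $\mathcal M^{-1}$ is then beside the point; compactness and the spectral gap suffice.

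\textbf{With that correction you have a genuinely different route.} The paper solves the \emph{determined} system $R(z,w)w=0$, $|w|=1$. One equation is redundant because $w^TR(z,w)w=0$, which is why the multiplier appears and then vanishes. Its linearization along the curve is the bordered Legendre matrix, and this gives branches with uniform radii, defined off the curve by an intrinsic equation. Your route regularizes the overdetermined equation $B(z)w=0$ by least squares. It uses only \eqref{eq:onedim} along the curve and the continuity from Proposition~\ref{prop:ucont}, not the bordered invertibility. The price is that the selection off the curve has no geometric meaning.
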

\begin{proof}
	Define
	\begin{equation}\label{eq:Psi}
		\Psi(q,\lambda,w,\mu)=
		\begin{pmatrix}R(q,\lambda,w)w+\mu w\\[1mm]\frac12(|w|^2-1)\end{pmatrix}.
	\end{equation}
	Along the extremal, $\Psi(q(t),\lambda(t),u(t),0)=0$. Writing
	$F(q,\lambda,w)=R(q,\lambda,w)w$, so that $F_i(q,\lambda,w)=T_z(w,e_i,w)$, and
	differentiating in $w$ along a direction $m\in\R^d$ we obtain
	\[
		\big(D_wF(q,\lambda,w)m\big)_i\Big|_{w=u(t)}
		=T_{z(t)}(m,e_i,u(t))+\big(R(q(t),\lambda(t),u(t))m\big)_i .
	\]
	The first summand equals $-\left\langle\lambda(t),[X_{u(t)},[X_m,X_i]](q(t))\right\rangle$
	and vanishes by the differentiated Goh condition \eqref{eq:linearGoh}; therefore
	\begin{equation}\label{eq:jacPsi}
		D_{(w,\mu)}\Psi(q(t),\lambda(t),u(t),0)=
		\begin{pmatrix}R(q(t),\lambda(t),u(t))&u(t)\\ u(t)^T&0\end{pmatrix}
		=\mathcal M(q(t),\lambda(t),u(t)).
	\end{equation}
	By Lemma~\ref{lem:uniform-bordered-invertibility} these matrices are invertible with
	uniformly bounded inverse on the compact essential graph. Applying
	Lemma~\ref{lem:quantitative-IFT} with $p=(q,\lambda)$ and $y=(w,\mu)$ we obtain radii
	$r,\rho>0$, independent of the base point, and unique smooth local branches
	$(w_{\zeta_*},\mu_{\zeta_*})$. Taking the scalar product of the first block of
	$\Psi=0$ with $w$, and using $R(z,w)^Tw=0$ from \eqref{eq:T2} together with $|w|=1$, we
	get $\mu_{\zeta_*}=0$.

	Continuity of $(z(t),u(t))$ and local uniqueness imply, near every $t_0$,
	\begin{equation}\label{eq:localbranch}
		u(t)=w_{t_0}(z(t)),
	\end{equation}
	and on overlaps both branches coincide with the already fixed continuous
	representative. Since the state-costate equations
	\[
		\begin{cases}
			\dot q(s)=X_{u(s)}(q(s)),\\[1mm]
			\dot\lambda(s)=-\,\partial_q\left\langle\lambda(s),X_{u(s)}(q(s))\right\rangle ,
		\end{cases}
	\]
	i.e.\ the lift to $T^*M$ of the flow of $X_{u}$,
	have continuous right-hand side, $z=(q,\lambda)\in C^1$, and \eqref{eq:localbranch}
	gives $u\in C^1$. If $u\in C^k$, the state-costate equations give $z\in C^{k+1}$ and
	the smooth branch gives $u\in C^{k+1}$. Induction proves smoothness.
\end{proof}

\begin{remark}[Higher rank versus niceness]\label{rmk:niceanalogy}
	Proposition~\ref{prop:smooth} is the higher-rank counterpart of the regularity
	statements available for nice abnormal extremals of rank-two distributions
	\cite{AS95},\cite[Chapter~12]{ABB20}. As explained in Subsection~\ref{ssec:nice}, for
	$d=2$ the niceness condition \eqref{eq:nice} is equivalent to $B(z)\neq0$ and hence,
	$\Lambda^2(\R^2)^*$ being one-dimensional, to \eqref{eq:onedim}. For $d\ge3$ the two
	requirements part company: $B(z)\neq0$ is one scalar condition, whereas
	\eqref{eq:onedim} is the maximality $\rank B(z)=d-1$. What the present section shows is
	that the strong Legendre condition delivers \eqref{eq:onedim} for free, and with it the
	whole regularity theory, without any assumption on the flag $\Delta\subset\Delta^2
	\subset\Delta^3$.
\end{remark}

\subsection{The adapted frame and the adapted chart}\label{ssec:adapted}
Now that $u$ is known to be smooth, we normalize the geometric data once and for all. The
following lemma collects, in a single statement, the choice of the horizontal frame, of
the chart, of the dual coframe and of the uniform bounds; every constant appearing in the
rest of the paper is a supremum taken over the compact sets produced here, and is
therefore independent of $t$.

\begin{lemma}[Adapted normalization]\label{lem:adapted}
	There exist $t_0\in(0,1]$, open sets $\mathcal O_1\subset\subset\mathcal O\subset M$
	containing the arc $\Gamma_{t_0}:=q([0,t_0])$, a chart
	$\kappa:\mathcal O\to\Omega\subset\R^n$, and a smooth orthonormal frame
	$X_1,\dots,X_d$ of $\Delta$ on $\mathcal O$, with the following properties.
	\begin{itemize}
		\item[(i)] (Straightening)
		$X_1(q(s))=\dot q(s)$ and $u(s)=e_1$ for every $s\in[0,t_0]$.
		\item[(ii)] (Coframe) There are covectors
		$\eta_1(t),\dots,\eta_d(t)\in T^*_{q(t)}M$, depending continuously on
		$t\in[0,t_0]$, with
		\begin{equation}\label{eq:coframe}
			\left\langle\eta_k(t),X_j(q(t))\right\rangle=\delta_{kj},
			\qquad j,k=1,\dots,d,
			\qquad
			C_\eta:=\sup_{t\in[0,t_0]}\max_{1\le i\le d}|\eta_i(t)|<\infty .
		\end{equation}
		\item[(iii)] (Affine abnormal coordinate) Setting
		$\kappa_t:=\kappa-\kappa(q(t))$ and
		$\ell_t:=\lambda(t)\circ d_{\kappa(q(t))}\kappa^{-1}\in(\R^n)^*$, the function
		\begin{equation}\label{eq:alinear}
			a_t:\mathcal O\to\R,\qquad a_t(x):=\left\langle\ell_t,\kappa_t(x)\right\rangle
		\end{equation}
		satisfies $a_t(q(t))=0$, $d_{q(t)}a_t=\lambda(t)$ and
		$D^k_y\left(a_t\circ\kappa_t^{-1}\right)\equiv0$ for every $k\ge2$; moreover
		$C_\ell:=\sup_{t\in[0,t_0]}|\ell_t|<\infty$.
		\item[(iv)] (Uniform bounds) Let $\mathcal B:=\kappa(\overline{\mathcal
		O_1})$, a compact subset of $\Omega$. Every supremum of the coordinate expressions
		of the vector fields $X_i$, of their transported versions
		$X^\tau_i:=e^{(t-\tau)X_1}_*X_i$, and of all their partial derivatives with respect
		to $x$ and $\tau$ of total order at most three, taken over $\mathcal B$ and over
		$\tau,t\in[0,t_0]$, is finite.
		\item[(v)] (Localization) Set
		\[
			L_0:=\sqrt d\,\max_{1\le i\le d}\sup_{\tau,t\in[0,t_0],\,x\in\mathcal B}
			\left|X_i^\tau(x)\right|,
			\qquad
			\delta:=\inf_{s\in[0,t_0]}\operatorname{dist}\big(\kappa(q(s)),
			\R^n\setminus\kappa(\mathcal O_1)\big).
		\]
		Then $\delta>0$ and $2L_0t_0<\delta$.
	\end{itemize}
\end{lemma}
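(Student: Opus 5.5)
We construct the objects in the order frame, chart, coframe, bounds, localization, shrinking $t_0$ whenever needed. For (i), Proposition~\ref{prop:smooth} gives $u\in C^\infty([0,1],S^{d-1})$. We then rotate the orthonormal frame by a smooth family of orthogonal matrices. First extend $u$ to a smooth map $U:\mathcal O\to S^{d-1}$ on a neighborhood of the arc, for instance $U(x)=u(\pi(x))$, where $\pi$ is a smooth retraction onto the embedded arc $q([0,t_0])$. The arc is embedded for small $t_0$, since $\dot q(0)=X_{u(0)}(q)\neq0$. Next choose a smooth $O(d)$-valued map $A(x)$ whose first column is $U(x)$. This is possible locally because the map $O(d)\to S^{d-1}$ is a fibration and the arc is contractible; Gram--Schmidt applied to $(U,e_2,\dots,e_d)$ works after permuting the $e_i$ to avoid degeneracy. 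Set $\widetilde X_j:=\sum_i A_{ij}X_i$. This frame is still orthonormal, and $\widetilde X_1(q(s))=X_{u(s)}(q(s))=\dot q(s)$, so in the new frame $u(s)=e_1$. The Goh and strong Legendre conditions are frame-independent up to the orthogonal change, since $Q^t$ transforms by the isometry $A$. They therefore persist, with the same constant $c$ once lower-order terms are checked to vanish by Goh.

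For (ii), the vectors $X_j(q(t))$ are linearly independent and depend continuously on $t$. We complete them to a basis of $T_{q(t)}M$ with continuous vectors $Y_{d+1},\dots,Y_n$, for example coordinate fields chosen transverse to $\Delta$ near $q(0)$ after shrinking $t_0$. We then take the dual basis. Its first $d$ elements are the $\eta_k$, which are continuous on the compact interval $[0,t_0]$, and this gives $C_\eta<\infty$.

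For (iii), take any chart $\kappa$ on a neighborhood $\mathcal O$ of $q(0)$, and shrink $t_0$ so that the arc lies in $\mathcal O$. Then $a_t\circ\kappa_t^{-1}(y)=\langle\ell_t,y\rangle$ is linear in $y$, so all derivatives of order $k\ge2$ vanish identically. We have $a_t(q(t))=0$, and by the chain rule $d_{q(t)}a_t=\ell_t\circ d\kappa=\lambda(t)$. Continuity of $\lambda$ and of $d\kappa^{-1}$ gives $C_\ell<\infty$.

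For (iv), choose $\mathcal O_1\subset\subset\mathcal O$ to be an open neighborhood of the arc with compact closure, so that $\mathcal B$ is compact. The map $(\tau,t,x)\mapsto e^{(t-\tau)X_1}_*X_i(x)$ is smooth wherever the flow of $X_1$ for times $|t-\tau|\le t_0$ keeps $\overline{\mathcal O_1}$ inside $\mathcal O$. This holds for small $t_0$ by compactness and smooth dependence of flows on time and initial data. All derivatives of order at most three are then continuous on the compact set $[0,t_0]^2\times\mathcal B$, hence bounded.

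For (v), the main obstacle is circularity: $L_0$ depends on $t_0$ and on $\mathcal O_1$, while $\delta$ depends on $\mathcal O_1$ and on the arc. We first fix $\mathcal O_1$ using a provisional $t_1$, taking a tubular neighborhood of the arc $q([0,t_1])$ of coordinate radius $2\delta_1>0$. Then $L_0(t_0)\le L_0(t_1)$ for every $t_0\le t_1$, because the suprema are taken over smaller sets. Similarly $\delta(t_0)\ge\delta_1>0$, since the infimum is taken over a sub-arc and the distance is positive by compactness of the arc inside the open set $\kappa(\mathcal O_1)$. Choosing $t_0<\min\{t_1,\delta_1/(2L_0(t_1))\}$ then gives $2L_0t_0<\delta$, and all earlier properties survive this final shrinking.
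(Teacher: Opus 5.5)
Your proposal is correct and follows essentially the same route as the paper. You extend $u$ off the embedded arc, build an orthonormal horizontal frame whose first element restricts to $\dot q$, note that $a_t\circ\kappa_t^{-1}=\langle\ell_t,\cdot\rangle$ is linear, and get (iv)--(v) by compactness plus the monotonicity of $L_0$ and $\delta$ in $t_0$ once $\mathcal O_1$ is fixed. The only differences are cosmetic and equally valid: you obtain the frame through a smooth $O(d)$-valued lift $A$ with first column $U$ rather than by normalizing $\widetilde Y$ and completing, and you obtain the coframe by completing with coordinate fields and dualizing rather than by Gram--Schmidt for an auxiliary metric (later only $\langle\eta_k(t),X_j(q(t))\rangle=\delta_{kj}$ and continuity in $t$ are used).
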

\begin{proof}
	Since $|u(t)|=1$ and $X_1,\dots,X_d$ are linearly independent,
	$\dot q(t)=X_{u(t)}(q(t))\neq 0$; after decreasing $t_0>0$, the arc $q|_{[0,t_0]}$ is a
	smooth embedded curve, contained in a contractible coordinate neighborhood
	$\mathcal O$ with chart $\kappa$. Extend the coefficients of $u$ off the arc by
	$\widetilde u_i(x):=u(s)$ if $x$ lies on the slice of a tubular neighborhood through
	$q(s)$, put $\widetilde Y:=\sum_i\widetilde u_iX_i$ and, after shrinking $\mathcal O$
	so that $\widetilde Y\neq0$,
	\[
		Y_1:=\frac{\widetilde Y}{\|\widetilde Y\|_g} ,
	\]
	a smooth unit horizontal field with $Y_1(q(s))=\dot q(s)$. Completing $Y_1$ to a smooth
	orthonormal frame $Y_1,\dots,Y_d$ of $\Delta$ on the contractible set $\mathcal O$ and
	relabelling $Y_i$ as $X_i$ gives (i). Since $X_1(q(t)),\dots,X_d(q(t))$ are linearly
	independent, the covectors in (ii) exist; they are not unique when $d<n$, and we fix
	once and for all the choice obtained by Gram--Schmidt orthonormalization of
	$X_1(q(t)),\dots,X_d(q(t))$ with respect to an auxiliary Riemannian metric on $M$,
	declaring $\eta_k(t)$ to vanish on the corresponding orthogonal complement; this choice
	is continuous in $t$, whence $C_\eta<\infty$ by compactness of $[0,t_0]$. Item (iii)
	is immediate: $\kappa_t$ is a translation of $\kappa$, so $d\kappa_t=d\kappa$ and
	$a_t\circ\kappa_t^{-1}=\left\langle\ell_t,\cdot\right\rangle$ is linear; continuity of
	$t\mapsto\ell_t$ gives $C_\ell<\infty$. Finally, choose $\mathcal O_1$ with
	$\Gamma_{t_0}\subset\mathcal O_1\subset\subset\mathcal O$ and decrease $t_0$ once more
	so that all the flow segments occurring below stay in $\overline{\mathcal O_1}$; all
	the data in (iv) are smooth functions on the compact set
	$\mathcal B\times[0,t_0]\times[0,t_0]$, hence bounded there. As for (v),
	$\delta>0$ because $s\mapsto\kappa(q(s))$ is continuous on a compact interval with
	values in the open set $\kappa(\mathcal O_1)$; moreover, when $t_0$ decreases $L_0$ does
	not increase and $\delta$ does not decrease, so that $2L_0t_0<\delta$ is obtained by
	decreasing $t_0$ once more.
\end{proof}

\begin{remark}[Change of frame]\label{rmk:frame}
	The frame of Lemma~\ref{lem:adapted} differs from the one used so far by a smooth
	$O(d)$-valued function $\mathcal R$ on $\mathcal O$: a horizontal curve $\gamma$ with
	control $\tilde u$ in the old frame has control $\mathcal R(\gamma)^T\tilde u$ in the
	new one. This correspondence is a smooth change of variables between neighborhoods of
	$u$ in $L^2$ which preserves pointwise the norm of the control, hence the length, the
	energy and the end-point. Consequently the image of the differential of the end-point
	map, and the covectors orthogonal to it, do not depend on the frame. That the Goh
	condition and the Legendre form are not affected either is the content of
	Lemma~\ref{lem:frameinv} below, since the brackets of the new fields contain
	derivatives of the coefficients of $\mathcal R$.
\end{remark}
\begin{lemma}[Invariance under change of frame]\label{lem:frameinv}
	Let $X_1,\dots,X_d$ be the frame used so far, let $Y_a=\sum_{i}\mathcal R_{ia}X_i$,
	$a=1,\dots,d$, be the frame of Lemma~\ref{lem:adapted}, and let $q\in\mathcal O$,
	$\lambda\in T^*_qM$ with $\lambda\perp\Delta_q$ and $\lambda\perp\Delta^2_q$. Then,
	writing $\mathcal R=\mathcal R(q)$,
	\[
		\langle\lambda,[Y_a,Y_b](q)\rangle
		=\sum_{i,j}\mathcal R_{ia}\mathcal R_{jb}\,\langle\lambda,[X_i,X_j](q)\rangle,
	\]
	\[
		\langle\lambda,[[Y_a,Y_b],Y_c](q)\rangle
		=\sum_{i,j,k}\mathcal R_{ia}\mathcal R_{jb}\mathcal R_{kc}\,
		\langle\lambda,[[X_i,X_j],X_k](q)\rangle .
	\]
	In particular, \eqref{eq:Goh_section} and \eqref{eq:strongLeg} hold in the frame
	$Y_1,\dots,Y_d$ with the same constant $c$.
\end{lemma}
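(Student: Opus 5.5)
The plan is a direct computation with the Leibniz rule for Lie brackets of vector fields multiplied by functions. Write $Y_a=\sum_i\mathcal R_{ia}X_i$ with $\mathcal R_{ia}\in C^\infty(\mathcal O)$. Then
\[
	[Y_a,Y_b]=\sum_{i,j}\mathcal R_{ia}\mathcal R_{jb}[X_i,X_j]
	+\sum_{i,j}\mathcal R_{ia}\big(X_i\mathcal R_{jb}\big)X_j
	-\sum_{i,j}\mathcal R_{jb}\big(X_j\mathcal R_{ia}\big)X_i ,
\]
so $[Y_a,Y_b]=\sum\mathcal R_{ia}\mathcal R_{jb}[X_i,X_j]+H_{ab}$, where $H_{ab}$ is a horizontal field, i.e.\ a section of $\Delta$. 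Pairing with $\lambda\perp\Delta_q$ kills $H_{ab}(q)$, and this gives the first identity.

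For the triple bracket I would expand
\[
	[[Y_a,Y_b],Y_c]=\sum_{i,j}\big[\mathcal R_{ia}\mathcal R_{jb}[X_i,X_j],Y_c\big]+[H_{ab},Y_c].
\]
The term $[H_{ab},Y_c]$ is a bracket of two sections of $\Delta$. By Leibniz it equals a combination of brackets $[X_k,X_l]$ with smooth coefficients plus a horizontal field, so it takes values in $\Delta^2$ and is annihilated by $\lambda\perp\Delta^2_q$. For the first term, write $Y_c=\sum_k\mathcal R_{kc}X_k$ and apply Leibniz again. This gives
\[
	\sum\mathcal R_{ia}\mathcal R_{jb}\mathcal R_{kc}[[X_i,X_j],X_k]
	+\sum\mathcal R_{ia}\mathcal R_{jb}\big([X_i,X_j]\mathcal R_{kc}\big)X_k
	-\sum\mathcal R_{kc}\,X_k\!\big(\mathcal R_{ia}\mathcal R_{jb}\big)[X_i,X_j].
\]
The second summand lies in $\Delta$ and the third in $\Delta^2$, so both vanish after pairing with $\lambda$. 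This yields the second identity.

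The key point, and the only place care is needed, is that the derivatives of $\mathcal R$ only appear as coefficients of fields in $\Delta^2$. It is therefore essential that $\lambda$ annihilate all of $\Delta^2_q$, which is exactly the Goh condition together with the PMP, and not just the brackets in the old frame.

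For the final assertion, apply the identities at $(q(t),\lambda(t))$. Along the extremal $\lambda(t)\perp\Delta_{q(t)}$ holds by the PMP (orthogonality to $\im d_0G$ contains the $X_i$), and $\lambda(t)\perp\Delta^2_{q(t)}$ holds by Goh. The first identity then transports Goh to the new frame. The second identity says that the trilinear form $T_z$ in the new frame is $T_z(\mathcal R a,\mathcal R b,\mathcal R c)$ in the old one. The new control is $\mathcal R^Tu$, so the new Legendre form is $Q^t_{\mathrm{new}}(w)=T_{z(t)}(u(t),\mathcal Rw,\mathcal Rw)$. Since $\mathcal R$ is orthogonal, $w\perp\mathcal R^Tu(t)$ is equivalent to $\mathcal Rw\perp u(t)$, and $|\mathcal Rw|=|w|$. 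Hence the coercivity constant $c$ is unchanged.
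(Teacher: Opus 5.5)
Your proposal is correct and follows essentially the same route as the paper. Both arguments use a Leibniz-rule expansion showing that every term containing derivatives of $\mathcal R$ is a section of $\Delta$ or of $\Delta^2$, hence is annihilated by $\lambda$, and then use the orthogonality of $\mathcal R(q(t))$ to carry over Goh and the constant $c$; the paper merely phrases the first step as $C^\infty$-multilinearity of $\langle\lambda,[A,B](q)\rangle$ and $\langle\lambda,[[A,B],C](q)\rangle$ in sections of $\Delta$, and uses $\mathcal R(q(t))e_1=u(t)$ where you use the general relation $u_{\mathrm{new}}=\mathcal R^Tu$.
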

\begin{proof}
	Let $A,B,C$ be smooth sections of $\Delta$ and $f$ a smooth function. Then
	\[
		[fA,B]=f[A,B]-(Bf)A,\qquad
		[[A,B],fC]=f[[A,B],C]+([A,B]f)\,C,
	\]
	\[
		[[fA,B],C]=f[[A,B],C]-(Cf)[A,B]-(Bf)[A,C]+(C(Bf))\,A ,
	\]
	and the case of $fB$ follows from $[[A,fB],C]=-[[fB,A],C]$. Since $\lambda$ annihilates
	$\Delta_q$ and $[A,B](q),[A,C](q)\in\Delta^2_q$, all the correction terms vanish after
	pairing with $\lambda$. Hence $\langle\lambda,[A,B](q)\rangle$ and
	$\langle\lambda,[[A,B],C](q)\rangle$ are multilinear over $C^\infty$ functions in the
	sections of $\Delta$, i.e.\ they depend only on the values of $A,B,C$ at $q$, and the
	two formulas follow by expanding $Y_a=\sum_i\mathcal R_{ia}X_i$. By
	\eqref{eq:Goh_section} and the Pontryagin Maximum Principle the hypothesis on
	$\lambda=\lambda(t)$ holds for every $t$, so the first formula gives the Goh condition
	for the $Y$'s. Moreover $Y_1(q(t))=\dot q(t)=X_{u(t)}(q(t))$ gives
	$\mathcal R(q(t))e_1=u(t)$, so that, by the second formula and the orthogonality of
	$\mathcal R$, for $w\perp e_1$,
	$\langle\lambda(t),[[Y_1,Y_w],Y_w](q(t))\rangle
	=Q^t\big(\mathcal R(q(t))w\big)\ge c\,|\mathcal R(q(t))w|^2=c|w|^2$, because
	$\mathcal R(q(t))w\perp u(t)$.
\end{proof}
From now on $X_1,\dots,X_d$ denotes the frame of Lemma~\ref{lem:adapted}.

\begin{remark}[How the chart is used]\label{rmk:chart}
	Three features of Lemma~\ref{lem:adapted} are used repeatedly, and it is worth
	isolating them once, so as not to return to the matter again.
	\begin{itemize}
		\item[(a)] The parameter $t$ only moves the origin:
		$\kappa_t=\kappa-\kappa(q(t))$. Since translations do not affect derivatives, all
		coordinate derivatives may be computed in the single fixed chart $\kappa$, and all
		the resulting bounds are uniform in $t$. In the coordinates $\kappa_t$ the set
		$\mathcal B$ is translated by $-\kappa(q(t))$ and, by Lemma~\ref{lem:adapted}(v), the
		open ball of radius $\delta$ centred at the origin is contained in the translated
		image of $\mathcal O_1$, hence in the translated set $\mathcal B$. From now on we identify points of
		$\mathcal O$ with their coordinates through $\kappa_t$; in particular
		$q(t)$ becomes the origin of $\R^n$, $G^t_{q(t)}(\bv)$ is regarded as a vector of
		$\R^n$, and $a_t(x)=\left\langle\ell_t,x\right\rangle$.
		\item[(b)]The abnormal coordinate $a_t$ is affine, so its derivatives of order $\ge2$ vanish identically.
		Every nonlinear term in the expansions below comes from the coordinate expressions
		of the transported fields and of their flows, never from $a_t$.
		\item[(c)] Every constant denoted by $C$, $K$, $\Lambda$, $L$ or
		$\delta$, with or without subscripts, is a supremum of the type described in
		Lemma~\ref{lem:adapted}(iv); it depends only on the reference extremal, on the
		adapted frame and coframe, and on $t_0$, and never on $t\in(0,t_0]$ nor on the
		variation.
	\end{itemize}
\end{remark}

\begin{remark}[Geometry of the adapted normalization]\label{rmk:geomstraight}
No equiregularity assumption is used in this construction. The frame is adapted only to the reference direction $u$, and not to the flag $\Delta \subseteq \Delta^2 \subseteq \cdots$, while $\kappa$ is an arbitrary smooth chart, rather than a system of privileged coordinates.
	
    Item (i) of Lemma~\ref{lem:adapted} makes the reference control constant, equal
	to $e_1$. All the geometry is thereby transferred to the frame, and the Goh condition
	acquires a transparent meaning: $\lambda(t)$ annihilates $\Delta_{q(t)}$ by the PMP and
	$\Delta^2_{q(t)}$ by \eqref{eq:Goh_section}, so the abnormal codirection is invisible to
	first and second order along $\Delta$, and the leading term it sees is the
	length-three bracket $\left[[X_{e_1},X_w],X_w\right]$ --- precisely the Legendre form.
	Coercivity of $Q^t$ is thus a uniform transversality requirement of the arc with respect
	to the flag $\Delta\subset\Delta^2\subset\Delta^3$, in all the $d-1$ directions
	orthogonal to the motion.
\end{remark}

From now on we systematically split every vector of $\R^d=\R\oplus\R^{d-1}$ into its
first component and the remaining ones, denoting the full split object by a boldface
letter:
\begin{equation}\label{eq:boldnotation}
	\bw=(w_1,w)\in\R\oplus\R^{d-1},
	\quad\bv=(v_1,v),
	\quad\bV(\tau)=\big(V_1(\tau),V(\tau)\big):=\int_0^\tau\bv(\theta)\,d\theta ,
\end{equation}
so that $\bv\in L^2([0,t],\R^d)$ is an admissible variation of the reference control,
$v_1$ and $V_1$ are its first component and the corresponding primitive, and
$v=(v_2,\dots,v_d)$, $V=(V_2,\dots,V_d)$ collect the components transverse to the
reference direction. The reference control itself is $u=(1,0)$. Plain letters always
denote the transverse $(d-1)$-dimensional part, boldface letters the full
$d$-dimensional object. With this choice of frame the strong Legendre condition becomes
\begin{equation}\label{eq:slg1}
	Q^t(w)=\left\langle\lambda(t),\left[\dot X_w,X_w\right](q(t))\right\rangle\ge c|w|^2,
	\qquad w\in\R^{d-1},\quad t\in[0,t_0],
\end{equation}
where we used the identification $u(t)^\perp=\{0\}\oplus\R^{d-1}$, valid for every $t$.
Notice that only at this point are we entitled to pass from an a.e.\ statement to one
holding for every $t$, since we have shown that $u$ is smooth.

\subsection{Reduction to the admissible set}
Our objective is to prove that there exists $t>0$ such that $u|_{[0,t]}$ is a minimizer
of the length functional
\begin{equation}\label{eq:length}
	\Len:L^2([0,t],\R^d)\longrightarrow\R,
	\qquad
	\Len(\tilde u)=\int_0^t|\tilde u(\tau)|\,d\tau
\end{equation}
within the set of admissible curves $\gamma_{\tilde u}$ with
$F_q^t(\tilde u)=F_q^t(u)=q(t)$, and that its minimizers are exactly the
reparametrizations of $u|_{[0,t]}$. Fix $t\in(0,t_0]$, set $u^t:=u|_{[0,t]}$ and let
$\mathcal U\subset L^2([0,t],\R^d)$ be the set of controls on which $F^t_q$ is well
defined. Since we are interested in minimizers, it is not restrictive to consider only
competitors with $\Len(\tilde u)\le\Len(u^t)=t$. Moreover $\Len(\tilde u)>0$ for every
competitor: otherwise $\tilde u=0$ almost everywhere, $\gamma_{\tilde u}$ is constant and
$F^t_q(\tilde u)=q\neq q(t)$, because the arc $q|_{[0,t_0]}$ is embedded.

Let $\varphi:[0,t]\to[0,t]$ be the nondecreasing absolutely continuous surjection
\begin{equation}\label{eq:timepar}
	\varphi(\tau):=\frac{t}{\Len(\tilde u)}\int_0^\tau|\tilde u(\theta)|\,d\theta ,
\end{equation}
and let $\varphi^{\#}(s):=\min\{\tau\in[0,t]:\varphi(\tau)=s\}$ be its generalized
inverse, which is strictly increasing and satisfies $\varphi\circ\varphi^\#=\mathrm{id}$.
\begin{lemma}[Constant-speed reparametrization]\label{lem:reparam}
	Let $\gamma_{\tilde u}$ be an admissible curve on $[0,t]$ with $\Len(\tilde u)>0$, and
	set $\eta:=\gamma_{\tilde u}\circ\varphi^\#$. Then $\gamma_{\tilde u}=\eta\circ\varphi$,
	$\eta$ is an admissible curve with the same end-points as $\gamma_{\tilde u}$, and its
	control $\tilde u_\varphi$ satisfies $|\tilde u_\varphi|=\Len(\tilde u)/t$ a.e.\ on
	$[0,t]$.
\end{lemma}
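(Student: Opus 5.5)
The plan is to verify each claim directly from the definitions of $\varphi$ and $\varphi^\#$, using only absolute continuity and the standard change-of-variables formula for monotone absolutely continuous maps.

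\emph{Step 1: the identity $\gamma_{\tilde u}=\eta\circ\varphi$.} This amounts to showing that $\gamma_{\tilde u}$ is constant on every level set of $\varphi$. By monotonicity, $\varphi^{-1}(s)$ is a closed interval $[\varphi^\#(s),\tau_+]$. On this interval $\int|\tilde u|=0$, so $\tilde u=0$ a.e.\ there, and therefore $\dot\gamma_{\tilde u}=X_{\tilde u}(\gamma_{\tilde u})=0$ a.e.\ there. Since $\gamma_{\tilde u}$ is absolutely continuous, it is constant on the interval. Hence $\gamma_{\tilde u}(\tau)=\gamma_{\tilde u}(\varphi^\#(\varphi(\tau)))=\eta(\varphi(\tau))$.

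The end-points then follow at once. We have $\varphi^\#(0)=0$. Since $\varphi(t)=t$, the level set at $t$ contains $t$, so $\gamma_{\tilde u}(\varphi^\#(t))=\gamma_{\tilde u}(t)$ by the constancy just proved.

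\emph{Step 2: $\eta$ is Lipschitz.} Lipschitz continuity is the main point, because $\varphi^\#$ itself may jump or fail to be absolutely continuous. For $s<s'$, take the sub-Riemannian distance in a chart, or simply integrate along the curve. Since $\varphi(\varphi^\#(s))=s$, we get
\[
d\big(\eta(s),\eta(s')\big)\le\int_{\varphi^\#(s)}^{\varphi^\#(s')}|\tilde u|\,d\theta=\frac{\Len(\tilde u)}{t}\,(s'-s).
\]
This bound holds in any Riemannian metric extending $g$, up to a constant coming from the frame. So $\eta$ is Lipschitz, in particular $W^{1,2}$, and it is differentiable a.e.

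\emph{Step 3: horizontality and the speed.} Here I would use the chain rule for the composition of a Lipschitz map with an absolutely continuous monotone one: $\dot\gamma_{\tilde u}(\tau)=\dot\eta(\varphi(\tau))\dot\varphi(\tau)$ for a.e.\ $\tau$. To justify this I would invoke the change-of-variables theorem: the set of $\tau$ where $\eta$ fails to be differentiable at $\varphi(\tau)$ has $\int|\tilde u|=0$, which suffices because on $\{\dot\varphi=0\}$ we already have $\tilde u=0$.

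Now let $N\subset[0,t]$ be a null set. Then $\varphi^{-1}(N)\cap\{\dot\varphi>0\}$ is null, and $\varphi$ maps $\{\dot\varphi>0\}$ onto $[0,t]$ up to a null set. Hence for a.e.\ $s$ there is a $\tau$ with $\varphi(\tau)=s$, $\dot\varphi(\tau)=\tfrac{t}{\Len(\tilde u)}|\tilde u(\tau)|>0$, and $\dot\gamma_{\tilde u}(\tau)=X_{\tilde u(\tau)}$. At such a point, $\dot\eta(s)=X_{\tilde u_\varphi(s)}(\eta(s))$ with
\[
\tilde u_\varphi(s)=\frac{\Len(\tilde u)}{t}\,\frac{\tilde u(\tau)}{|\tilde u(\tau)|}.
\]
This is well defined up to null sets: uniqueness of the control for a given tangent vector (the frame is independent) ensures that different choices of $\tau$ agree. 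Therefore $\eta$ is admissible, and $|\tilde u_\varphi|=\Len(\tilde u)/t$ a.e.

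The measure-theoretic justification of the a.e.\ chain rule, through the area formula for monotone absolutely continuous maps, is the only delicate step. Everything else is direct.
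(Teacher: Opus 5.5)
Your proposal is correct. Step~1 matches the paper; from Step~2 on you take a different route.

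\textbf{The paper's route.} It shows that $\eta$ is Lipschitz with constant $\Len(\tilde u)/t$ with respect to $d_{SR}$. It then invokes the metric characterization of horizontal curves: a $d_{SR}$-Lipschitz curve is admissible, and the norm of its control is its metric speed. This gives admissibility together with only the one-sided bound $|\tilde u_\varphi|\le\Len(\tilde u)/t$. Equality a.e.\ comes from a squeeze. Length can only decrease under the two monotone compositions $\eta=\gamma_{\tilde u}\circ\varphi^\#$ and $\gamma_{\tilde u}=\eta\circ\varphi$, so $\int_0^t|\tilde u_\varphi|=\Len(\tilde u)$.

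\textbf{Your route.} You only need $\eta$ to be Lipschitz for some Riemannian or chart distance; the sub-Riemannian distance is never used. You obtain horizontality by hand from the pointwise chain rule at points $\tau\in\varphi^{-1}(s)$ with $\dot\varphi(\tau)>0$. This produces the explicit formula
\[
\tilde u_\varphi(\varphi(\tau))=\frac{\Len(\tilde u)}{t}\,\frac{\tilde u(\tau)}{|\tilde u(\tau)|},
\]
and the speed identity follows directly, with no squeeze.

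\textbf{What each buys.} The paper's argument is shorter and purely metric, but it relies on a cited structural theorem of sub-Riemannian geometry. Yours is self-contained at the level of real analysis and more informative, since it identifies the reparametrized control. The cost is the fine properties of monotone absolutely continuous maps:
\begin{itemize}
\item the Lusin (N) property, which you need to discard the null set of bad $\tau$'s inside $\{\dot\varphi>0\}$ (points where $\dot\gamma_{\tilde u}\neq X_{\tilde u}(\gamma_{\tilde u})$ or Lebesgue differentiation fails); this is only implicit in your phrase ``up to a null set'';
\item the fact that $\varphi(\{\dot\varphi=0\})$ is null;
\item the identity $\int_{\varphi^{-1}(B)}\dot\varphi=|B|$ for Borel $B$.
\end{itemize}

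\textbf{Two small points.} You should record that $\tilde u_\varphi=P(\eta)\dot\eta$, with $P$ a smooth left inverse of the frame, which gives measurability of the new control at once. If your Riemannian metric extends $g$, the Lipschitz constant in Step~2 is exactly $\Len(\tilde u)/t$, with no extra frame constant.
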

\begin{proof}
	On every interval where $\varphi$ is constant one has $\tilde u=0$ a.e., hence
	$\gamma_{\tilde u}$ is constant there. Since $\varphi$ is constant on
	$[\varphi^\#(\varphi(\tau)),\tau]$, we get
	$\gamma_{\tilde u}(\tau)=\gamma_{\tilde u}(\varphi^\#(\varphi(\tau)))=\eta(\varphi(\tau))$,
	and evaluating at $\tau=0,t$ gives the claim on the end-points. Let $d_{SR}$ be the
	sub-Riemannian distance. For $0\le s_1\le s_2\le t$ and $a_i:=\varphi^\#(s_i)$ we have
	$a_1\le a_2$ and
	\[
		d_{SR}\big(\eta(s_1),\eta(s_2)\big)\le\ell\big(\gamma_{\tilde u}|_{[a_1,a_2]}\big)
		=\int_{a_1}^{a_2}|\tilde u|=\frac{\Len(\tilde u)}{t}\big(\varphi(a_2)-\varphi(a_1)\big)
		=\frac{\Len(\tilde u)}{t}(s_2-s_1).
	\]
	Thus $\eta$ is Lipschitz with respect to $d_{SR}$; hence it is admissible, and the norm
	of its control is its metric speed, so that $|\tilde u_\varphi|\le\Len(\tilde u)/t$ a.e.\
	(see e.g.\ \cite{ABB20}). Finally, $\eta=\gamma_{\tilde u}\circ\varphi^\#$ and
	$\gamma_{\tilde u}=\eta\circ\varphi$ are compositions with monotone maps, under which the
	length can only decrease; hence $\ell(\eta)=\ell(\gamma_{\tilde u})=\Len(\tilde u)$,
	i.e.\ $\int_0^t|\tilde u_\varphi|=\Len(\tilde u)$. Together with
	$|\tilde u_\varphi|\le\Len(\tilde u)/t$ on an interval of length $t$, this forces
	equality a.e.
\end{proof}
The reduction to curves with a.e.\ constant speed allows us to restrict the search for
competitors to controls $\bv=(v_1,v)$ such that
\begin{equation}\label{eq:conepointwise}
	\big(1+v_1(\tau)\big)^2+|v(\tau)|^2\le1,
	\qquad\text{a.e. }\tau\in[0,t],
\end{equation}
whence in particular
\be\label{eq:cone}
	v_1(\tau)\le0,\qquad\text{a.e. }\tau\in[0,t].
\ee

\begin{remark}[The admissible set]\label{rmk:cone}
	Constraint \eqref{eq:conepointwise} says that $u+\bv$ takes values in the closed unit
	ball, i.e.\ that the competitor is no longer than the reference arc at every instant;
	geometrically, $\bv(\tau)$ ranges in the closed unit ball centred at $-u$, which is
	tangent at the origin to the hyperplane $\{v_1=0\}$. Its linearization at the origin is
	the half-space \eqref{eq:cone}. The set of admissible variations is therefore convex
	and bounded in $L^\infty$, but it is neither a linear subspace nor a cone, since it is
	not invariant under multiplication by positive scalars: this is what makes all the
	estimates below uniform, rather than asymptotic as $\bv\to0$.
\end{remark}

Finally, in these coordinates the time-$t$ end-point map becomes
\be\label{eq:SLCEndpoint}
	\begin{aligned}
		F^t_{q}(u+\bv)&=q\circ\eexp\int_0^t\big(1+v_1(\tau)\big)X_1+X_{v(\tau)}\,d\tau\\
		&=q(t)\circ\eexp\int_0^tv_1(\tau)X_1+X^\tau_{v(\tau)}\,d\tau=G_{q(t)}^t(\bv),
	\end{aligned}
\ee
with $X^\tau:=e^{(t-\tau)X_1}_*X$; equivalently
$X^\tau_{\bv(\tau)}=\sum_{i=1}^dv_i(\tau)X^\tau_i$ with $X^\tau_1\equiv X_1$. Since
$\frac{d}{ds}(e^{sX_1})_*X=-[X_1,(e^{sX_1})_*X]$ and $s=t-\tau$, we have
$\dot X^\tau=[X_1,X^\tau]$, the relation used throughout. For future
purposes we record the following structural property of the adapted frame:
\begin{equation}\label{eq:X1_identity}
	\dot X_1^\tau=[X_1,X_1^\tau]=[X_1,X_1]\equiv0 .
\end{equation}

\section{The reduced Hessian and its coercivity}\label{sec:hessian}

\subsection{Reduction of the Hessian}
The aim of this section is to prove that the projected Hessian
$\left\langle\lambda(t),d_0^2G_{q(t)}^t(\bv)\right\rangle$ is coercive, on the admissible
set and for short times, with respect to a norm strong enough to control the nonlinear
remainders. As observed in \eqref{eq:X1_identity}, $\dot X_1^\tau=0$, so that the first
component drops out of the derivative:
\[
	\dot X^\tau_{\bV(\tau)}=\sum_{j=2}^dV_j(\tau)\dot X_j^\tau
	=\sum_{j=2}^dV_j(\tau)[X_1,X_j^\tau]=\dot X^\tau_{V(\tau)} .
\]
Moreover, the relation
$\left\langle\lambda(t),X_j^\tau(q(t))\right\rangle\equiv0$ holds for every
$\tau\in[0,t]$ by abnormality, and can therefore be differentiated twice in $\tau$. Since
$\dot X_j^\tau=[X_1,X_j^\tau]$, we find
\begin{equation}\label{eq:cruc_vanishing}
	\frac{d^2}{d\tau^2}\left\langle\lambda(t),X_j^\tau(q(t))\right\rangle
	=\left\langle\lambda(t),[X_1,\dot X_j^\tau](q(t))\right\rangle=0 ,
\end{equation}
where the last equality holds because the function is identically zero. Its first
derivative equals $\langle\lambda(t),\dot X_j^\tau(q(t))\rangle
=\langle\lambda(\tau),[X_1,X_j](q(\tau))\rangle$ by the transport argument of
Lemma~\ref{lem:transport}, and vanishes by \eqref{eq:Goh_section}.
Hence every occurrence of $V_1$ disappears from \eqref{eq:sdif}: its first summand is
annihilated by the Goh condition, while in the other two $V_1$ can only appear through
$X_1=X^\tau_1$ inside a bracket with some $\dot X_j^\tau$ (recall $\dot X_1^\tau=0$), which
vanishes by \eqref{eq:cruc_vanishing}. The bracket part of the projected Hessian
reduces to
\begin{equation}\label{eq:hessGnoV1}
	\begin{aligned}
		\mathcal Q^t(\bv)&:=\int_{\Sigma_2(t)}\left\langle\lambda(t),
		\left[X^{\tau_2}_{\bv(\tau_2)},X^{\tau_1}_{\bv(\tau_1)}\right](q(t))
		\right\rangle d\tau_2d\tau_1\\
		&=\int_0^t\left\langle\lambda(t),
		\left[\dot X^{\tau}_{V(\tau)},X^{\tau}_{V(\tau)}\right](q(t))\right\rangle d\tau\\
		&+\int_0^t\left\langle\lambda(t),
		\left[X_{V(t)}+\int_0^{\tau}\dot X^{\theta}_{V(\theta)}\,d\theta,\,
		\dot X^{\tau}_{V(\tau)}\right](q(t))\right\rangle d\tau .
	\end{aligned}
\end{equation}
Two issues arise from \eqref{eq:hessGnoV1}. First, the right-hand side depends on the
variation only through the pair
$\left(V(t),V\right)\in\R^{d-1}\oplus L^2([0,t],\R^{d-1})$: what the strong Legendre
condition can deliver is therefore a coercivity for the $L^2$-norm of the
primitives, a property in general too weak to control the full expansion of
$G^t_{q(t)}(\bv)$ in terms of the $L^2$-norm of $\bv$. Second, the contribution of the
first coordinate --- completely invisible to the Hessian, but relevant at higher orders
--- has to be recovered by other means. The results below are the analytic tools that
overcome both difficulties.

Fix on $H^1([0,t],\R^d)$ the scalar product
\begin{equation}\label{eq:scalprodendpointL2}
	\left\langle x,y\right\rangle_{\mathcal H}
	=\left\langle x(t),y(t)\right\rangle_{\R^d}
	+\int_0^t\left\langle x(\tau),y(\tau)\right\rangle_{\R^d}d\tau
\end{equation}
and the corresponding induced norm
\begin{equation}\label{eq:Hnorm}
	\|x\|_{\mathcal H}^2=|x(t)|^2+\|x\|_{L^2([0,t],\R^d)}^2 .
\end{equation}

\begin{remark}[What $\|\cdot\|_{\mathcal H}$ measures]\label{rmk:Hnorm}
	The norm \eqref{eq:Hnorm} is evaluated on the primitive $\bV$ of the variation,
	not on the variation itself; it is therefore a weak, $H^{-1}$-type norm on $\bv$, and
	it is exactly the norm in which the reduced Hessian \eqref{eq:hessGnoV1} is bounded.
	The boundary term $|x(t)|^2$ cannot be dispensed with: \eqref{eq:hessGnoV1} genuinely
	depends on $V(t)$, and this is the analytic trace of the fact that the end-point is
	fixed.
\end{remark}

\begin{proposition}\label{prop:completion}
	The completion of $H^1([0,t],\R^d)$ with respect to $\|\cdot\|_{\mathcal H}$ is
	isometric to $\mathcal K:=\R^d\oplus L^2([0,t],\R^d)$.
\end{proposition}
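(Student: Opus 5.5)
The plan is to realize the completion concretely through the map
\[
	\iota:H^1([0,t],\R^d)\longrightarrow\mathcal K,\qquad \iota(x):=\big(x(t),x\big),
\]
where $\mathcal K=\R^d\oplus L^2([0,t],\R^d)$ carries the product Hilbert norm $|a|^2+\|f\|^2_{L^2}$. By \eqref{eq:Hnorm}, $\iota$ is a linear isometry from $(H^1,\|\cdot\|_{\mathcal H})$ into the Hilbert space $\mathcal K$. Since $\mathcal K$ is complete, the completion of $H^1$ is isometric to the closure $\overline{\iota(H^1)}$. It therefore suffices to prove that $\iota(H^1)$ is dense in $\mathcal K$; the isometric extension of $\iota$ is then the required isometry onto $\mathcal K$.

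For density, fix $(a,f)\in\mathcal K$ and $\varepsilon>0$. First, approximate $f$ in $L^2$ by a smooth function $g\in C^\infty_c((0,t),\R^d)$, which is possible by the density of compactly supported smooth functions. This $g$ satisfies $g(t)=0$. Next, correct the endpoint value with a thin ramp: for small $h>0$ let
\[
	\chi_h(\tau):=\max\{0,1-(t-\tau)/h\},
\]
which is Lipschitz, equals $1$ at $\tau=t$, and has $\|\chi_h\|_{L^2}^2=h/3$. Set $x:=g+a\chi_h\in H^1$. Then $x(t)=a$ exactly, and
\[
	\|x-f\|_{L^2}\le\|g-f\|_{L^2}+|a|\sqrt{h/3}.
\]
Both terms can be made smaller than $\varepsilon/2$, so $\|\iota(x)-(a,f)\|_{\mathcal K}<\varepsilon$.

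The only point needing care is conceptual rather than technical. The functional $x\mapsto x(t)$ is not continuous for the $L^2$ part of the norm alone, which is exactly why the limit object keeps an independent $\R^d$ component instead of collapsing to $L^2$. The ramp construction is the step that makes this visible: it decouples the endpoint value from the bulk at arbitrarily small $L^2$ cost. Beyond that, the argument only needs the standard identification of a completion with the closure of an isometric image in a complete space.
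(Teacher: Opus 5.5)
Your proof is correct and follows essentially the same route as the paper. Both use the isometric embedding $x\mapsto(x(t),x)$ into $\mathcal K$, and both obtain density by approximating $f$ with $C_c^\infty((0,t),\R^d)$ functions, which vanish at $t$, and then restoring the endpoint value with a cut-off concentrated near $t$. The only difference is cosmetic: you use the Lipschitz ramp $\chi_h$ with $\|\chi_h\|_{L^2}^2=h/3$, while the paper uses a smooth cut-off $\psi_n$ supported in $[t-1/n,t]$; both lie in $H^1$ and have $L^2$-norm tending to zero.
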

\begin{proof}
	Equip $\mathcal K$ with the product Hilbert norm
	$\|(a,f)\|_{\mathcal K}^2=|a|^2+\|f\|_{L^2}^2$. The linear map
	\[
		\Phi:H^1([0,t],\R^d)\longrightarrow\mathcal K,\qquad\Phi(x)=(x(t),x),
	\]
	is by construction an isometry from
	$\big(H^1([0,t],\R^d),\|\cdot\|_{\mathcal H}\big)$ into the complete Hilbert space
	$\mathcal K$; it therefore suffices to prove that its image is dense. Let
	$(a,f)\in\mathcal K$. Since $C_c^\infty\big((0,t),\R^d\big)$ is dense in
	$L^2([0,t],\R^d)$, choose $g_n\in C_c^\infty\big((0,t),\R^d\big)$ with
	$\|g_n-f\|_{L^2}\to0$; compactness of the supports guarantees $g_n(t)=0$ for every $n$.
	For $n>1/t$ pick a cut-off $\psi_n\in C^\infty([0,t])$ with $\psi_n(t)=1$,
	$\psi_n(\tau)=0$ for $\tau\le t-\frac1n$ and $0\le\psi_n\le1$, and set
	$x_n(\tau):=g_n(\tau)+\psi_n(\tau)a$. Then
	$x_n\in C^\infty([0,t],\R^d)\subset H^1([0,t],\R^d)$ and $x_n(t)=a$, so that
	$|x_n(t)-a|=0$ for every $n$. As for the $L^2$-component, the triangle inequality gives
	$\|x_n-f\|_{L^2}\le\|g_n-f\|_{L^2}+\|\psi_na\|_{L^2}$, where the first term vanishes in
	the limit while
	\[
		\|\psi_na\|_{L^2}^2=|a|^2\int_{t-1/n}^t|\psi_n(\tau)|^2d\tau
		\le\frac{|a|^2}{n}\xrightarrow[n\to\infty]{}0 .
	\]
	Therefore $\|\Phi(x_n)-(a,f)\|_{\mathcal K}\to0$, and since $\Phi$ is an isometry onto
	a dense subspace of the complete space $\mathcal K$, the claim follows.
\end{proof}

By \eqref{eq:cruc_vanishing} the quadratic form \eqref{eq:hessGnoV1} does not depend on
the first component of the variation. Denoting by $a=(a_2,\dots,a_d)\in\R^{d-1}$ and
$f=(f_2,\dots,f_d)\in L^2([0,t],\R^{d-1})$ the last $d-1$ components of an element of
$\mathcal K$, we set
\begin{equation}\label{eq:extensHessian}
	\begin{aligned}
		\widehat{H^t}(a,f)
		&:=\int_0^t\left[\dot X^{\tau}_{f(\tau)},X^{\tau}_{f(\tau)}\right](q(t))\,d\tau\\
		&+\int_0^t\left[X_a+\int_0^{\tau}\dot X^{\theta}_{f(\theta)}\,d\theta,\,
		\dot X^{\tau}_{f(\tau)}\right](q(t))\,d\tau\ \in\ T_{q(t)}M ,
	\end{aligned}
\end{equation}
and we write $\lambda(t)\widehat{H^t}(a,f)$ for its scalar projection along the abnormal
covector, so that $\mathcal Q^t(\bv)=\lambda(t)\widehat{H^t}\big(V(t),V\big)$.

\begin{lemma}\label{lem:boundedextension}
	The quadratic mapping $\widehat{H^t}$ is bounded on $\mathcal K$: there exists $K>0$,
	independent of $t\in(0,t_0]$, such that
	\[
		\left|\lambda(t)\widehat{H^t}(a,f)\right|
		\le K\left(|a|^2+\|f\|_{L^2([0,t],\R^{d-1})}^2\right).
	\]
	Consequently, \eqref{eq:hessGnoV1} extends uniquely to a continuous quadratic form on
	$\mathcal K$, which coincides with \eqref{eq:extensHessian}.
\end{lemma}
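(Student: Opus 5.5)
The bound is a term-by-term estimate of the three integrals in \eqref{eq:extensHessian}, using only Cauchy--Schwarz, the fact that $t\le t_0\le1$, and the uniform bounds of Lemma~\ref{lem:adapted}(iv). By linearity in the vector argument,
\[
X^\tau_{f(\tau)}=\sum_{j\ge2}f_j(\tau)X^\tau_j,\qquad \dot X^\tau_{f(\tau)}=\sum_{j\ge2}f_j(\tau)[X_1,X^\tau_j].
\]
Each bracket evaluated at $q(t)$ and paired with $\lambda(t)$ is then a bilinear expression in the coefficients. We first set
\[
K_0:=\sup_{\tau,\theta,t\in[0,t_0]}\ \max_{i,j}\ \max_{Y,Z}\ \big|\langle\lambda(t),[Y,Z](q(t))\rangle\big|,
\]
where $Y$ ranges over $\{X_i,X_i^\theta,[X_1,X_i^\theta]\}$ and $Z$ over $\{X_j^\tau,[X_1,X_j^\tau]\}$. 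In coordinates, $\lambda(t)$ is $\ell_t$, and $|\ell_t|\le C_\ell$. The brackets involve at most two derivatives of the transported fields, so they are bounded on $\mathcal B$ by Lemma~\ref{lem:adapted}(iv). Hence $K_0<\infty$, and it does not depend on $t$.

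The three terms are estimated in turn.
\begin{itemize}
\item[(1)] The first integrand is bounded by $(d-1)K_0|f(\tau)|^2$. Its integral is therefore at most $(d-1)K_0\|f\|_{L^2}^2$.
\item[(2)] The boundary term $\int_0^t\langle\lambda(t),[X_a,\dot X^\tau_{f(\tau)}]\rangle\,d\tau$ is bounded by $(d-1)K_0|a|\int_0^t|f|$. By Cauchy--Schwarz this is at most $(d-1)K_0|a|\sqrt t\,\|f\|_{L^2}$, and Young's inequality gives at most $\tfrac{d-1}{2}K_0(|a|^2+\|f\|_{L^2}^2)$.
\item[(3)] For the double integral we write it as $\int_0^t\int_0^\tau f_i(\theta)f_j(\tau)\,\beta_{ij}(\theta,\tau)\,d\theta\,d\tau$ with $|\beta_{ij}|\le K_0$. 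It is then bounded by $(d-1)K_0\big(\int_0^t|f|\big)^2\le(d-1)K_0\,t\,\|f\|_{L^2}^2$.
\end{itemize}
Summing, $K:=2(d-1)K_0$ works for every $t\in(0,t_0]$.

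The only point needing care is that the third term be well defined for $f\in L^2$, not just for smooth $f$. Since $\theta\mapsto\dot X^\theta_{f(\theta)}(x)$ is integrable with values in the smooth vector fields, the inner integral $\int_0^\tau\dot X^\theta_{f(\theta)}\,d\theta$ is a vector field whose coefficients and first derivatives on $\mathcal B$ are bounded by $C\int_0^\tau|f|$. Its bracket with $\dot X^\tau_{f(\tau)}$ therefore makes sense pointwise and is jointly measurable. I expect this measurability check to be the only (mild) obstacle; the rest is routine.

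For the extension statement, the computation above shows that $\lambda(t)\widehat{H^t}$ is a bounded quadratic form on $\mathcal K$. It is the diagonal of the symmetric bilinear form obtained by polarization, and that bilinear form satisfies the same bound, up to a factor. Hence $\lambda(t)\widehat{H^t}$ is continuous on $\mathcal K$. By \eqref{eq:hessGnoV1}, it agrees with $\mathcal Q^t$ on the image $\{(V(t),V)\}$ of $H^1$ under the isometry $\Phi$ of Proposition~\ref{prop:completion}, restricted to the transverse components. That image is dense by Proposition~\ref{prop:completion}. A continuous quadratic form is determined by its values on a dense set, which gives both the uniqueness of the extension and its identification with \eqref{eq:extensHessian}.
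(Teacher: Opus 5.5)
Your proposal is correct and follows the paper's proof essentially step for step: a uniform bound $K_0$ on the paired brackets from Lemma~\ref{lem:adapted}(iv), the same three term-by-term estimates (using $\|f\|_{L^1}\le\sqrt t\,\|f\|_{L^2}$, $t\le1$ and Young's inequality), and density via Proposition~\ref{prop:completion} for the extension. The one slip is arithmetic: your three bounds add up to $(d-1)K_0\big(\tfrac12|a|^2+(\tfrac32+t)\|f\|_{L^2}^2\big)$, so for $t$ up to $t_0\le1$ you need $K=\tfrac52(d-1)K_0$ rather than $2(d-1)K_0$ (the paper bounds operator norms instead of entries and gets $\tfrac52K_0$).
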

\begin{proof}
	All the vector fields $X^\tau_j$, $\dot X^\tau_j$ and their brackets, evaluated at
	$q(t)$ and paired with $\lambda(t)$, are continuous functions of
	$(t,\tau)\in[0,t_0]^2$, hence the bilinear forms on $\R^{d-1}\times\R^{d-1}$ defined by them have operator norms
	uniformly bounded by a constant $K_0>0$ by Lemma~\ref{lem:adapted}(iv). Estimating the three summands separately and using
	$\|f\|_{L^1}\le\sqrt t\,\|f\|_{L^2}\le\|f\|_{L^2}$, valid for $t\le1$, we obtain
	\[
		\left|\lambda(t)\widehat{H^t}(a,f)\right|
		\le K_0\|f\|_{L^2}^2+K_0|a|\,\|f\|_{L^1}+K_0\|f\|_{L^1}^2
		\le K\left(|a|^2+\|f\|_{L^2}^2\right)
	\]
	with $K:=\frac52K_0$, where we used
	$|a|\,\|f\|_{L^2}\le\frac12(|a|^2+\|f\|_{L^2}^2)$. The extension statement follows
	from Proposition~\ref{prop:completion} and the density of $H^1([0,t],\R^d)$ in
	$\mathcal K$.
\end{proof}

\subsection{The admissible set controls the strong norm}
\begin{lemma}\label{lem:conenorm}
	Let $\bv=(v_1,v)\in L^2([0,t],\R^d)$ satisfy \eqref{eq:conepointwise}. Then
	\begin{equation}\label{eq:pointwisecone}
		\left|\bv(\tau)\right|^2\le2\left|v_1(\tau)\right|\le4,
		\qquad\text{a.e. }\tau\in[0,t],
	\end{equation}
	and consequently
	\begin{equation}\label{eq:conenorm}
		\left\|\bv\right\|_{L^2([0,t],\R^d)}^2\le2\left|V_1(t)\right|
		\le2\left\|\bV\right\|_{\mathcal H},
		\qquad
		\left\|\bv\right\|_{L^1([0,t],\R^d)}\le2t .
	\end{equation}
\end{lemma}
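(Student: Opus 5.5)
The plan is to derive everything from the pointwise constraint \eqref{eq:conepointwise} by expanding the square and integrating.

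\textbf{Pointwise bound.} At a.e.\ $\tau$, expanding $(1+v_1)^2+|v|^2\le1$ gives
\[
2v_1+v_1^2+|v|^2\le0,\qquad\text{that is}\qquad |\bv(\tau)|^2=v_1^2+|v|^2\le-2v_1(\tau).
\]
By \eqref{eq:cone} we have $v_1\le0$, so $-2v_1=2|v_1|$. This is the first inequality in \eqref{eq:pointwisecone}. For the second one, note that $1+v_1\in[-1,1]$, so $v_1\in[-2,0]$ and $|v_1|\le2$.

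\textbf{Integrated bounds.} Integrate the pointwise bound over $[0,t]$. Since $v_1\le0$ a.e., we have $\int_0^t|v_1|=-\int_0^tv_1=-V_1(t)=|V_1(t)|$, and therefore
\[
\|\bv\|_{L^2}^2\le2|V_1(t)|.
\]
Next, $|V_1(t)|\le|\bV(t)|\le\|\bV\|_{\mathcal H}$ directly from the definition \eqref{eq:Hnorm}. Finally, the pointwise bound $|\bv|\le2$ integrated over an interval of length $t$ gives $\|\bv\|_{L^1}\le2t$.

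None of these steps is a real obstacle. The only point that needs care is the sign: identifying $\int_0^t|v_1|$ with $|V_1(t)|$ relies on \eqref{eq:cone}, so that $V_1$ is nonincreasing and nonpositive.
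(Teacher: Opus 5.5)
Your proposal is correct and follows the paper's proof essentially step by step: expand the constraint to get $|\bv|^2\le-2v_1=2|v_1|$, use $(1+v_1)^2\le1$ to get $-2\le v_1\le0$, then integrate and use the sign of $v_1$ to identify $\int_0^t|v_1|$ with $|V_1(t)|\le\|\bV\|_{\mathcal H}$. Your derivation of the $L^1$ bound from $|\bv|\le2$ pointwise is likewise the paper's argument.
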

\begin{proof}
	Expanding the constraint gives $|v(\tau)|^2\le-2v_1(\tau)-v_1(\tau)^2$, whence
	\[
		\left|\bv(\tau)\right|^2=v_1(\tau)^2+\left|v(\tau)\right|^2
		\le-2v_1(\tau)=2\left|v_1(\tau)\right| ,
	\]
	where we used \eqref{eq:cone}. Moreover $\big(1+v_1(\tau)\big)^2\le1$ forces
	$-2\le v_1(\tau)\le0$, which gives the second inequality in \eqref{eq:pointwisecone}
	and, in turn, $\|\bv\|_{L^1}\le2t$. Integrating \eqref{eq:pointwisecone} over $[0,t]$
	and using $V_1(0)=0$ together with $v_1\le0$ we obtain
	\[
		\left\|\bv\right\|_{L^2}^2\le2\int_0^t\left|v_1(\tau)\right|d\tau
		=-2\int_0^tv_1(\tau)\,d\tau=-2V_1(t)=2\left|V_1(t)\right| ,
	\]
	and $|V_1(t)|\le\|\bV\|_{\mathcal H}$ by the very definition of
	$\|\cdot\|_{\mathcal H}$.
\end{proof}

\begin{remark}\label{rmk:nonhomog}
	Estimate \eqref{eq:conenorm} is the structural fact on which the whole remainder
	analysis rests: on the admissible set the square of the strong $L^2$-norm of the
	control is dominated by the first power of the weak norm of its primitive. The
	inequality is of course not homogeneous -- the admissible set is not a linear subspace -- and
	this is precisely what renders a cubic remainder quadratic in $\|\cdot\|_{\mathcal H}$,
	with a small prefactor, rather than merely of order $3/2$. Geometrically, the admissible set
	forces a variation to ``pay'' in the direction $-e_1$, i.e.\ in length, for any
	transverse displacement it produces; \eqref{eq:conenorm} is the quantitative form of
	this trade-off.
\end{remark}

\subsection{The quadratic defect of the first differential}
Given an admissible variation $\bv=(v_1,v)\in L^2([0,t],\R^d)$ we denote by
$\tau\mapsto q_{\bv}(\tau)$ the solution of
\begin{equation}\label{eq:perturbedode}
	\dot q(\tau)=X^\tau_{\bv(\tau)}\big(q(\tau)\big),\qquad q(0)=0 ,
\end{equation}
written in the coordinates of Remark~\ref{rmk:chart}(a), so that
$G^t_{q(t)}(\bv)=q_{\bv}(t)$.
\begin{lemma}[Localization]\label{lem:localization}
	Let $t\in(0,t_0]$ and let $\bv\in L^2([0,t],\R^d)$ satisfy the constraint
	\eqref{eq:conepointwise}. Then the Cauchy problem \eqref{eq:perturbedode} has a unique
	absolutely continuous solution on $[0,t]$ and, with $L_0$ and $\delta$ as in
	Lemma~\ref{lem:adapted}(v),
	\begin{equation}\label{eq:noleave}
		\left|q_{\bv}(\tau)\right|\le2L_0t<\delta,\qquad\tau\in[0,t].
	\end{equation}
	In particular $q_{\bv}(\tau)$ stays in the translated set $\mathcal B$.
\end{lemma}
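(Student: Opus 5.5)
The plan is a standard bootstrap (continuity) argument for the ODE \eqref{eq:perturbedode}, using only the uniform bounds of Lemma~\ref{lem:adapted}(iv)--(v) and the pointwise bound \eqref{eq:pointwisecone} from Lemma~\ref{lem:conenorm}.

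\emph{Local existence and uniqueness.} The right-hand side $(\tau,x)\mapsto X^\tau_{\bv(\tau)}(x)=\sum_i v_i(\tau)X^\tau_i(x)$ is measurable in $\tau$ and smooth in $x$. On the compact set $\mathcal B$ (translated as in Remark~\ref{rmk:chart}(a)) it is Lipschitz in $x$, with a Lipschitz constant of the form $C|\bv(\tau)|$, where $C$ comes from Lemma~\ref{lem:adapted}(iv). Since $|\bv|\le2$ a.e. by \eqref{eq:pointwisecone}, this Lipschitz factor is integrable. Carathéodory's theorem therefore gives a unique absolutely continuous local solution starting at $0$. The solution can be continued on $[0,t]$ as long as it remains in the interior of $\kappa_t(\mathcal O_1)$.

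\emph{A priori bound.} While $q_{\bv}(\tau)$ stays in the translated $\mathcal B$, Cauchy--Schwarz gives
\[
	|X^\tau_{\bv(\tau)}(x)|\le|\bv(\tau)|\Big(\sum_i|X^\tau_i(x)|^2\Big)^{1/2}\le|\bv(\tau)|\,L_0,
\]
using the definition of $L_0$ with the factor $\sqrt d$. Integrating and applying $\|\bv\|_{L^1}\le2t$ from \eqref{eq:conenorm}, we obtain $|q_{\bv}(\tau)|\le L_0\int_0^\tau|\bv|\le2L_0t\le2L_0t_0<\delta$.

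\emph{Closing the bootstrap.} Let $\tau^*$ be the supremum of the times $s\le t$ such that the solution exists on $[0,s]$ and satisfies $|q_{\bv}|<\delta$ there. On $[0,\tau^*)$ the solution lies in the open ball $B_\delta(0)$, which is contained in the translated $\kappa(\mathcal O_1)$ and hence in the translated $\mathcal B$ by Remark~\ref{rmk:chart}(a). The a priori bound then gives $|q_{\bv}|\le2L_0t$, which is strictly less than $\delta$. Because the vector field is integrably bounded, the solution extends continuously to $\tau^*$ with the same bound. If $\tau^*<t$, local existence at $\tau^*$ together with continuity would let the solution continue past $\tau^*$ inside $B_\delta(0)$, contradicting maximality. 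Hence $\tau^*=t$, which proves \eqref{eq:noleave}, and uniqueness follows from the local Lipschitz property on $\mathcal B$.

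The only step that needs care is this continuation argument: the a priori bound holds only while the trajectory stays in $\mathcal B$, so the strict inequality $2L_0t_0<\delta$ from Lemma~\ref{lem:adapted}(v) is exactly what closes the loop.
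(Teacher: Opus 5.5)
Your proposal is correct and follows essentially the same route as the paper: local Carath\'eodory existence from the uniform bounds of Lemma~\ref{lem:adapted}(iv), the a priori estimate $|q_{\bv}(\tau)|\le L_0\|\bv\|_{L^1}\le 2L_0t<\delta$ valid while the trajectory stays in the ball of radius $\delta$, and a continuation argument closed by the strict inequality $2L_0t_0<\delta$ of Lemma~\ref{lem:adapted}(v). The only difference is that you spell out more explicitly the Cauchy--Schwarz step behind the constant $L_0$ and the bootstrap, which the paper leaves as a ``standard continuation argument.''
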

\begin{proof}
	The right-hand side of \eqref{eq:perturbedode} is measurable in $\tau$, smooth in $x$ on
	$\mathcal B$ with derivatives bounded uniformly by Lemma~\ref{lem:adapted}(iv), and
	bounded by $L_0|\bv(\tau)|$, so that a unique local solution exists. As long as the
	solution stays in the ball of radius $\delta$ centred at the origin, which is contained
	in the translated set $\mathcal B$ by Remark~\ref{rmk:chart}(a), Lemma~\ref{lem:conenorm}
	gives
	\[
		\left|q_{\bv}(\tau)\right|
		\le\int_0^\tau\left|X^\theta_{\bv(\theta)}\big(q_{\bv}(\theta)\big)\right|d\theta
		\le L_0\|\bv\|_{L^1([0,t],\R^d)}\le2L_0t<\delta ,
	\]
	where the last inequality is Lemma~\ref{lem:adapted}(v). Since the bound is strictly
	smaller than $\delta$, the solution cannot reach the boundary of that ball, and a
	standard continuation argument shows that it exists on the whole interval $[0,t]$.
\end{proof}

\begin{lemma}\label{lem:defect}
	There exists $C_Q>0$, independent of $t$, such that for every $t\in(0,t_0]$ and every
	variation $\bv\in L^2([0,t],\R^d)$ satisfying \eqref{eq:conepointwise} one has
	\begin{equation}\label{eq:defectest}
		\left|G^t_{q(t)}(\bv)-d_0G^t_{q(t)}(\bv)\right|\le C_Q\,t\,\|\bV\|_{\mathcal H} .
	\end{equation}
\end{lemma}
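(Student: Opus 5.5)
The estimate is a first-order Taylor bound for the solution $q_{\bv}$ of \eqref{eq:perturbedode}. The nonlinearity is paid for with the admissible-set inequality \eqref{eq:conenorm}, which turns a quadratic quantity in $\bv$ into a linear one in $\|\bV\|_{\mathcal H}$. In the coordinates of Remark~\ref{rmk:chart}(a) we have $G^t_{q(t)}(\bv)=q_{\bv}(t)=\int_0^tX^\tau_{\bv(\tau)}(q_{\bv}(\tau))\,d\tau$, while by \eqref{eq:fdif} $d_0G^t_{q(t)}(\bv)=\int_0^tX^\tau_{\bv(\tau)}(0)\,d\tau$, because $q(t)$ is the origin. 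Subtracting gives
\[
	G^t_{q(t)}(\bv)-d_0G^t_{q(t)}(\bv)=\int_0^t\Big(X^\tau_{\bv(\tau)}\big(q_{\bv}(\tau)\big)-X^\tau_{\bv(\tau)}(0)\Big)d\tau .
\]
By Lemma~\ref{lem:localization} the whole segment from $0$ to $q_{\bv}(\tau)$ lies in the ball of radius $\delta$, hence in the translated set $\mathcal B$. Lemma~\ref{lem:adapted}(iv) then gives a uniform Lipschitz constant $L_1$ for $x\mapsto X^\tau_i(x)$, so the integrand is bounded by $\sqrt d\,L_1|\bv(\tau)|\,|q_{\bv}(\tau)|$.

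The main step is to bound $|q_{\bv}(\tau)|$ by the weak norm rather than by $\|\bv\|_{L^1}$ alone. A crude bound $|q_{\bv}(\tau)|\le L_0\|\bv\|_{L^1}\le L_0\sqrt t\,\|\bv\|_{L^2}$ yields
\[
	|G-d_0G|\le C\sqrt t\,\|\bv\|_{L^2}\,\|\bv\|_{L^1}\le C t\,\|\bv\|_{L^2}^2 .
\]
Then \eqref{eq:conenorm}, namely $\|\bv\|_{L^2}^2\le2\|\bV\|_{\mathcal H}$, gives exactly $C_Q\,t\,\|\bV\|_{\mathcal H}$ with $C_Q=2\sqrt d\,L_1L_0$ (up to the $\sqrt d$ conventions in $L_0$). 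So the whole argument reduces to one Lipschitz estimate, the localization lemma, and Cauchy--Schwarz on $\|\bv\|_{L^1}^2\le t\|\bv\|_{L^2}^2$, applied as $\int_0^t|\bv|\,\|\bv\|_{L^1}=\|\bv\|_{L^1}^2$. There is no need for a Gronwall step, since $|q_{\bv}(\tau)|\le L_0\int_0^\tau|\bv|$ follows directly from the sup bound $L_0$ on the fields over $\mathcal B$.

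I expect the only delicate point to be checking that all constants are independent of $t$ and of the variation. This holds because $L_0$, $L_1$ and $\delta$ are suprema of the kind in Lemma~\ref{lem:adapted}(iv)--(v), and Lemma~\ref{lem:localization} ensures we never leave $\mathcal B$. A second point to verify is that $d_0G^t_{q(t)}(\bv)$ really is the coordinate vector $\int_0^tX^\tau_{\bv(\tau)}(0)\,d\tau$ in the chart $\kappa_t$. This is immediate from \eqref{eq:fdif} and \eqref{eq:SLCEndpoint}, since the tangent vector $q(t)\circ X$ has coordinates $X(0)$.
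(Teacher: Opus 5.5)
Your proof is correct and follows the paper's argument. The paper writes the defect as the $N=1$ remainder of the Volterra expansion \eqref{eq:Volterra_expansion} applied to the linear coordinate functions, which for fixed $\tau_1$ is exactly your difference $X^{\tau_1}_{\bv(\tau_1)}(q_{\bv}(\tau_1))-X^{\tau_1}_{\bv(\tau_1)}(0)$; it bounds this by a uniform constant times $\|\bv\|_{L^1}^2$ and then concludes with Cauchy--Schwarz and \eqref{eq:conenorm}, just as you do, so your Lipschitz-plus-localization estimate is the ODE-level rendering of the same bound.
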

\begin{proof}
	Let $x^1,\dots,x^n$ denote the coordinate functions of Remark~\ref{rmk:chart}(a),
	centred at $q(t)$. For a fixed $m\in\{1,\dots,n\}$, the Volterra expansion
	\eqref{eq:Volterra_expansion} with $N=1$ yields
	\[
		x^m\big(q_{\bv}(t)\big)
		=\int_0^t\Big(0\circ X^{\tau_1}_{\bv(\tau_1)}\Big)x^m\,d\tau_1
		+\int_{\Sigma_2(t)}\Big(q_{\bv}(\tau_2)\circ X^{\tau_2}_{\bv(\tau_2)}
		\circ X^{\tau_1}_{\bv(\tau_1)}\Big)x^m\,d\tau_2d\tau_1 .
	\]
	Since the coordinate functions are linear, the first summand is precisely the $m$-th
	component of $d_0G^t_{q(t)}(\bv)$, and the second one is therefore the $m$-th component
	of the defect $G^t_{q(t)}(\bv)-d_0G^t_{q(t)}(\bv)$. To estimate it, set
	\[
		\Lambda_2:=\max_{m,i,j}\ \sup_{[0,t_0]^2\times\mathcal B}
		\left|\Big(X^{\tau_2}_jX^{\tau_1}_ix^m\Big)(x)\right| ,
	\]
	which is finite and independent of $t$ by Lemma~\ref{lem:adapted}(iv); recall that
	$q_{\bv}(\tau_2)\in\mathcal B$ by Lemma~\ref{lem:localization}. Enlarging the
	simplex $\Sigma_2(t)$ to the square $[0,t]^2$, which is legitimate because the
	integrand is bounded in modulus by a non-negative function, we get
	\[
		\left|\left(G^t_{q(t)}(\bv)-d_0G^t_{q(t)}(\bv)\right)^m\right|
		\le\Lambda_2\,d^2\,\left\|\bv\right\|_{L^1([0,t],\R^d)}^2 .
	\]
	By the Cauchy--Schwarz inequality and Lemma~\ref{lem:conenorm},
	$\|\bv\|_{L^1}^2\le t\|\bv\|_{L^2}^2\le2t\|\bV\|_{\mathcal H}$, so that
	\eqref{eq:defectest} holds with $C_Q:=2\sqrt n\,\Lambda_2\,d^2$, after summing the $n$
	components.
\end{proof}

\begin{remark}\label{rmk:defectnonhomog}
	Estimate \eqref{eq:defectest} is, once more, non-homogeneous: the left-hand side -- the
	quadratic defect of the first differential -- is a genuinely quadratic quantity
	in $\bv$, while the right-hand side is linear in $\|\bV\|_{\mathcal H}$. This is not a
	weakness but the whole point. The defect will be compared, in
	Proposition~\ref{prop:perturbedKer}, with the boundary values $V_i(t)$, which are
	homogeneous of degree one in the variation; a homogeneous quadratic bound would be
	useless there for large variations, whereas the small prefactor $t$ makes the
	comparison effective uniformly on the whole admissible set. As in Remark~\ref{rmk:nonhomog}, the
	gain lies in the prefactor, not in the exponent.
\end{remark}

\begin{proposition}\label{prop:perturbedKer}
	There exists $C_0>0$, independent of $t\in(0,t_0]$, such that every variation
	$\bv\in L^2([0,t],\R^d)$ satisfying \eqref{eq:conepointwise} obeys, for
	$i=1,\dots,d$,
	\begin{equation}\label{eq:perturbedkerest}
		|V_i(t)|\le\sqrt{C_0}\,\|V\|_{L^2([0,t],\R^{d-1})}
		+C_\eta\left|G^t_{q(t)}(\bv)\right|
		+C_\eta C_Q\,t\,\|\bV\|_{\mathcal H}.
	\end{equation}
\end{proposition}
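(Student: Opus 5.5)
We will use the coframe $\eta_i(t)$ of Lemma~\ref{lem:adapted}(ii) to extract $V_i(t)$ from the first differential. Two facts drive the argument: $d_0G^t_{q(t)}(\bv)$ is linear and is close to $G^t_{q(t)}(\bv)$ by Lemma~\ref{lem:defect}, and the first differential can be rewritten through the primitive $\bV$. By \eqref{eq:fdif}, in the adapted frame,
\[
	d_0G^t_{q(t)}(\bv)=\int_0^t X^\tau_{\bv(\tau)}(q(t))\,d\tau .
\]
Integrating by parts with $X^\tau_{\bv(\tau)}=\frac{d}{d\tau}X^\tau_{\bV(\tau)}-\dot X^\tau_{\bV(\tau)}$, and using $\bV(0)=0$ and $X^t_i=X_i$, we expect
\[
	d_0G^t_{q(t)}(\bv)=X_{\bV(t)}(q(t))-\int_0^t\dot X^\tau_{V(\tau)}(q(t))\,d\tau ,
\]
where \eqref{eq:X1_identity} has removed $V_1$ from the integral.

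Pairing with $\eta_i(t)$ and using \eqref{eq:coframe} gives
\[
	V_i(t)=\big\langle\eta_i(t),d_0G^t_{q(t)}(\bv)\big\rangle
	+\int_0^t\big\langle\eta_i(t),\dot X^\tau_{V(\tau)}(q(t))\big\rangle\,d\tau .
\]
We bound the three contributions separately.

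The first term is split as $d_0G=G-(G-d_0G)$. This yields the contribution $C_\eta|G^t_{q(t)}(\bv)|$, plus $C_\eta C_Q\,t\,\|\bV\|_{\mathcal H}$ by Lemma~\ref{lem:defect}. Here $|\eta_i|$ is measured in the Euclidean norm of the chart, as in \eqref{eq:coframe}.

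For the integral term, $\dot X^\tau_j=[X_1,X^\tau_j]$ evaluated at $q(t)$ is continuous on $[0,t_0]^2$. By Lemma~\ref{lem:adapted}(iv), $|\langle\eta_i(t),\dot X^\tau_{V(\tau)}(q(t))\rangle|\le K_1|V(\tau)|$ with $K_1$ independent of $t$. Cauchy--Schwarz with $t\le1$ then bounds the integral by $K_1\sqrt t\,\|V\|_{L^2}\le K_1\|V\|_{L^2}$. Setting $C_0:=K_1^2$ gives \eqref{eq:perturbedkerest} for every $i=1,\dots,d$, including $i=1$.

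The delicate step is justifying the integration by parts for $L^2$ controls. This is routine: $\tau\mapsto X^\tau_j(q(t))$ is smooth and $\bV$ is absolutely continuous.

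One must also check that only the transverse part $V$ appears in the remainder. This is exactly \eqref{eq:X1_identity}, and it is why the bound involves $\|V\|_{L^2}$ rather than $\|\bV\|_{L^2}$.

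The uniformity of the constants in $t$ follows from Remark~\ref{rmk:chart}(c).
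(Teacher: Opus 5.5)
Your proposal is correct and follows the paper's proof essentially step for step. You integrate by parts in \eqref{eq:fdif}, use \eqref{eq:X1_identity} to drop $V_1$ from the integral, pair with the coframe $\eta_i(t)$, and bound $\langle\eta_i(t),d_0G^t_{q(t)}(\bv)\rangle$ through Lemma~\ref{lem:defect}. The only cosmetic difference is the constant: the paper defines $C_0$ directly as $\sup_{t}\max_i\int_0^t\sum_{j\ge2}|\langle\eta_i(t),\dot X^\tau_j(q(t))\rangle|^2\,d\tau$, whereas you use a pointwise bound $K_1$ together with $\sqrt t\le1$, which is equivalent.
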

\begin{proof}
	For $i=1,\dots,d$ define
	$C_i(t):=\int_0^t\sum_{j=2}^d\left|\left\langle\eta_i(t),\dot X_j^\tau(q(t))
	\right\rangle\right|^2d\tau$. The map
	$(t,\tau)\mapsto\left\langle\eta_i(t),\dot X_j^\tau(q(t))\right\rangle$ is continuous on
	the compact triangle $\{(t,\tau)\mid0\le\tau\le t\le t_0\}$: indeed $t\mapsto q(t)$ and
	$t\mapsto\eta_i(t)$ are continuous, while the transported fields and their time
	derivatives depend smoothly on the relevant variables by Lemma~\ref{lem:adapted}(iv).
	Therefore
	\begin{equation}\label{eq:C0-perturbedker}
		C_0:=\sup_{0\le t\le t_0}\max_{1\le i\le d}C_i(t)<\infty ,
	\end{equation}
	and $C_0$ depends only on the reference extremal, the adapted frame, the coframe and
	$t_0$, neither on $\bv$ nor on $t$.

	Integrating by parts in \eqref{eq:fdif} with respect to $\tau$ we obtain
	\begin{equation}\label{eq:first-differential-boundary}
		\begin{aligned}
			d_0G^t_{q(t)}(\bv)&=q(t)\circ\int_0^tX^\tau_{\bv(\tau)}\,d\tau
			=q(t)\circ X^t_{\bV(t)}-q(t)\circ\int_0^t\dot X^\tau_{\bV(\tau)}\,d\tau\\
			&=V_1(t)X_1(q(t))+\sum_{j=2}^dV_j(t)X_j(q(t))
			-\int_0^t\dot X^\tau_{V(\tau)}(q(t))\,d\tau ,
		\end{aligned}
	\end{equation}
	where we used $\bV(0)=0$, $X_i^t=X_i$ for every $i$, and \eqref{eq:X1_identity} to
	cancel $\dot X_1^\tau$ inside the integral. Fix $i\in\{1,\dots,d\}$; evaluating
	\eqref{eq:first-differential-boundary} against $\eta_i(t)$ and using \eqref{eq:coframe},
	\begin{equation}\label{eq:Vi-perturbedker}
		V_i(t)=\int_0^t\sum_{j=2}^dV_j(\tau)
		\left\langle\eta_i(t),\dot X_j^\tau(q(t))\right\rangle d\tau
		+\left\langle\eta_i(t),d_0G^t_{q(t)}(\bv)\right\rangle .
	\end{equation}
	The Cauchy--Schwarz inequality and \eqref{eq:C0-perturbedker} bound the first summand
	by $\sqrt{C_0}\,\|V\|_{L^2([0,t],\R^{d-1})}$. As for the second one,
	Lemma~\ref{lem:defect} and the triangle inequality give
	\begin{equation}\label{eq:dG-bound-perturbedker}
		\left|d_0G^t_{q(t)}(\bv)\right|\le\left|G^t_{q(t)}(\bv)\right|
		+C_Q\,t\,\|\bV\|_{\mathcal H} ,
	\end{equation}
	and $|\eta_i(t)|\le C_\eta$. Combining the two estimates yields
	\eqref{eq:perturbedkerest}.
\end{proof}

\begin{remark}
    Proposition~\ref{prop:perturbedKer} is the point at which the end-point constraint enters the analysis in a quantitative way. Indeed, the strong Legendre condition yields coercivity only on the bulk quantity $\|V\|_{L^2}$, while the reduced Hessian depends on the stronger pair $(V(t),V)$ and therefore on the boundary value $V(t)$ as well. Estimate~\eqref{eq:perturbedkerest} recovers the boundary values $V_i(t)$ from the end-point defect $G_{q(t)}^t(\bv)$, showing that the latter controls the missing component of the norm. In particular, for exact competitors satisfying $G_{q(t)}^t(\bv)=0$, the boundary term $V(t)$ becomes comparable to $\|V\|_{L^2}$ for sufficiently small $t$. This observation is what ultimately allows the Legendre coercivity to be upgraded from $\|V\|_{L^2}$ to the full norm $\|\bV\|_{\mathcal H}$ in Proposition~\ref{prop:perturbednorm} and Theorem~\ref{thm:perturbedcoercivity}.
\end{remark}

\subsection{Coercivity of the Hessian}
\begin{proposition}\label{prop:perturbednorm}
	There exist $t_1\in(0,t_0]$ and constants $C_1,C_2>0$, all independent of $t$, such
	that for every $t\in(0,t_1]$ and every $\bv$ satisfying \eqref{eq:conepointwise},
	\begin{equation}\label{eq:perturbednormequiv}
		\left\|\bV\right\|_{\mathcal H}^2
		\le C_1^2\,\|V\|_{L^2([0,t],\R^{d-1})}^2
		+C_2^2\,\left|G^t_{q(t)}(\bv)\right|^2 .
	\end{equation}
\end{proposition}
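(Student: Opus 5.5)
The plan is to bound each piece of $\|\bV\|_{\mathcal H}^2$ separately, using Proposition~\ref{prop:perturbedKer} for the boundary values and the sign constraint \eqref{eq:cone} for the first component, and then to absorb the small multiple of $\|\bV\|_{\mathcal H}^2$ that the defect estimate produces. By \eqref{eq:Hnorm},
\[
	\|\bV\|_{\mathcal H}^2=\sum_{i=1}^d|V_i(t)|^2+\|V_1\|_{L^2([0,t])}^2+\|V\|_{L^2([0,t],\R^{d-1})}^2 .
\]
The transverse bulk term $\|V\|_{L^2}^2$ already appears on the right-hand side of \eqref{eq:perturbednormequiv}. What remains is to control the $d$ boundary values $V_i(t)$ and the bulk term of the first component, $\|V_1\|_{L^2}$. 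Neither of these is seen by the Legendre form.

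For $V_1$ I will use the admissible set directly. By \eqref{eq:cone}, $v_1\le0$ a.e., so $V_1$ is nonincreasing with $V_1(0)=0$. Hence $|V_1(\tau)|\le|V_1(t)|$ for $\tau\in[0,t]$, and
\[
	\|V_1\|_{L^2}^2\le t\,|V_1(t)|^2\le|V_1(t)|^2 .
\]
This reduces everything to the boundary values: $\|\bV\|_{\mathcal H}^2\le2\sum_i|V_i(t)|^2+\|V\|_{L^2}^2$.

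For the boundary values I will square \eqref{eq:perturbedkerest} using $(x+y+z)^2\le3(x^2+y^2+z^2)$, which gives
\[
	|V_i(t)|^2\le3C_0\|V\|_{L^2}^2+3C_\eta^2|G^t_{q(t)}(\bv)|^2+3C_\eta^2C_Q^2t^2\|\bV\|_{\mathcal H}^2 .
\]
Summing over $i=1,\dots,d$ and inserting into the previous bound yields
\[
	\|\bV\|_{\mathcal H}^2\le(1+6dC_0)\|V\|_{L^2}^2+6dC_\eta^2|G^t_{q(t)}(\bv)|^2+6dC_\eta^2C_Q^2\,t^2\,\|\bV\|_{\mathcal H}^2 .
\]

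The only delicate step is the absorption of the last term, which is the self-referential term $C_\eta C_Q t\|\bV\|_{\mathcal H}$ inherited from the defect Lemma~\ref{lem:defect}. I choose $t_1\le t_0$ with $6dC_\eta^2C_Q^2t_1^2\le\frac12$. This is legitimate because $\|\bV\|_{\mathcal H}$ is finite: $\bv\in L^2$, so $\bV\in H^1$. Moving the term to the left and multiplying by $2$ gives \eqref{eq:perturbednormequiv} with
\[
	C_1^2=2(1+6dC_0),\qquad C_2^2=12dC_\eta^2 .
\]
All the constants involved, and $t_1$, depend only on $C_0$, $C_\eta$, $C_Q$ and $d$, so they are independent of $t$ and of $\bv$, as required.
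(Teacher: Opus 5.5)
Your proposal is correct and follows essentially the same route as the paper: you use the monotonicity of $V_1$ from \eqref{eq:cone} to reduce $\|V_1\|_{L^2}$ to $|V_1(t)|$, square \eqref{eq:perturbedkerest}, and absorb the $t^2\|\bV\|_{\mathcal H}^2$ term by taking $t$ small. The only difference is bookkeeping: you sum the boundary values and obtain $C_1^2=2(1+6dC_0)$, $C_2^2=12dC_\eta^2$, whereas the paper bounds them by $(d+1)\max_i|V_i(t)|^2$, which gives $A=3(d+1)$.
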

\begin{proof}
	Set $N:=\|\bV\|_{\mathcal H}$, $L:=\|V\|_{L^2([0,t],\R^{d-1})}$,
	$\theta:=C_\eta C_Q\,t$, $D:=\left|G^t_{q(t)}(\bv)\right|$ and $\beta:=C_\eta D$, so
	that \eqref{eq:perturbedkerest} reads $|V_i(t)|\le\sqrt{C_0}\,L+\theta N+\beta$ for
	$i=1,\dots,d$. Squaring and using $(x+y+z)^2\le3(x^2+y^2+z^2)$,
	\begin{equation}\label{eq:squaredendpoint}
		|V_i(t)|^2\le3\left(C_0L^2+\theta^2N^2+\beta^2\right),\qquad i=1,\dots,d .
	\end{equation}
	As in Lemma~\ref{lem:conenorm}, constraint \eqref{eq:cone} gives $\dot V_1\le0$ a.e.,
	so that $V_1$ is non-increasing with $V_1(0)=0$, whence $|V_1(\tau)|\le|V_1(t)|$ for
	every $\tau\in[0,t]$ and $\|V_1\|_{L^2([0,t])}^2\le t\,|V_1(t)|^2\le|V_1(t)|^2$, the
	last inequality because $t\le1$. Summing,
	\[
		N^2=\sum_{i=1}^d|V_i(t)|^2+\|V_1\|_{L^2}^2+L^2
		\le(d+1)\max_i|V_i(t)|^2+L^2 ,
	\]
	and by \eqref{eq:squaredendpoint}, with $A:=3(d+1)$,
	$N^2\le A\left(C_0L^2+\theta^2N^2+\beta^2\right)+L^2$. Choose $t_1\in(0,t_0]$ so small
	that $A\theta^2=AC_\eta^2C_Q^2t^2\le\frac12$ for every $t\in(0,t_1]$; absorbing
	$A\theta^2N^2$ into the left-hand side we obtain
	$N^2\le2\left(AC_0+1\right)L^2+2A\beta^2$, which is \eqref{eq:perturbednormequiv} with
	$C_1:=\sqrt{2(AC_0+1)}$ and $C_2:=C_\eta\sqrt{2A}$.
\end{proof}

\begin{lemma}[Transport of the Legendre form]\label{lem:transport}
	For every $\tau\in[0,t]$ and every $w\in\R^{d-1}$,
	\[
		\left\langle\lambda(t),\left[\dot X^{\tau}_{w},X^{\tau}_{w}\right](q(t))
		\right\rangle
		=\left\langle\lambda(\tau),\left[\dot X_{w},X_{w}\right](q(\tau))\right\rangle
		=Q^{\tau}(w).
	\]
\end{lemma}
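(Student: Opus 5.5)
The identity follows from the fact that pairing $\lambda(t)$ with pushed-forward fields at $q(t)$ is the same as pairing $\lambda(\tau)$ with the original fields at $q(\tau)$. This is the relation already used after \eqref{eq:Goh_section}, now applied to the adapted frame, in which $u\equiv e_1$ on $[0,t_0]$.

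The first step is the transport identity for a general field. Since $u(s)=e_1$, the reference flow on $[\tau,t]$ is $e^{(t-\tau)X_1}$, so $q(t)=q(\tau)\circ e^{(t-\tau)X_1}$. By \eqref{eq:covectcurve}, $\lambda(\tau)$ is the pullback of $\lambda(t)$ under this diffeomorphism. Hence, for every smooth vector field $Y$,
\[
\left\langle\lambda(t),\big(e^{(t-\tau)X_1}_*Y\big)(q(t))\right\rangle=\left\langle\lambda(\tau),Y(q(\tau))\right\rangle .
\]
This is exactly the computation behind \eqref{eq:first_differential_composition}. To confirm it, I would differentiate both sides in $\tau$ using \eqref{eq:derpairing} and $\frac{d}{ds}(e^{sX_1})_*Y=-[X_1,(e^{sX_1})_*Y]$, and note that the two sides agree at $\tau=t$.

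The second step is to show that pushforward commutes with brackets and with the $\tau$-derivative. Pushforward by a diffeomorphism is a Lie algebra homomorphism, so $[\dot X^\tau_w,X^\tau_w]=[[X_1,X^\tau_w],X^\tau_w]$. Since $X^\tau_w=e^{(t-\tau)X_1}_*X_w$ and $X_1=e^{(t-\tau)X_1}_*X_1$ ($X_1$ is invariant under its own flow), this equals $e^{(t-\tau)X_1}_*[[X_1,X_w],X_w]$. Here $w\in\R^{d-1}$ enters linearly with constant coefficients, so $X_w$ is a genuine vector field and no derivatives of coefficients appear. Applying the first step with $Y=[[X_1,X_w],X_w]$ gives
\[
\left\langle\lambda(\tau),[[X_1,X_w],X_w](q(\tau))\right\rangle .
\]

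The third step identifies this last quantity with $Q^\tau(w)$, using \eqref{eq:Legcond} with $u(\tau)=e_1$, i.e. $X_{u(\tau)}=X_1$. The frame invariance needed here is supplied by Lemma~\ref{lem:frameinv}. The same argument with $Y=[X_1,X_j]$ also justifies the claim used earlier that $\langle\lambda(t),\dot X^\tau_j(q(t))\rangle=\langle\lambda(\tau),[X_1,X_j](q(\tau))\rangle$.

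The only delicate point is bookkeeping of orientation and sign conventions in chronological notation: whether $\lambda(\tau)$ is the pullback or the pushforward of $\lambda(t)$, and the sign in $\dot X^\tau=[X_1,X^\tau]$. I would settle these by the derivative check in the first step rather than by formal manipulation.
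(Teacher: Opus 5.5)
Your proposal is correct and follows essentially the paper's proof: $e^{(t-\tau)X_1}_*$ is a Lie algebra homomorphism fixing $X_1$, so $[\dot X^\tau_w,X^\tau_w]=e^{(t-\tau)X_1}_*[[X_1,X_w],X_w]$, and $\lambda(\tau)=(e^{(t-\tau)X_1})^*\lambda(t)$ turns the pairing at $q(t)$ into $\langle\lambda(\tau),[[X_1,X_w],X_w](q(\tau))\rangle=Q^\tau(w)$. You also make explicit the invariance $e^{(t-\tau)X_1}_*X_1=X_1$, which the paper uses tacitly. One caveat: the pullback definition alone already settles the signs, while the derivative check you sketch would, as phrased, only reduce the identity for $Y$ to the identity for $[X_1,Y]$. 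It is not needed, but if you want it to be conclusive, differentiate instead the single function $\sigma\mapsto\langle\lambda(\sigma),(e^{(\sigma-\tau)X_1}_*Y)(q(\sigma))\rangle$ on $[\tau,t]$, whose derivative vanishes.
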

\begin{proof}
	By definition $X^\tau=e^{(t-\tau)X_1}_*X$, and the pushforward by a diffeomorphism is
	an isomorphism of Lie algebras, so that
	$\left[\dot X^\tau_w,X^\tau_w\right]=e^{(t-\tau)X_1}_*\left[\dot X_w,X_w\right]$.
	Evaluating at $q(t)=e^{(t-\tau)X_1}(q(\tau))$ and recalling \eqref{eq:covectcurve},
	which gives $\lambda(\tau)=\big(e^{(t-\tau)X_1}\big)^*\lambda(t)$, the claim follows
	from the definition of pullback.
\end{proof}

\begin{theorem}\label{thm:perturbedcoercivity}
	Assume the strong Legendre condition \eqref{eq:slg1} along the abnormal reference
	trajectory, with constant $c>0$. Then there exist $t_2\in(0,t_1]$ and constants
	$\gamma_*>0$, $C_3>0$, all independent of $t$, such that for every $t\in(0,t_2]$ and
	every variation $\bv$ satisfying \eqref{eq:conepointwise},
	\begin{equation}\label{eq:perturbedcoercivity}
		\lambda(t)\widehat{H^t}\big(V(t),V\big)
		\ \ge\ \gamma_*\left\|\bV\right\|_{\mathcal H}^2
		-C_3\left|G^t_{q(t)}(\bv)\right|^2 .
	\end{equation}
\end{theorem}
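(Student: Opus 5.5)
The plan is to split $\lambda(t)\widehat{H^t}(V(t),V)$ from \eqref{eq:extensHessian} into a principal Legendre part and a perturbative remainder. The principal part is the first summand. By Lemma~\ref{lem:transport} and the strong Legendre condition \eqref{eq:slg1} it equals
\[
\int_0^tQ^\tau\big(V(\tau)\big)\,d\tau\ \ge\ c\,\|V\|_{L^2([0,t],\R^{d-1})}^2 .
\]
The second summand splits into a boundary term and a double-integral term:
\[
\int_0^t\big\langle\lambda(t),[X_{V(t)},\dot X^\tau_{V(\tau)}](q(t))\big\rangle\,d\tau
\quad\text{and}\quad
\int_0^t\Big\langle\lambda(t),\Big[\int_0^\tau\dot X^\theta_{V(\theta)}\,d\theta,\dot X^\tau_{V(\tau)}\Big](q(t))\Big\rangle\,d\tau .
\]
Both are treated as remainders carrying a small power of $t$.

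\emph{Remainder bounds.} With $K_0$ the uniform bound on the bilinear forms from Lemma~\ref{lem:adapted}(iv), as in the proof of Lemma~\ref{lem:boundedextension}, Cauchy--Schwarz gives:
\begin{itemize}
\item the boundary term is at most $K_0|V(t)|\,\|V\|_{L^1}\le K_0\sqrt t\,|V(t)|\,L$, where $L:=\|V\|_{L^2}$;
\item the double-integral term is at most $K_0\|V\|_{L^1}^2\le K_0\,t\,L^2$.
\end{itemize}
The double-integral term is harmless once $K_0t\le c/4$. For the boundary term, Young's inequality gives the bound $K_0\sqrt t\,(|V(t)|^2+L^2)/2$. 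Since $|V(t)|^2\le N^2$ with $N:=\|\bV\|_{\mathcal H}$, we get
\[
\lambda(t)\widehat{H^t}\ \ge\ \big(c-K_0t-K_0\sqrt t\big)L^2-K_0\sqrt t\,N^2 .
\]

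\emph{Upgrading to $N$.} Proposition~\ref{prop:perturbednorm} gives $L^2\ge C_1^{-2}N^2-(C_2/C_1)^2D^2$, with $D:=|G^t_{q(t)}(\bv)|$. Take $t_2\le t_1$ so small that $c-K_0t-K_0\sqrt t\ge c/2$ and $K_0\sqrt t\le c/(4C_1^2)$ for $t\le t_2$. Substituting into the previous inequality yields
\[
\lambda(t)\widehat{H^t}\ \ge\ \frac{c}{4C_1^2}\,N^2-\frac{c\,C_2^2}{2C_1^2}\,D^2 ,
\]
so $\gamma_*=c/(4C_1^2)$ and $C_3=cC_2^2/(2C_1^2)$, all independent of $t$.

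\emph{Main obstacle.} The delicate point is the boundary term, which involves $V(t)$. The Legendre condition does not control $V(t)$, and its size cannot be made small by $t$ alone. The $\sqrt t$ prefactor coming from $\|V\|_{L^1}\le\sqrt t\,\|V\|_{L^2}$ is what allows absorbing it into $N^2$. That absorption is only legitimate after Proposition~\ref{prop:perturbednorm} converts $N^2$ back into $L^2$ at the cost of the $D^2$ term, so the order of the choices matters: first fix $C_1,C_2$ at time $t_1$, then shrink to $t_2$.

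A minor check is that $\widehat{H^t}$ is applied to the genuine primitive, which lies in $H^1$. The formula \eqref{eq:hessGnoV1} therefore holds directly, and the extension Lemma~\ref{lem:boundedextension} is only needed for consistency.
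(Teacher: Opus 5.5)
Your proof is correct. It differs from the paper's only in how you treat the boundary term $K_0\sqrt t\,|V(t)|\,L$.

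\textbf{The paper's route.} The paper first bounds $|V(t)|\le c_4L+c_5D$ by combining Propositions~\ref{prop:perturbedKer} and~\ref{prop:perturbednorm}. It absorbs the resulting $\sqrt t\,L^2$ into the Legendre term and removes the cross term $\sqrt t\,DL$ with a second Young inequality. This gives the intermediate estimate \eqref{eq:coercivityinL}, in $L$ alone, and only then does it convert $L^2$ into $N^2$.

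\textbf{Your route.} You use only the trivial bound $|V(t)|\le N$. You absorb the loss $K_0\sqrt t\,N^2$ at the very end, after Proposition~\ref{prop:perturbednorm} has turned $\frac c2L^2$ into $\frac{c}{2C_1^2}N^2-\frac{cC_2^2}{2C_1^2}D^2$. This is legitimate because $C_1$ and $C_2$ are fixed independently of $t$ before $t_2$ is chosen, as you point out.

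\textbf{Comparison.} Your argument is shorter. It needs no explicit control of $V(t)$ and no cross term, and it yields the simpler constant $C_3=cC_2^2/(2C_1^2)$. The end-point constraint still enters, but only through Proposition~\ref{prop:perturbednorm}, which is itself built on Proposition~\ref{prop:perturbedKer}. Nothing is lost either: inserting $N^2\le C_1^2L^2+C_2^2D^2$ into your $-K_0\sqrt t\,N^2$ term gives, for small $t$, an $L$-only inequality of the same form as \eqref{eq:coercivityinL}. That is the version the paper singles out in Remark~\ref{rmk:conjugate}.
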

\begin{proof}
	We keep the notation $N,L,D,\theta,\beta$ of Proposition~\ref{prop:perturbednorm} and
	decompose, following \eqref{eq:hessGnoV1} and \eqref{eq:extensHessian},
	$\lambda(t)\widehat{H^t}\big(V(t),V\big)=I_1(V)+I_2\big(V(t),V\big)$, where
	\[
		\begin{aligned}
			I_1(V)&:=\int_0^t\left\langle\lambda(t),
			\left[\dot X^{\tau}_{V(\tau)},X^{\tau}_{V(\tau)}\right](q(t))
			\right\rangle d\tau,\\
			I_2\big(V(t),V\big)&:=\int_0^t\left\langle\lambda(t),
			\left[X_{V(t)}+\int_0^{\tau}\dot X^{\theta}_{V(\theta)}\,d\theta,\,
			\dot X^{\tau}_{V(\tau)}\right](q(t))\right\rangle d\tau .
		\end{aligned}
	\]
	By Lemma~\ref{lem:transport} and the strong Legendre condition \eqref{eq:slg1},
	\[
		I_1(V)=\int_0^tQ^{\tau}\big(V(\tau)\big)\,d\tau\ge c\,L^2 ,
	\]
	an estimate which involves no constraint on $\bv$ whatsoever. As for $I_2$, let
	$K_0>0$ be a uniform bound, on $[0,t_0]^2$, for the operator norms of the bilinear forms
	$(a,b)\mapsto\langle\lambda(t),[X_a,\dot X^{\tau}_b](q(t))\rangle$ and
	$(a,b)\mapsto\langle\lambda(t),[\dot X^{\theta}_a,\dot X^{\tau}_b](q(t))\rangle$ on
	$\R^{d-1}\times\R^{d-1}$, which exists by Lemma~\ref{lem:adapted}(iv) and may be taken
	equal to the constant of Lemma~\ref{lem:boundedextension}. Splitting $I_2$ into its two summands and
	using $\|V\|_{L^1}\le\sqrt tL$ we get
	\[
		\left|I_2\big(V(t),V\big)\right|\le K_0\,|V(t)|\,\sqrt t\,L+K_0\,t\,L^2 .
	\]
	By Propositions~\ref{prop:perturbedKer} and~\ref{prop:perturbednorm} we have both
	$|V(t)|\le\sqrt d\max_{i\ge2}|V_i(t)|$ and, for $t\le t_1$, $N\le C_1L+C_2D$.
	Therefore
	\[
		|V(t)|\le\sqrt d\left(\sqrt{C_0}\,L+\theta N+\beta\right)
		\le\sqrt d\left(\sqrt{C_0}+\theta C_1\right)L
		+\sqrt d\left(\theta C_2+C_\eta\right)D=:c_4L+c_5D,
	\]
	where $c_4,c_5$ are independent of $t\in(0,t_1]$ because
	$\theta=C_\eta C_Qt\le C_\eta C_Q$. Hence
	\[
		\left|I_2\right|\le\left(K_0c_4\sqrt t+K_0t\right)L^2+K_0c_5\sqrt t\,D\,L .
	\]
	Choose $t_2\in(0,t_1]$ so small that $K_0c_4\sqrt t+K_0t\le\frac c2$ for every
	$t\in(0,t_2]$; then
	$\lambda(t)\widehat{H^t}\big(V(t),V\big)\ge\frac c2L^2-K_0c_5\,D\,L$. By the Young
	inequality $K_0c_5DL\le\frac c4L^2+\frac{(K_0c_5)^2}{c}D^2$ we obtain
	\begin{equation}\label{eq:coercivityinL}
		\lambda(t)\widehat{H^t}\big(V(t),V\big)\ \ge\ \frac c4L^2
		-\frac{(K_0c_5)^2}{c}D^2 .
	\end{equation}
	Finally, Proposition~\ref{prop:perturbednorm} gives
	$L^2\ge\big(N^2-C_2^2D^2\big)/C_1^2$, so that
	\[
		\lambda(t)\widehat{H^t}\big(V(t),V\big)\ \ge\ \frac{c}{4C_1^2}N^2
		-\left(\frac{cC_2^2}{4C_1^2}+\frac{(K_0c_5)^2}{c}\right)D^2 ,
	\]
	which is \eqref{eq:perturbedcoercivity} with $\gamma_*:=\frac{c}{4C_1^2}$ and
	$C_3:=\frac{cC_2^2}{4C_1^2}+\frac{(K_0c_5)^2}{c}$.
\end{proof}

\begin{remark}[A quantitative absence of conjugate points]\label{rmk:conjugate}
	Estimate \eqref{eq:coercivityinL} is the heart of the matter, and it deserves a
	geometric reading. The term $I_1$ is, by Lemma~\ref{lem:transport}, the integral of the
	Legendre form along the extremal: it is the only term of the second variation carrying
	a sign, and it is positive definite by hypothesis. All the remaining terms are
	length-three brackets paired with $\lambda(t)$ whose contribution carries a factor
	$\sqrt t$ or $t$, coming from H\"older's inequality on an interval of length $t$. Short
	time therefore plays here the role that the absence of conjugate points plays in the
	classical Jacobi theory: it guarantees that the indefinite part of the second variation
	has not yet had the time to overcome the definite one. What the argument does
	not provide is the sharp threshold, i.e.\ the first conjugate time.
\end{remark}

\section{Separation, rigidity and short-time length minimality}\label{sec:separation}

\subsection{The third-order expansion along the abnormal direction}
We now expand the abnormal component of the end-point map to third order. Two remarks
guide the statement. First, by Remark~\ref{rmk:notdiscarded} the second-order part of the
expansion contains, besides the reduced Hessian \eqref{eq:hessGnoV1}, the contribution of
the genuinely second-order operator in \eqref{eq:Second_order_differential}, which does
not vanish outside $\ker d_0G^t_{q(t)}$ and which we isolate as the term $S^t(\bv)$
below. Second, by Remark~\ref{rmk:nonhomog}, the third-order remainder can be made
quadratic in $\|\cdot\|_{\mathcal H}$ with a prefactor $\sqrt t$, provided exactly one
factor of the control is converted into its primitive.

\begin{lemma}[Third-order expansion]\label{lem:cubicremainder}
	There exist constants $C_R,C_S>0$, independent of $t\in(0,t_2]$, such that the
	following holds. Let $t\in(0,t_2]$ and let $\bv\in L^2([0,t],\R^d)$ satisfy the
	admissibility constraint \eqref{eq:conepointwise}. Then
	\begin{equation}\label{eq:exactexpansion}
		a_t\big(G^t_{q(t)}(\bv)\big)
		=\frac12\,\lambda(t)\widehat{H^t}\big(V(t),V\big)+S^t(\bv)+R^t(\bv),
	\end{equation}
	where $a_t=\left\langle\ell_t,\cdot\right\rangle$ is the affine abnormal coordinate of
	Lemma~\ref{lem:adapted}(iii),
	\begin{equation}\label{eq:Sterm}
		S^t(\bv):=\frac12\left(q(t)\circ W_{\bv}\circ W_{\bv}\right)a_t ,
		\qquad
		W_{\bv}:=\int_0^tX^\tau_{\bv(\tau)}\,d\tau ,
	\end{equation}
	and
	\begin{equation}\label{eq:remainderintegral}
		R^t(\bv):=\int_{\Sigma_3(t)}\Big(q_{\bv}(\tau_3)\circ X_{\bv(\tau_3)}^{\tau_3}
		\circ X_{\bv(\tau_2)}^{\tau_2}\circ X_{\bv(\tau_1)}^{\tau_1}\Big)a_t
		\,d\tau_3d\tau_2d\tau_1 .
	\end{equation}
	Moreover
	\begin{equation}\label{eq:remainderestimate}
		\left|R^t(\bv)\right|\le C_R\sqrt t\,\|\bV\|_{\mathcal H}^2,
		\qquad
		\left|S^t(\bv)\right|\le C_S\,\|\bV\|_{\mathcal H}
		\left(\left|G^t_{q(t)}(\bv)\right|+C_Q\,t\,\|\bV\|_{\mathcal H}\right).
	\end{equation}
\end{lemma}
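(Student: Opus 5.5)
The plan is to apply the Volterra expansion \eqref{eq:Volterra_expansion} with $N=2$ to the function $a_t$ at the point $q(t)$, i.e.\ at the origin of the coordinates of Remark~\ref{rmk:chart}(a); this is legitimate because $q_{\bv}$ stays in $\mathcal B$ by Lemma~\ref{lem:localization}. The remainder of that expansion is exactly $R^t(\bv)$ of \eqref{eq:remainderintegral}, with $\tau_3$ the innermost variable as in Remark~\ref{rmk:simplex}.

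\textbf{Identifying the first- and second-order terms.} Since $a_t$ is affine with $d_{q(t)}a_t=\lambda(t)$, the first-order term is $\langle\lambda(t),d_0G^t_{q(t)}(\bv)\rangle$. It vanishes because $\lambda(t)\perp\im d_0G^t_{q(t)}$. For the second-order term I would write
\[
X^{\tau_2}\circ X^{\tau_1}=\tfrac12\big[X^{\tau_2},X^{\tau_1}\big]+\tfrac12\big(X^{\tau_2}\circ X^{\tau_1}+X^{\tau_1}\circ X^{\tau_2}\big).
\]
The symmetric part, integrated over $\Sigma_2(t)$, gives half of the integral over the square, namely $\frac12\,W_{\bv}\circ W_{\bv}$; applied to $a_t$ at $q(t)$ this is $S^t(\bv)$. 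The bracket part, applied to the affine $a_t$, is $\frac12\langle\lambda(t),[\cdot,\cdot](q(t))\rangle$, i.e.\ $\frac12\mathcal Q^t(\bv)$. By \eqref{eq:hessGnoV1}, together with the Goh condition and \eqref{eq:cruc_vanishing}, this equals $\frac12\lambda(t)\widehat{H^t}(V(t),V)$. Identity \eqref{eq:hessGnoV1} was derived by integration by parts, which requires some regularity of $\bv$. I would first prove it for smooth $\bv$ and then pass to $L^2$ using Lemma~\ref{lem:boundedextension}, the continuity in $L^2$ of the iterated integral, and the density argument of Proposition~\ref{prop:completion}. This yields \eqref{eq:exactexpansion}.

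\textbf{Bound on $S^t$.} Write $S^t(\bv)=\frac12\langle\ell_t,DW_{\bv}(0)\,W_{\bv}(0)\rangle$, where $W_{\bv}(0)=d_0G^t_{q(t)}(\bv)$. Lemma~\ref{lem:defect} then gives
\[
|W_{\bv}(0)|\le|G^t_{q(t)}(\bv)|+C_Q t\|\bV\|_{\mathcal H}.
\]
For the Jacobian factor I would not use $\|\bv\|_{L^1}$, which is only of order $\|\bV\|_{\mathcal H}^{1/2}$. Instead I would integrate by parts as in \eqref{eq:first-differential-boundary}:
\[
DW_{\bv}=D X_{\bV(t)}-\int_0^t D\dot X^\tau_{V(\tau)}\,d\tau .
\]
Lemma~\ref{lem:adapted}(iv) bounds this by $C\big(|\bV(t)|+\|V\|_{L^1}\big)\le C\|\bV\|_{\mathcal H}$. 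Combining the two bounds gives the estimate on $S^t$ with $C_S$ independent of $t$.

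\textbf{Bound on $R^t$, the main obstacle.} A direct bound of $R^t$ by $\Lambda\|\bv\|_{L^1}^3$ is useless: via Lemma~\ref{lem:conenorm} it gives only $\|\bV\|_{\mathcal H}^{3/2}$. Following Remark~\ref{rmk:nonhomog}, I would convert the innermost factor into its primitive using $X^{\tau_3}_{\bv(\tau_3)}=\frac{d}{d\tau_3}X^{\tau_3}_{\bV(\tau_3)}-\dot X^{\tau_3}_{V(\tau_3)}$ and integrate by parts in $\tau_3$ over $[0,\tau_2]$. Differentiating $q_{\bv}(\tau_3)$ gives $q_{\bv}\circ X^{\tau_3}_{\bv(\tau_3)}$. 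This produces three kinds of terms, each bounded via Lemma~\ref{lem:adapted}(iv) and $|\bV(\tau)|\le|V_1(t)|+|V(\tau)|$ (the first component is monotone):
\begin{itemize}
\item[(a)] A boundary term at $\tau_3=\tau_2$, bounded by $C\|\bv\|_{L^1}\int_0^t|\bV||\bv|$, hence by $C\sqrt t\,\|\bv\|_{L^2}^2\|\bV\|_{L^2}$.
\item[(b)] A term containing $\dot X_V$, bounded by $C\|V\|_{L^1}\|\bv\|_{L^1}^2$.
\item[(c)] A quartic term, bounded by $C\|\bV\|_{L^1}\|\bv\|_{L^1}^3$.
\end{itemize}
Using $\|\bv\|_{L^2}^2\le2\|\bV\|_{\mathcal H}$, $\|\bv\|_{L^1}\le\min(2t,\sqrt t\|\bv\|_{L^2})$ and $\|\bV\|_{L^1}\le\sqrt t\|\bV\|_{\mathcal H}$, every term is at most $C\sqrt t\,\|\bV\|_{\mathcal H}^2$. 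The delicate point, which I expect to require the most care, is the bookkeeping of which factor gets the $L^1$ bound and which the $L^2$ bound, so that exactly two powers of $\|\bV\|_{\mathcal H}$ and at least one factor $\sqrt t$ survive in each term.
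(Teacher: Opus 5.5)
Your proposal is correct and follows the paper's proof essentially step by step. The shared steps are: the Volterra expansion with $N=2$ applied to the affine coordinate $a_t$; the split of the second-order term into $\frac12\lambda(t)\widehat{H^t}(V(t),V)$ and $S^t(\bv)$ (the paper reads this off from \eqref{eq:Second_order_differential}, you rederive it by symmetrization); the bound on $S^t$ via $\langle\ell_t,D_xW_{\bv}\cdot W_{\bv}\rangle$, the integrated-by-parts form of $W_{\bv}$ and Lemma~\ref{lem:defect}; and integration by parts in the innermost variable $\tau_3$ for $R^t$. There your terms (b) and (c) are exactly the two pieces $\partial_{\tau_3}c^t_{kji}$ and $D_xc^t_{kji}\cdot\dot q_{\bv}$ of the paper's $h'_{kji}$.

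There is one slip in the bookkeeping you flagged. In the quartic term (c) the factors $\bv(\tau_3)$ and $\bV(\tau_3)$ carry the same time. The modulus estimate therefore gives $C\int_0^t|\bv||\bV|\,d\tau\;\|\bv\|_{L^1}^2$, not $C\|\bV\|_{L^1}\|\bv\|_{L^1}^3$; for $\bv$ supported in $[t-\epsilon,t]$ the ratio of the two is of order $\epsilon^{-1}$. Bound it instead with $|\bv|\le2$ pointwise, as the paper does in \eqref{eq:hprime}. Alternatively, use $|\bV(\tau)|\le\|\bv\|_{L^1}$, which gives $C\|\bv\|_{L^1}^4\le4Ct^2\|\bV\|_{\mathcal H}^2$. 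Either way your conclusion $|R^t(\bv)|\le C_R\sqrt t\,\|\bV\|_{\mathcal H}^2$ is unchanged.
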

\begin{proof}
	With $L_0$ and $\delta$ as in Lemma~\ref{lem:adapted}(v) one has
	$\left|X^\tau_{\bw}(x)\right|\le L_0|\bw|$ for every $\bw\in\R^d$ and $x\in\mathcal B$,
	and by Lemma~\ref{lem:localization} the trajectory $q_{\bv}$ stays in $\mathcal B$ for
	every $\tau\in[0,t]$. The Volterra expansion~\eqref{eq:Volterra_expansion}
	with $N=2$, applied to the function $a_t$ and evaluated at $q(t)=0$, yields
	\begin{equation}\label{eq:volterra-at}
		a_t\big(G^t_{q(t)}(\bv)\big)
		=\int_0^t\Big(X_{\bv(\tau_1)}^{\tau_1}a_t\Big)(q(t))\,d\tau_1
		+\int_{\Sigma_2(t)}\Big(X_{\bv(\tau_2)}^{\tau_2}\circ X_{\bv(\tau_1)}^{\tau_1}
		a_t\Big)(q(t))\,d\tau_2d\tau_1+R^t(\bv),
	\end{equation}
	with $R^t(\bv)$ as in \eqref{eq:remainderintegral}. Because $d_{q(t)}a_t=\lambda(t)$
	and $a_t$ is affine, the first term equals
	$\left\langle\lambda(t),d_0G^t_{q(t)}(\bv)\right\rangle$, which vanishes by
	abnormality. The second term is $\frac12\big(d_0^2G^t_{q(t)}(\bv)\big)a_t$, and by
	\eqref{eq:Second_order_differential} it splits into the bracket part and the
	second-order part:
	\[
		\frac12\int_{\Sigma_2(t)}\left\langle\lambda(t),
		\left[X^{\tau_2}_{\bv(\tau_2)},X^{\tau_1}_{\bv(\tau_1)}\right](q(t))\right\rangle
		d\tau_2d\tau_1
		+\frac12\left(q(t)\circ W_{\bv}\circ W_{\bv}\right)a_t .
	\]
	By \eqref{eq:cruc_vanishing}, \eqref{eq:hessGnoV1} and \eqref{eq:extensHessian}, the
	first summand is $\frac12\lambda(t)\widehat{H^t}\big(V(t),V\big)$, and the second one
	is $S^t(\bv)$. This proves \eqref{eq:exactexpansion}.  Since $a_t$ is affine, for any smooth vector field
	$W$ on $\mathcal B$ one has $\left(W\circ W\right)a_t
	=\left\langle\ell_t,D_xW\cdot W\right\rangle$. Integrating by parts as in
	\eqref{eq:first-differential-boundary} and using \eqref{eq:X1_identity},
	\[
		W_{\bv}=X^t_{\bV(t)}-\int_0^t\dot X^\tau_{V(\tau)}\,d\tau ,
	\]
	so that, by Lemma~\ref{lem:adapted}(iv), there is $\Lambda_3>0$ with
	\[
		\sup_{x\in\mathcal B}\left\|D_xW_{\bv}(x)\right\|
		\le\Lambda_3\left(|\bV(t)|+\|V\|_{L^1}\right)
		\le2\Lambda_3\,\|\bV\|_{\mathcal H},
	\]
	where we used $\|V\|_{L^1}\le\sqrt t\|V\|_{L^2}\le\|\bV\|_{\mathcal H}$. On the other
	hand $W_{\bv}(q(t))=d_0G^t_{q(t)}(\bv)$ by \eqref{eq:fdif}, whence by
	\eqref{eq:dG-bound-perturbedker}
	\[
		\left|S^t(\bv)\right|\le\frac12\,C_\ell\left\|D_xW_{\bv}\right\|
		\left|d_0G^t_{q(t)}(\bv)\right|
		\le C_S\|\bV\|_{\mathcal H}\left(\left|G^t_{q(t)}(\bv)\right|
		+C_Q\,t\,\|\bV\|_{\mathcal H}\right)
	\]
	with $C_S:=C_\ell\Lambda_3$. For $i,j,k\in\{1,\dots,d\}$ define the
	coefficients
	\[
		c_{kji}^t(\tau_3,\tau_2,\tau_1;x):=\Big(x\circ X_k^{\tau_3}\circ X_j^{\tau_2}
		\circ X_i^{\tau_1}\Big)a_t .
	\]
	Since $a_t$ is affine, its coordinate derivatives of order at least two vanish; thus
	$c^t_{kji}$ involves only $\ell_t$, the coordinate expressions of the transported
	fields, and their spatial derivatives up to order two; consequently
	$\partial_{\tau_3}c^t_{kji}$ and $D_xc^t_{kji}$ involve derivatives of the transported
	fields with respect to $x$ and $\tau$ of total order at most three. By
	Lemma~\ref{lem:adapted}(iii)--(iv) there is $\Lambda>0$ with
	\begin{equation}\label{eq:cubic-coefficient-bound}
		\left|c_{kji}^t\right|+\left|\partial_{\tau_3}c_{kji}^t\right|
		+\left|D_xc_{kji}^t\right|\le\Lambda
	\end{equation}
	for every $t\in(0,t_2]$, $x\in\mathcal B$ and $0\le\tau_3\le\tau_2\le\tau_1\le t$.
	Expanding the three perturbation fields in \eqref{eq:remainderintegral},
	\begin{equation}\label{eq:remainder-components}
		R^t(\bv)=\sum_{i,j,k=1}^d\int_{\Sigma_3(t)}
		v_k(\tau_3)v_j(\tau_2)v_i(\tau_1)\,
		c_{kji}^t\big(\tau_3,\tau_2,\tau_1;q_{\bv}(\tau_3)\big)\,d\tau_3d\tau_2d\tau_1 .
	\end{equation}
	Estimating \eqref{eq:remainder-components} directly by $\|\bv\|_{L^1}^3$ would only
	give a term of order $\|\bV\|_{\mathcal H}^{3/2}$. We therefore integrate by parts in
	$\tau_3$, the innermost variable of the simplex $0\le\tau_3\le\tau_2\le\tau_1\le t$.
	For fixed $\tau_1,\tau_2,i,j,k$ put
	$h_{kji}(\tau):=c_{kji}^t\big(\tau,\tau_2,\tau_1;q_{\bv}(\tau)\big)$; this map is
	absolutely continuous and, by \eqref{eq:perturbedode}, \eqref{eq:pointwisecone} and
	\eqref{eq:cubic-coefficient-bound},
	\begin{equation}\label{eq:hprime}
		\left|h_{kji}(\tau)\right|\le\Lambda,
		\qquad
		\left|h_{kji}'(\tau)\right|\le\left|\partial_{\tau_3}c^t_{kji}\right|
		+\left|D_xc^t_{kji}\right|\,\left|X^\tau_{\bv(\tau)}(q_{\bv}(\tau))\right|
		\le\Lambda\left(1+2L_0\right)=:\Lambda_1 ,
	\end{equation}
	uniformly in $\tau_1,\tau_2,\tau$ and in $t\in(0,t_2]$. Since $V_k(0)=0$,
	\[
		\int_0^{\tau_2}h_{kji}(\tau_3)v_k(\tau_3)\,d\tau_3
		=h_{kji}(\tau_2)V_k(\tau_2)-\int_0^{\tau_2}h_{kji}'(\tau_3)V_k(\tau_3)\,d\tau_3 ,
	\]
	so that $R^t(\bv)=A^t(\bv)+B^t(\bv)$, where
	\[
		\begin{aligned}
			A^t(\bv)&:=\sum_{i,j,k}\int_{\Sigma_2(t)}h_{kji}(\tau_2)\,V_k(\tau_2)\,
			v_j(\tau_2)\,v_i(\tau_1)\,d\tau_2d\tau_1,\\
			B^t(\bv)&:=-\sum_{i,j,k}\int_{\Sigma_3(t)}h_{kji}'(\tau_3)\,V_k(\tau_3)\,
			v_j(\tau_2)\,v_i(\tau_1)\,d\tau_3d\tau_2d\tau_1 .
		\end{aligned}
	\]
	Enlarging the simplices to the corresponding cubes and using \eqref{eq:hprime},
	\[
		\begin{aligned}
			\left|A^t(\bv)\right|&\le\Lambda d^3\int_0^t\!\!\int_0^{t}
			\left|\bV(\tau_2)\right|\left|\bv(\tau_2)\right|\left|\bv(\tau_1)\right|
			d\tau_2d\tau_1\\
			&\le\Lambda d^3\,\|\bV\|_{L^2}\|\bv\|_{L^2}\|\bv\|_{L^1}
			\le\Lambda d^3\sqrt t\,\|\bV\|_{L^2}\|\bv\|_{L^2}^2,\\
			\left|B^t(\bv)\right|&\le\Lambda_1d^3\,\|\bV\|_{L^1}\|\bv\|_{L^1}^2
			\le\Lambda_1d^3\,t^{3/2}\|\bV\|_{L^2}\|\bv\|_{L^2}^2
			\le\Lambda_1d^3\sqrt t\,\|\bV\|_{L^2}\|\bv\|_{L^2}^2 ,
		\end{aligned}
	\]
	the last inequality because $t\le1$. Finally, Lemma~\ref{lem:conenorm} gives
	$\|\bv\|_{L^2}^2\le2\|\bV\|_{\mathcal H}$, while trivially
	$\|\bV\|_{L^2}\le\|\bV\|_{\mathcal H}$; therefore
	$\left|R^t(\bv)\right|\le C_R\sqrt t\,\|\bV\|_{\mathcal H}^2$ with
	$C_R:=2(\Lambda+\Lambda_1)d^3$, which is \eqref{eq:remainderestimate}.
\end{proof}

\subsection{The separation theorem}
\begin{theorem}\label{thm:separation}
	Assume the Goh condition \eqref{eq:Goh_section} and the strong Legendre condition
	\eqref{eq:slg1} along the abnormal reference trajectory. Then there exist
	$t_*\in(0,t_2]$ and constants $\eta>0$, $C_4>0$, all independent of $t$, such that for
	every $t\in(0,t_*]$ and every variation $\bv\in L^2([0,t],\R^d)$ satisfying the
	admissibility constraint \eqref{eq:conepointwise},
	\begin{equation}\label{eq:separation}
		\left\langle\ell_t,G^t_{q(t)}(\bv)\right\rangle
		\ \ge\ \eta\left\|\bV\right\|_{\mathcal H}^2
		-C_4\left|G^t_{q(t)}(\bv)\right|^2 .
	\end{equation}
	In particular,
	\begin{equation}\label{eq:separationdist}
		C_\ell\left|G^t_{q(t)}(\bv)\right|+C_4\left|G^t_{q(t)}(\bv)\right|^2
		\ \ge\ \eta\left\|\bV\right\|_{\mathcal H}^2 .
	\end{equation}
\end{theorem}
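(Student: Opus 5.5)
The plan is to combine the exact expansion of Lemma~\ref{lem:cubicremainder} with the coercivity estimate of Theorem~\ref{thm:perturbedcoercivity}, and then absorb the error terms $S^t$ and $R^t$ using the smallness of $t$ and a Young inequality. Throughout we write $N:=\|\bV\|_{\mathcal H}$ and $D:=|G^t_{q(t)}(\bv)|$. By Remark~\ref{rmk:chart}(a) we have $a_t(G^t_{q(t)}(\bv))=\langle\ell_t,G^t_{q(t)}(\bv)\rangle$, so \eqref{eq:exactexpansion} reads
\[
	\left\langle\ell_t,G^t_{q(t)}(\bv)\right\rangle
	=\frac12\,\lambda(t)\widehat{H^t}\big(V(t),V\big)+S^t(\bv)+R^t(\bv).
\]

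For $t\le t_2$, Theorem~\ref{thm:perturbedcoercivity} bounds the first term from below by $\frac{\gamma_*}2N^2-\frac{C_3}2D^2$. By \eqref{eq:remainderestimate} the remainder satisfies $R^t(\bv)\ge -C_R\sqrt t\,N^2$. The second-order operator contributes
\[
	S^t(\bv)\ge -C_S N D-C_SC_Q\,t\,N^2 .
\]
Applying Young's inequality to the mixed term gives $C_SND\le\frac{\gamma_*}8N^2+\frac{2C_S^2}{\gamma_*}D^2$.

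Next we fix the short time. Choose $t_*\in(0,t_2]$ so small that
\[
	C_R\sqrt t+C_SC_Q\,t\le\frac{\gamma_*}8\qquad\text{for all }t\in(0,t_*].
\]
Summing the three lower bounds then yields \eqref{eq:separation} with
\[
	\eta:=\frac{\gamma_*}4,\qquad C_4:=\frac{C_3}2+\frac{2C_S^2}{\gamma_*}.
\]
All of these constants are independent of $t$ and of $\bv$, because every ingredient comes from Lemma~\ref{lem:adapted}(iv) and the earlier uniform statements.

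The second inequality \eqref{eq:separationdist} follows immediately from the first. By Lemma~\ref{lem:adapted}(iii),
\[
	\left\langle\ell_t,G^t_{q(t)}(\bv)\right\rangle\le|\ell_t|\,D\le C_\ell D .
\]
Inserting this upper bound on the left of \eqref{eq:separation} and moving $C_4D^2$ to the other side gives \eqref{eq:separationdist}.

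The main obstacle is essentially already handled by the preceding lemmas: it is ensuring that the term $S^t$, which does not vanish off $\ker d_0G^t_{q(t)}$, is bounded by $N\cdot(D+tN)$ and not merely by $N^2$. This is exactly what \eqref{eq:remainderestimate} provides, through the defect estimate of Lemma~\ref{lem:defect}. The only thing to check here is the bookkeeping: the mixed term $ND$ must be absorbed with a fixed fraction of $\gamma_*$, and $t_*$ must be chosen after the constants $C_R,C_S,C_Q,\gamma_*$ are fixed, so that no constant depends on $t$.
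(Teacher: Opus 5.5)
Your proposal is correct and follows the paper's proof essentially line by line. Like the paper, you combine the expansion of Lemma~\ref{lem:cubicremainder} with Theorem~\ref{thm:perturbedcoercivity} and absorb $C_SND$ by the same Young inequality. You then choose $t_*$ so that $C_R\sqrt t+C_SC_Q\,t\le\gamma_*/8$, which gives the same constants $\eta=\gamma_*/4$ and $C_4=\frac{C_3}{2}+\frac{2C_S^2}{\gamma_*}$, and you derive the second inequality from $\langle\ell_t,x\rangle\le C_\ell|x|$, exactly as the paper does.
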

\begin{proof}
	Write $N:=\|\bV\|_{\mathcal H}$ and $D:=\left|G^t_{q(t)}(\bv)\right|$. By
	Lemma~\ref{lem:cubicremainder} and Theorem~\ref{thm:perturbedcoercivity},
	\[
		\left\langle\ell_t,G^t_{q(t)}(\bv)\right\rangle
		=\frac12\lambda(t)\widehat{H^t}\big(V(t),V\big)+S^t(\bv)+R^t(\bv)
		\ \ge\ \frac{\gamma_*}2N^2-\frac{C_3}2D^2
		-\left|S^t(\bv)\right|-\left|R^t(\bv)\right| .
	\]
	By \eqref{eq:remainderestimate},
	$\left|R^t(\bv)\right|+\left|S^t(\bv)\right|
	\le\left(C_R\sqrt t+C_SC_Qt\right)N^2+C_SND$, and by the Young inequality
	$C_SND\le\frac{\gamma_*}8N^2+\frac{2C_S^2}{\gamma_*}D^2$. Hence
	\[
		\left\langle\ell_t,G^t_{q(t)}(\bv)\right\rangle
		\ \ge\ \left(\frac{3\gamma_*}8-C_R\sqrt t-C_SC_Qt\right)N^2
		-\left(\frac{C_3}2+\frac{2C_S^2}{\gamma_*}\right)D^2 .
	\]
	Choosing $t_*\in(0,t_2]$ so small that $C_R\sqrt t+C_SC_Qt\le\frac{\gamma_*}8$ for
	every $t\in(0,t_*]$, we obtain \eqref{eq:separation} with $\eta:=\frac{\gamma_*}4$ and
	$C_4:=\frac{C_3}2+\frac{2C_S^2}{\gamma_*}$. Inequality \eqref{eq:separationdist}
	follows from $\left\langle\ell_t,x\right\rangle\le|\ell_t|\,|x|\le C_\ell|x|$.
\end{proof}

Estimate \eqref{eq:separationdist} is the quantitative form of the separation: a non-zero
admissible variation cannot bring the end-point back to $q(t)$, and, more precisely, the
end-point displacement it produces is at least of the order of the square of the weak
norm of its primitive. In the chart, \eqref{eq:separation} says that the image of the
admissible set under the nonlinear end-point map is contained in the complement of the
open ball $\{x\in\R^n:\ a_t(x)<-C_4|x|^2\}$, whose boundary passes through $q(t)$ and is
tangent there to the hyperplane $\{a_t=0\}$. Thus the hyperplane $\{a_t=0\}$ is not, in
general, a supporting hyperplane of the image: the image is separated from $q(t)$ only up
to the quadratic correction $C_4|x|^2$, which is enough to conclude because an exact
competitor has $x=0$.

\begin{remark}[Why $\lambda(t)$ is the only available covector]\label{rmk:onlyGoh}
	The asymmetry between $\lambda(t)$ and the remaining elements of
	$\Lambda_t=\left(\im d_0G^t_{q(t)}\right)^\perp$ is not a matter of convenience: it is
	built into the hypotheses, as anticipated in Subsection~\ref{ssec:onedir}. The Goh
	condition asserts the existence of at least one abnormal lift annihilating all
	the brackets $[X_i,X_j]$; it does not assert that every element of $\Lambda_t$ does the
	same. For $\mu\in\Lambda_t$ not satisfying Goh, the reduced Hessian
	$\mu\widehat{H^t}$ is not even well defined by \eqref{eq:extensHessian}, since the
	elimination of $V_1$ rests on \eqref{eq:cruc_vanishing}. An exact competitor, on the
	other hand, satisfies $\langle\mu,G^t_{q(t)}(\bv)\rangle=0$ for every
	$\mu\in\Lambda_t$, in particular for $\lambda(t)$: a strictly positive lower bound for
	the $\lambda(t)$-component alone already produces the contradiction.
\end{remark}

\begin{corollary}[Rigidity]\label{cor:rigidity}
	Let $t\in(0,t_*]$ and let $\bv$ satisfy \eqref{eq:conepointwise} together with
	$G^t_{q(t)}(\bv)=0$, i.e.\ $F^t_q(u+\bv)=q(t)$. Then $\bv\equiv0$.
\end{corollary}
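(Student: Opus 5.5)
The plan is to apply the separation estimate \eqref{eq:separationdist} of Theorem~\ref{thm:separation} directly. Let $t\in(0,t_*]$, and let $\bv$ satisfy \eqref{eq:conepointwise} and $G^t_{q(t)}(\bv)=0$. Then the left-hand side of \eqref{eq:separationdist} vanishes, so $\eta\|\bV\|_{\mathcal H}^2\le0$. Since $\eta>0$, this gives $\|\bV\|_{\mathcal H}=0$, that is, $\bV(t)=0$ and $\bV=0$ in $L^2([0,t],\R^d)$.

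Next I would convert this statement on the primitive into one on the variation. Two routes are available, and I would record both.

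\begin{itemize}
\item[(a)] Lemma~\ref{lem:conenorm} gives $\|\bv\|_{L^2}^2\le2|V_1(t)|\le2\|\bV\|_{\mathcal H}=0$. Hence $\bv=0$ in $L^2$, which is the claim $\bv\equiv0$ as an element of $L^2([0,t],\R^d)$.
\item[(b)] Alternatively, $\bV$ is absolutely continuous and vanishes a.e., so it vanishes identically. Therefore $\bv=\dot\bV=0$ a.e.
\end{itemize}

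Route (a) is the cleaner one, since it uses exactly the non-homogeneous inequality the paper emphasises.

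I do not expect a genuine obstacle here. The only point to check is that the hypotheses of Theorem~\ref{thm:separation} hold: $t\le t_*$, the admissibility constraint \eqref{eq:conepointwise}, and the Goh and strong Legendre conditions in the adapted frame. The last of these is guaranteed by Lemma~\ref{lem:frameinv}. I would also note that the constants are independent of $\bv$, so no smallness assumption on the variation is needed. This is what makes the rigidity global on the admissible set rather than merely local.
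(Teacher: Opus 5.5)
Your proposal is correct and matches the paper's proof: with $G^t_{q(t)}(\bv)=0$ the separation estimate gives $\eta\|\bV\|_{\mathcal H}^2\le0$, hence $\bV\equiv0$ and $\bv=\dot{\bV}\equiv0$, which is your route (b). Starting from \eqref{eq:separationdist} rather than \eqref{eq:separation}, or concluding through Lemma~\ref{lem:conenorm} as in your route (a), are equally valid minor variants.
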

\begin{proof}
	The left-hand side of \eqref{eq:separation} vanishes and so does its last term, whence
	$\eta\|\bV\|_{\mathcal H}^2\le0$ and therefore $\|\bV\|_{\mathcal H}=0$. Since
	$\|\cdot\|_{\mathcal H}$ dominates the $L^2$-norm of $\bV$, we get $\bV\equiv0$ and
	thus $\bv=\dot{\bV}\equiv0$.
\end{proof}

\subsection{Short-time local length minimality}
It remains to convert Corollary~\ref{cor:rigidity} into a statement about the length
functional \eqref{eq:length}. Two points are worth a remark here.

First, the reduction to the admissible set \eqref{eq:conepointwise} was obtained through the
constant-speed reparametrization of Lemma~\ref{lem:reparam}, which identifies curves
differing by a reparametrization. Consequently the
conclusion $\bv\equiv0$ concerns the reparametrized competitor and yields that
$\gamma_{\tilde u}$ is a reparametrization of the reference curve; the natural
formulation of the result is therefore uniqueness up to reparametrization. Strictness
cannot hold on the space of parametrized controls: in the adapted frame, for every
nondecreasing absolutely continuous surjection $\varphi$ of $[0,t]$ the control
$\dot\varphi e_1$ produces the curve $\gamma_u|_{[0,t]}\circ\varphi$, which has the same
end-points and the same length as the reference arc, and $\|\dot\varphi e_1-u\|_{L^2}$ can
be arbitrarily small without $\dot\varphi$ being identically equal to $1$. Strictness
holds on the quotient by reparametrizations, and on the set of controls of constant speed.

Second, the size of the neighborhood on which the end-point map is well defined must be
quantified. Here the admissible set works in our favor: by Lemma~\ref{lem:conenorm} every
admissible variation satisfies $\|\bv\|_{L^1}\le2t$, so that competitors are automatically
small in $L^1$ for small $t$ and, by \eqref{eq:noleave}, the associated trajectories
$q_{\bv}$ never leave the ball of radius $\delta$. Equivalently, and more intrinsically, a
horizontal curve issuing from $q$ of length at most $t$ is contained in the closed
sub-Riemannian ball $\overline B(q,t)$, which is contained in $\mathcal O_1$ for $t$
small. No further smallness assumption on the competitor is required. Recall that, by the
H\"ormander condition, the sub-Riemannian distance is finite and induces the manifold
topology. Hence, since $\mathcal O_1$ is an open neighborhood of $q$, there exists
$\rho>0$ such that $B(q,\rho)\subset\mathcal O_1$, and therefore
$\overline B(q,t)\subset\mathcal O_1$ for every $t<\rho$.

\begin{theorem}[Short-time length minimality up to reparametrization]\label{thm:minimality}
	Let $u\in L^2([0,1],\R^d)$ be an arclength parametrized strictly singular control, and
	assume that the Goh condition \eqref{eq:Goh_section} and the strong Legendre condition
	\eqref{eq:strongLeg} hold along the corresponding nonzero abnormal lift
	$t\mapsto\lambda(t)$, with constant $c>0$. Then there exists $t_*\in(0,1]$ such that,
	for every $t\in(0,t_*]$, the restriction $u|_{[0,t]}$ minimizes the length
	\eqref{eq:length} among all admissible curves joining $q$ to $q(t)$: if
	$\gamma_{\tilde u}$ is an admissible curve with $F^t_q(\tilde u)=q(t)$ and
	$\ell(\gamma_{\tilde u})\le\ell(\gamma_{u^t})=t$, then $\ell(\gamma_{\tilde u})=t$ and
	$\gamma_{\tilde u}=\gamma_u|_{[0,t]}\circ\varphi$ for a nondecreasing absolutely
	continuous surjection $\varphi$ of $[0,t]$, that is, $\gamma_{\tilde u}$ coincides with
	$\gamma_u|_{[0,t]}$ up to reparametrization. If, in addition, $\tilde u$ has constant
	speed, then $\tilde u=u$. In particular $u|_{[0,t]}$ is the unique length-minimizing
	control of constant speed joining $q$ to $q(t)$.
\end{theorem}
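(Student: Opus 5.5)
The plan is to reduce Theorem~\ref{thm:minimality} to Corollary~\ref{cor:rigidity} by means of the constant-speed reparametrization of Lemma~\ref{lem:reparam}, after reconciling the frames and fixing the time threshold.

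\emph{Choice of $t_*$ and frame.} Take $t_*$ as the minimum of the $t_*$ of Theorem~\ref{thm:separation} and a number smaller than the radius $\rho$ with $B(q,\rho)\subset\mathcal O_1$. Then every competitor of length at most $t\le t_*$ stays in $\mathcal O_1$, where the adapted frame of Lemma~\ref{lem:adapted} is defined. By Remark~\ref{rmk:frame} and Lemma~\ref{lem:frameinv}, passing to this frame changes the control pointwise by an orthogonal matrix. Hence lengths, end-points, constant speed and the Goh and strong Legendre hypotheses are all preserved. It therefore suffices to argue in the adapted frame, where $u=e_1$ on $[0,t]$.

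\emph{Reduction to the admissible set.} Let $\gamma_{\tilde u}$ join $q$ to $q(t)$ with $\Len(\tilde u)\le t$. As observed before Lemma~\ref{lem:reparam}, $\Len(\tilde u)>0$. Lemma~\ref{lem:reparam} produces $\eta=\gamma_{\tilde u}\circ\varphi^\#$ with the same end-points, with $\gamma_{\tilde u}=\eta\circ\varphi$, and with control $\tilde u_\varphi$ of constant speed $\Len(\tilde u)/t\le1$. Set $\bv:=\tilde u_\varphi-e_1$. Then $|e_1+\bv|\le1$ a.e., which is exactly \eqref{eq:conepointwise}, and $G^t_{q(t)}(\bv)=F^t_q(\tilde u_\varphi)=q(t)$, so the end-point vanishes in the chart. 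Corollary~\ref{cor:rigidity} gives $\bv\equiv0$, that is, $\tilde u_\varphi=u|_{[0,t]}$ and $\eta=\gamma_u|_{[0,t]}$. Consequently $\Len(\tilde u)/t=|\tilde u_\varphi|=1$, so $\ell(\gamma_{\tilde u})=t$, and $\gamma_{\tilde u}=\gamma_u|_{[0,t]}\circ\varphi$, where $\varphi$ is a nondecreasing absolutely continuous surjection of $[0,t]$ by \eqref{eq:timepar}.

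\emph{Constant-speed case.} If $\tilde u$ has constant speed $s$, then $s=\Len(\tilde u)/t=1$ and $\varphi(\tau)=\tau$, so $\varphi^\#=\mathrm{id}$. Thus $\tilde u=\tilde u_\varphi=u$. This gives the uniqueness statement: any constant-speed minimizer has length $\le t$, hence equals $u$.

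\emph{Main obstacle.} The delicate step is bookkeeping rather than analysis: checking that the reparametrized competitor is admissible in the sense of Section~\ref{sec:separation}. Specifically, its trajectory must stay in the chart domain, which follows from Lemma~\ref{lem:localization} or from the sub-Riemannian ball argument, and the change-of-frame correspondence must map constant-speed controls to controls satisfying \eqref{eq:conepointwise}. Once these are secured, the conclusion follows directly from rigidity.
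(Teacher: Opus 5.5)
Your proposal is correct and follows the paper's proof essentially step by step. You shrink $t_*$ so that $\overline B(q,t)\subset\mathcal O_1$, pass to the adapted frame, reparametrize to constant speed via Lemma~\ref{lem:reparam} so that $\bv=\tilde u_\varphi-u$ satisfies \eqref{eq:conepointwise} and $G^t_{q(t)}(\bv)=0$, apply Corollary~\ref{cor:rigidity}, and read off $\ell(\gamma_{\tilde u})=t$, $\gamma_{\tilde u}=\gamma_u|_{[0,t]}\circ\varphi$, and $\varphi=\mathrm{id}$ in the constant-speed case, exactly as the paper does.
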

\begin{proof}
	Let $t\in(0,t_*]$, with $t_*$ as in Theorem~\ref{thm:separation}, possibly further
	reduced so that $\overline B(q,t)\subset\mathcal O_1$, and let $\gamma_{\tilde u}$ be a
	competitor as in the statement. Since $\ell(\gamma_{\tilde u})\le t$, every point of
	$\gamma_{\tilde u}$ has sub-Riemannian distance at most $t$ from $q$, so that
	$\gamma_{\tilde u}\subset\overline B(q,t)\subset\mathcal O_1\subset\mathcal O$. Hence
	$\gamma_{\tilde u}$ is represented in the adapted frame of Lemma~\ref{lem:adapted} by a
	control $\tilde u\in L^2([0,t],\R^d)$, with the same length because both frames are
	orthonormal, and the end-point map is well defined on it. As observed in
	Subsection~\ref{ssec:adapted}, $\Len(\tilde u)>0$. Apply
	Lemma~\ref{lem:reparam} and let $\tilde u_\varphi$ be the control of the curve $\eta$
	there. By the lemma
	$\left|\tilde u_\varphi(\tau)\right|=\ell(\gamma_{\tilde u})/t\le1$ for a.e.\ $\tau$,
	and the reparametrized curve has the same end-point as $\gamma_{\tilde u}$, so that
	$F^t_q(\tilde u_\varphi)=q(t)$. Writing
	$\bv:=\tilde u_\varphi-u=(v_1,v)$ in the splitting \eqref{eq:boldnotation} and
	recalling $u=(1,0)$, the pointwise bound above reads
	\[
		\big(1+v_1(\tau)\big)^2+|v(\tau)|^2=\left|\tilde u_\varphi(\tau)\right|^2\le1,
		\qquad\text{a.e. }\tau\in[0,t],
	\]
	which is exactly \eqref{eq:conepointwise}. Moreover $G^t_{q(t)}(\bv)=0$ by
	\eqref{eq:SLCEndpoint}. Corollary~\ref{cor:rigidity} then gives $\bv\equiv0$, i.e.\
	$\tilde u_\varphi=u$ almost everywhere on $[0,t]$. In particular
	$\left|\tilde u_\varphi\right|\equiv1$, whence $\ell(\gamma_{\tilde u})=t$, and
	$\gamma_{\tilde u}=\gamma_{\tilde u_\varphi}\circ\varphi
	=\gamma_u|_{[0,t]}\circ\varphi$ is a reparametrization of the reference arc. If
	$\tilde u$ has constant speed, then $\dot\varphi\equiv1$ and $\varphi$ is the identity,
	because $\varphi(0)=0$, so that $\tilde u=u$.
\end{proof}

\subsection{On the strength of the results}\label{ssec:strength}
We close by discussing what Theorems~\ref{thm:separation} and~\ref{thm:minimality} assert, and what are their current limitations.

(a) The topology. The minimality statement is not confined to a small
neighborhood of the reference control in any topology on controls: the only restriction
imposed on the competitor is that it join $q$ to $q(t)$ with length at most $t$, which
already confines it to the closed ball $\overline B(q,t)$. This is a consequence of the
fact that the admissibility constraint \eqref{eq:conepointwise} is a pointwise,
non-perturbative condition, and that all the estimates of
Sections~\ref{sec:hessian} and~\ref{sec:separation} are uniform on the admissible set
rather than asymptotic as $\bv\to0$. In particular the length of every control $\tilde u$
with $F^t_q(\tilde u)=q(t)$ is at least $t$, with equality only for reparametrizations of
$u|_{[0,t]}$; this gives the minimality of $u|_{[0,t]}$ with respect to the
$L^2$-topology on controls (equivalently, the $W^{1,2}$-topology on trajectories in the
adapted chart), which is stronger than the $L^\infty$-minimality of \cite{AS96} and than
the rank-two $L^2$ result of \cite{AS95}. Because of reparametrizations, strictness holds
on the quotient by reparametrizations and on the set of controls of constant speed, not
among parametrized controls. The conclusion is accompanied by rigidity: the competitor is
not merely no shorter, it is the same curve up to reparametrization.

(b) What is genuinely local is the time. The conclusion holds for $t\le t_*$, and
$t_*$ is determined, through the finitely many smallness requirements imposed in
Proposition~\ref{prop:perturbednorm} and in
Theorems~\ref{thm:perturbedcoercivity} and~\ref{thm:separation}, by the constants $c$,
$C_0$, $C_Q$, $C_R$, $C_S$, $C_\eta$, $C_\ell$, $K_0$, $L_0$, $\delta$, $\Lambda$,
$\Lambda_1$, $\Lambda_2$ and $\Lambda_3$, and by the choice of $t_0$ in
Lemma~\ref{lem:adapted}. All of these are suprema, over a compact neighborhood of the
reference arc, of the transported fields and of their derivatives up to order three, so
$t_*$ is in principle computable; it is
however certainly not sharp, and the argument gives no information about the analogue of
the first conjugate time (Remark~\ref{rmk:conjugate}).

(c) The hypotheses. Coercivity is assumed only at the Lebesgue points of $u$, which
is weaker than the pointwise assumption of \cite{AS95,AS96}; the passage to all times is
part of the conclusion (Proposition~\ref{prop:ucont}). No assumption is made on $n$, on
the corank of $d_0G^t_{q(t)}$, or on the growth vector of the
distribution: in rank two the hypothesis implies niceness, in higher rank it replaces it
by the maximal-rank condition \eqref{eq:onedim} (Remark~\ref{rmk:niceanalogy}). On the
other hand the hypothesis is a genuine restriction, and the interesting degenerate cases
ii) and iii) listed after Proposition~\ref{prop:essmin} -- where the Legendre form
vanishes somewhere, respectively on a set of positive measure -- remain open. There the
mechanism exploited here breaks down at its root, since $I_1$ no longer dominates $I_2$
even for short times.

(d) The mechanism, in one line. The entire argument rests on the non-homogeneous
inequality $\|\bv\|_{L^2}^2\le2\|\bV\|_{\mathcal H}$ of Lemma~\ref{lem:conenorm}, valid on
the admissible set and false on any linear subspace: it converts a cubic quantity into a
quadratic one with a small prefactor, and it is what allows the second-order term to
dominate all the others uniformly on the whole admissible set, with no smallness
requirement on the variation.

\bibliographystyle{abbrv}
\bibliography{Biblio}

\end{document}